\def\comment#1{}
\newtheorem{theorem}{Theorem}
\newtheorem{definition}[theorem]{Definition}
\newtheorem{lemma}[theorem]{Lemma}
\newtheorem{proposition}[theorem]{Proposition}
\theoremstyle{remark}
\newtheorem{remark}[theorem]{Remark}
\newtheorem{example}[theorem]{Example}
 \newcommand{\eps}{\varepsilon}
 \renewcommand{\phi}{\varphi}
\newcommand{\N}{\mathbb{N}}
\newcommand{\R}{\mathbb{R}}
\newcommand{\bes}{\begin{subequations}}
\newcommand{\ees}{\end{subequations}}
\newcommand{\eea}{\end{eqnarray}}
\renewcommand{\eps}{\varepsilon}
\renewcommand{\epsilon}{\varepsilon}
\DeclareMathOperator{\proj}{proj}
\newcommand{\fourIdx}[5]{%
\setbox1=\hbox{\ensuremath{^{#1}}}%
 \setbox2=\hbox{\ensuremath{_{#2}}}%
 \setbox5=\hbox{\ensuremath{#5}}%
 \hspace{\ifnum\wd1>\wd2\wd1\else\wd2\fi}%
 \ensuremath{\copy5^{\hspace{-\wd1}\hspace{-\wd5}#1\hspace{\wd5}#3}%
 _{\hspace{-\wd2}\hspace{-\wd5}#2\hspace{\wd5}#4}%
 }}
\numberwithin{equation}{section}
\numberwithin{theorem}{section}
\renewcommand{\subset}{\subseteq}
\renewcommand{\supset}{\supseteq}
\renewcommand{\mathrm}{}
\newcommand{\probref}[1]{{\normalfont (\nameref{#1})}}
\begin{document}

\title{Existence, duality, and cyclical monotonicity for weak transport costs}
\author{J. Backhoff-Veraguas}
\author{M. Beiglb\"ock}
\author{G. Pammer}
\thanks{University of Vienna. Oskar-Morgenstern-Platz 1, 1090 Vienna, Austria.  MB gratefully acknowledges support by FWF-grant Y00782. GP acknowledges support from the Austrian Science Fund (FWF) through grant number W 1245. All authors thank the anonymous referee for insightful comments that lead to a significant improvement of the article.}  
\begin{abstract}
    The optimal weak transport problem has recently been introduced by Gozlan
    et.\ al.\ \cite{GoRoSaTe17}. We provide general existence and duality results for these problems on arbitrary Polish spaces, as well as a necessary and sufficient optimality 
    criterion in the spirit of cyclical monotonicity. As an application we
    extend the Brenier-Strassen Theorem of Gozlan-Juillet \cite{GoJu18} to general probability measures on $\R^d$ under minimal assumptions.
    
    {\color{black}A driving idea behind our proofs is to consider the set of transport plans with a new (`adapted') topology which seems better suited for the weak transport problem and allows to carry out arguments which are close to the proofs in the classical setup.}
    
    \medskip
    
    \noindent\emph{Keywords:} Optimal Transport, cyclical monotonicity, Brenier's Theorem, Strassen's Theorem, weak transport costs, {\color{black}weak adapted topology}, duality. \\
{\color{black}\emph{Mathematics Subject Classification (2010):} 60G42, , 90C46, 58E30.}
\end{abstract}
\maketitle

\section{Introduction}
\subsection{Notation} 
This article is concerned with the optimal transport problem for weak costs, as initiated by Gozlan{\color{black}, Roberto, Samson and Tetali} in \ \cite{GoRoSaTe17}. To state it (see \eqref{eq weak transport def} below) we introduce some basic notation. 
 We write $\mathcal P(Z)$ for the set of probability measures on a Polish space $Z$ is   and equip $\mathcal P(Z)$ with the usual weak topology. %Denoting by $C_b(Z)$ the space of real-valued continuous bounded functions on $Z$, we use the probabilists terminology of `weak convergence' for the weak topology that $C_b(Z)$ induces on $\mathcal P(Z)$. 
 Throughout $X$ and $Y$ are Polish spaces, $\mu\in\mathcal P(X)$, and $\nu\in\mathcal P(Y)$. 
We write $\Pi(\mu,\nu)$ for the set of all couplings on $X\times Y$ with marginals $\mu$ and $\nu$.  Given a coupling $\pi$ on $X\times Y$ we denote a regular disintegration with respect to the first marginal by $(\pi_x)_{x\in X}$. %In this case, we endow $\mathcal P(X)$ and $\mathcal P(Y)$ with the topology of weak convergence. \\
We consider  cost functionals of the form 
$$C:X\times\mathcal P(Y)\to \mathbb R\cup\{+\infty\};$$
usually it is assumed  that $C$ is lower bounded and lower {\color{black}semicontinuous} in an appropriate sense, and that $C(x, \cdot)$ is convex. 
The weak transport problem is then defined as
\begin{align}\label{eq weak transport def}
V(\mu,\nu) := \inf_{\pi\in\Pi(\mu,\nu)}\int_X C(x,\pi_x)\mu(\mathrm dx).  
\end{align}

\subsection{Literature}
The initial works of Gozlan et al.\ \cite{GoRoSaTe17, GoRoSaSh18} 
are mainly motivated by applications to geometric inequalities. Indeed, particular costs of the form \eqref{eq weak transport def} were already considered by Marton \cite{Ma96concentration, Ma96contracting} and Talagrand \cite{Ta95, Ta96}. Further papers directly related to \cite{GoRoSaTe17} include \cite{Sh16, Sa17, Sh18, FaSh18, GoJu18}. 
Notably the weak transport problem \eqref{eq weak transport def} also yields a natural framework to investigate a number of related problems: it appears in the recursive formulation of the causal transport problem \cite{BaBeLiZa16}, in \cite{AlCoJo17, AlBoCh18, BeJu17, BaBeHuKa17} it is used to investigate  martingale optimal transport problems,  {\color{black}in \cite{BaBaBeEd19a} it is applied to prove stability of pricing and hedging in mathematical finance}, it appears in the characterization of optimal mechanism for the multiple good monopolist \cite{DaDeTz17} and motivates the investigation of linear transfers in \cite{BoGh18}.
A more classical example is given by entropy-regularized optimal transport (i.e.\ the  Schr\"odinger problem); see \cite{Le14} and the references therein.

\subsection{Main results}

We will establish analogues of three fundamental facts in optimal transport theory: existence of optimizers, duality, and characterization of optimizers through $c$-cyclical monotonicity. We make the important comment,  that these concepts (in particular existence and duality) have been previously studied for the weak transport problem. However, the results available so far may be too restrictive for certain applications.

 Our goal is to establish these results at a level of generality that mimics the framework usually considered in the optimal transport literature (i.e.\ lower bounded, lower semicontinuous cost function). We emphasize that this extension is in fact required to treat specific examples of interest, cf.\ Section \ref{sec:BrSt} below. 
 
We briefly hint at  the novel viewpoint which makes this extension possible: In a nutshell, the technicalities of the weak transport problem appear intricate and tedious since kernels $(\pi_x)_x$ are notoriously ill behaved with respect to weak convergence of measures on $\mathcal P(X\times Y)$. In the present paper we circumvent this difficulty by embedding $\mathcal P(X\times Y)$ into the bigger space $\mathcal P(X\times \mathcal P(Y))$. 
This idea is borrowed from the investigation of process distances (cf.\ \cite{PfPi12, BaBeEdPi17, BaBaBeEd19b}) and will allow us to carry out proofs that closely resemble familiar arguments from classical optimal transport.

\subsubsection{Primal Existence}
As a first contribution we will establish in Section 2 the following basic existence results.

\begin{theorem}[Existence I]\label{thm existence general} 
Assume that $C\colon X\times \mathcal P(Y)\rightarrow \mathbb{R}\cup\{+\infty\}$ is jointly lower semicontinuous, bounded from below, and convex in the second argument. Then,  the problem 
$$\inf_{\pi\in\Pi(\mu, \nu)}\int_X C(x,\pi_x)\mu(\mathrm dx),$$
admits a minimizer.
\end{theorem}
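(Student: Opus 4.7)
The plan is to lift the problem from $\Pi(\mu,\nu)\subseteq \mathcal P(X\times Y)$ to $\mathcal P(X\times \mathcal P(Y))$, apply the direct method in the lifted space where the objective is automatically lower semicontinuous, and then project back to a coupling using the convexity of $C(x,\cdot)$. Concretely, to each $\pi\in\Pi(\mu,\nu)$ I associate the measure $P^\pi\in\mathcal P(X\times \mathcal P(Y))$ obtained as the pushforward of $\mu$ under $x\mapsto(x,\pi_x)$. By construction
$$\int_X C(x,\pi_x)\,\mu(dx)=\int_{X\times \mathcal P(Y)} C(x,p)\,P^\pi(dx,dp),$$
and since $C$ is jointly lower semicontinuous and bounded below, the functional $P\mapsto \int C\,dP$ is weakly lower semicontinuous on $\mathcal P(X\times \mathcal P(Y))$ by the standard Portmanteau argument.

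Take a minimizing sequence $\pi^n\in\Pi(\mu,\nu)$ and set $P^n:=P^{\pi^n}$. The first marginal of $P^n$ equals $\mu$, and the second marginal $Q^n:=(\pi^n_\cdot)_*\mu\in\mathcal P(\mathcal P(Y))$ satisfies $\int p\,Q^n(dp)=\nu$ for every $n$. The key technical point, and what I expect to be the main obstacle, is to verify relative compactness of
$$\bigl\{Q\in\mathcal P(\mathcal P(Y)):\textstyle\int p\,Q(dp)=\nu\bigr\}$$
in the weak topology on $\mathcal P(\mathcal P(Y))$. I would prove it by a Markov--Borel--Cantelli argument: for $\varepsilon>0$, pick compacts $K_n\subseteq Y$ with $\nu(K_n^c)\leq \varepsilon/(n2^n)$ and use the Markov-type bound $Q(\{p:p(K_n^c)>1/n\})\leq n\cdot \nu(K_n^c)\leq \varepsilon/2^n$ (valid because $Q$ has barycenter $\nu$) to produce a single closed tight subset $\mathcal L\subseteq \mathcal P(Y)$ with $Q(\mathcal L)\geq 1-\varepsilon$ uniformly in all admissible $Q$.

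Granted this, Prokhorov's theorem yields a subsequence $P^n\rightharpoonup P$. Weak continuity of the barycenter map on $\mathcal P(\mathcal P(Y))$ (easily checked by testing with bounded continuous functions on $Y$) shows that the second marginal of $P$ also has barycenter $\nu$. Writing $P=\mu(dx)\otimes P_x(dp)$, I define $\bar\pi_x:=\int p\,P_x(dp)\in\mathcal P(Y)$ and $\bar\pi(dx,dy):=\mu(dx)\bar\pi_x(dy)$; the barycenter computation above then gives $\bar\pi\in\Pi(\mu,\nu)$.

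To conclude, combine three ingredients. Lower semicontinuity in the lift gives $\int C\,dP\leq \liminf_n\int C\,dP^n=\liminf_n\int C(x,\pi^n_x)\,\mu(dx)$. Jensen's inequality applied to the convex lower semicontinuous function $C(x,\cdot)$ on the convex subset $\mathcal P(Y)$ of the locally convex space of signed measures gives $C(x,\bar\pi_x)\leq \int C(x,p)\,P_x(dp)$, so integrating against $\mu$ yields $\int C(x,\bar\pi_x)\,\mu(dx)\leq \int C\,dP$. Chaining the two inequalities identifies $\bar\pi$ as a minimizer. The whole argument is structurally identical to the classical compactness proof in optimal transport, once the lift to $\mathcal P(X\times\mathcal P(Y))$ has resolved the failure of lower semicontinuity caused by the ill-behaved disintegration $\pi\mapsto \pi_x$.
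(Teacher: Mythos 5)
Your proposal is correct and follows essentially the same route as the paper: lift the problem to $\mathcal P(X\times\mathcal P(Y))$, obtain tightness of the lifted sequence from the fact that the intensities (barycenters) are fixed equal to $\nu$ (the paper's Lemma on tightness via intensities, which you reprove directly with the Markov bound), use lower semicontinuity of the lifted integral functional, and project back to a coupling via the barycenter together with Jensen's inequality for the convex map $C(x,\cdot)$. The only cosmetic difference is that the paper packages this as lower semicontinuity of $\pi\mapsto\int C(x,\pi_x)\,\mu(dx)$ plus compactness of $\Pi(\mu,\nu)$, whereas you build the minimizer directly as the intensity of the lifted limit; the underlying mechanism is identical.
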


Notably, Gozlan et.al.\ {\color{black}prove existence of minimizers} under the assumption that $\pi\mapsto \int C(x, \pi_x)\, d\mu(x)$ is continuous on the set of all transport plans with first marginal $\mu$, whereas our aim is to establish existence based on properties of the function $C$. 
We also note that Theorem \ref{thm existence general} was first established by Alibert, Bouchitt\'e, and Champion {\color{black}in} \cite{AlBoCh18} in the case where $X,Y$ are compact spaces. 

In fact the assumptions of Theorem \ref{thm existence general} may be more restrictive than they initially appear. Indeed, as the cost function defined in \eqref{GJcost} below is not lower semicontinuous with respect to weak convergence, we will need to  employ  a refined version of Theorem \ref{thm existence general} to carry out our application in Theorem \ref{thm a la GJ} below. 

Given a compatible metric $d_Y$ on the Polish space $Y$ and $t\in[1,\infty)$, we write $\mathcal P_{d_Y}^t(Y)$ for the set of probability measures $\nu\in \mathcal P(Y)$ such that  $\int d_Y(y,y_0)^t\, \nu(dy)<\infty$ for some (and then any) $y_0\in Y$ and denote the $t$-Wasserstein metric on $\mathcal P_{d_Y}^t(Y)$ by $\mathcal W_t$ (see e.g.\ \cite[Chapter 7]{Vi03}). In the sequel we make the convention that, whenever we refer to $\mathcal P_{d_Y}^t(Y)$, it is assumed that this set is equipped with the topology generated by $\mathcal W_t$. On the other hand, regarding the Polish space $X$, we fix from now on a compatible bounded metric $d_X$.

\begin{theorem}[Existence II]\label{thm existence marginals} Assume that $\nu\in  \mathcal P^t_{d_Y}(Y)$.
Let $C\colon X\times \mathcal P_{d_Y}^t(Y)\rightarrow \mathbb{R}\cup\{+\infty\}$ be jointly lower semicontinuous with respect to the product topology on $X \times \mathcal P_{d_Y}^t(Y)$, bounded from below, and convex in the second argument.
Then,  the problem 
$$\inf_{\pi\in\Pi(\mu, \nu)}\int_X C(x,\pi_x)\mu( dx),$$
admits a minimizer.
\end{theorem}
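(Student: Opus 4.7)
The plan is to mimic the proof of Theorem~\ref{thm existence general}, replacing the weak topology on $\mathcal P(Y)$ by the $\mathcal W_t$-topology on $\mathcal P^t_{d_Y}(Y)$. I would take a minimizing sequence $\pi_n\in\Pi(\mu,\nu)$ and lift each $\pi_n$ to
\[
\hat\pi_n := (\id,\, x\mapsto \pi_{n,x})_*\mu \;\in\; \mathcal P\bigl(X\times \mathcal P^t_{d_Y}(Y)\bigr),
\]
so that the functional linearises: $\int C(x,\pi_{n,x})\,\mu(dx) = \int C(x,p)\,\hat\pi_n(dx,dp)$. The space $X\times \mathcal P^t_{d_Y}(Y)$ is Polish, so Prokhorov-style arguments are available.

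The main obstacle will be tightness of $\{\hat\pi_n\}$ in $\mathcal P(X\times \mathcal P^t_{d_Y}(Y))$, since relative compactness in the $\mathcal W_t$-topology requires not only weak tightness but also uniform integrability of the $t$-th moments. To arrange this I would invoke de la Vall\'ee Poussin applied to the single measure $\nu$: since $\int d_Y(y,y_0)^t\,\nu(dy)<\infty$, there exists a convex increasing $\theta\colon[0,\infty)\to[0,\infty)$ with $\theta(s)/s\to\infty$ and $M:=\int\theta(d_Y(y,y_0)^t)\,\nu(dy)<\infty$. Setting $\Theta(p):=\int\theta(d_Y(y,y_0)^t)\,p(dy)$, the sublevel sets $\{\Theta\leq R\}$ are relatively compact in $\mathcal P^t_{d_Y}(Y)$: weak tightness follows from Markov applied to $d_Y(\cdot,y_0)^t$, while uniform integrability of the $t$-th moment holds because the superlinearity of $\theta$ gives $d_Y^t\leq \varepsilon\,\theta(d_Y^t)$ outside a sufficiently large ball. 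Fubini yields $\int\Theta(p)\,\hat\pi_n(dx,dp) = \int\Theta(\pi_{n,x})\,\mu(dx) = M$ uniformly in $n$, and one more application of Markov to the second marginal delivers tightness.

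Having extracted a subsequence with $\hat\pi_{n_k}\to\hat\pi$ weakly, I would disintegrate $\hat\pi(dx,dp) = \mu(dx)\,q_x(dp)$ and define the candidate
\[
\pi(dx,dy) := \mu(dx)\,\bar p_x(dy), \qquad \bar p_x := \int p\, q_x(dp).
\]
That $\pi\in\Pi(\mu,\nu)$ is verified by testing against $\phi\in C_b(Y)$: the map $(x,p)\mapsto \int\phi\,dp$ is bounded continuous on $X\times\mathcal P^t_{d_Y}(Y)$, and the identity $\int\!\!\int\phi\,dp\,\hat\pi_n(dx,dp)=\int\phi\,d\nu$ passes to the limit. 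The lower-semicontinuity bound $\int\Theta\,d\hat\pi\leq \liminf\int\Theta\,d\hat\pi_n = M$ further ensures that $\bar p_x\in\mathcal P^t_{d_Y}(Y)$ for $\mu$-a.e.~$x$.

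Finally, the generalised Jensen inequality for Radon barycenters (applicable since $C(x,\cdot)$ is convex, lsc and bounded below) gives $C(x,\bar p_x)\leq \int C(x,p)\,q_x(dp)$ for $\mu$-a.e.~$x$, while joint lower semicontinuity of $C$ in the product topology with $\mathcal W_t$ on the second factor combined with Portmanteau yields
\[
\int C(x,\bar p_x)\,\mu(dx) \;\leq\; \int C\,d\hat\pi \;\leq\; \liminf_{k}\int C\,d\hat\pi_{n_k} \;=\; V(\mu,\nu),
\]
so $\pi$ is a minimizer. The essential novelty over Theorem~\ref{thm existence general} lies entirely in the tightness step, where the moment hypothesis on $\nu$ is converted via de la Vall\'ee Poussin into compactness in the finer $\mathcal W_t$-topology.
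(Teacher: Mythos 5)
Your overall architecture is exactly the paper's: lift the minimizing sequence through the embedding $\pi\mapsto J(\pi)$ into $\mathcal P(X\times\mathcal P_{d_Y}^t(Y))$, extract a weak limit there, use lower semicontinuity of $P\mapsto\int C\,dP$ for lsc bounded-below $C$, and push back down to a coupling via the barycenter $\bar p_x=\int p\,q_x(dp)$ together with Jensen's inequality for the convex map $C(x,\cdot)$ (this is the paper's Lemma~\ref{lem:convex optimality equivalence} combined with Proposition~\ref{th:gozlanlsc}). Those parts of your argument are fine, including the marginal check via $(x,p)\mapsto p(\phi)$.

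The gap is in the tightness step, and it is precisely the point the paper's Lemmas~\ref{th:tightness_old}--\ref{lem:wasserstein_relcomp} are designed to handle. You claim that the sublevel sets $\{p:\Theta(p)\le R\}$ of the de la Vall\'ee Poussin functional $\Theta(p)=\int\theta(d_Y(y,y_0)^t)\,p(dy)$ are relatively compact in $(\mathcal P_{d_Y}^t(Y),\mathcal W_t)$, with ``weak tightness from Markov applied to $d_Y(\cdot,y_0)^t$.'' Markov only shows that the measures in such a sublevel set put small mass outside large \emph{balls}; in a general Polish space $Y$ closed balls are not compact, so this does not give tightness. Concretely, take $Y$ an infinite discrete set with the $0$--$1$ metric: then $\Theta$ is bounded on all of $\mathcal P(Y)$, so a sublevel set contains every $\delta_{y_n}$, which has no convergent subsequence. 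Thus moment bounds alone never yield relative compactness of the second marginals of $\hat\pi_n$ in $\mathcal P(\mathcal P_{d_Y}^t(Y))$ when $Y$ is an arbitrary Polish space, which is the setting of the theorem. What is missing is an input that converts tightness of the single measure $\nu$ into tightness at the level of $\mathcal P(\mathcal P(Y))$: the paper does this via Lemma~\ref{th:tightness_old} (a family in $\mathcal P(\mathcal P(Y))$ is tight iff its intensities are tight) together with the refinement Lemma~\ref{th:tightness_new} for the $\mathcal W_t$-topology; here the intensity of the second marginal of every $J(\pi_n)$ is exactly $\nu$, so compactness is immediate. Your de la Vall\'ee Poussin function does correctly control the uniform integrability of the $t$-th moments (the second half of Lemma~\ref{lem:wasserstein_relcomp}), so the argument can be repaired, e.g.\ by augmenting $\Theta$ with a penalization $\sum_m 2^m p(K_m\complement)$ built from compacts $K_m$ with $\nu(K_m\complement)\le 4^{-m}$, whose expectation under the lifted measures is again controlled by Fubini; but as written the tightness claim fails and with it the extraction of the limit $\hat\pi$.
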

We emphasize that Theorem \ref{thm existence general} is a special case of Theorem \ref{thm existence marginals}. To see this, just take $d_Y$  to be a compatible bounded metric. 
%Moreover, Theorem \ref{thm existence  marginals} will be derived as a  consequence of Theorem \ref{thm existence compactset} below, in which the marginals are not necessarily fixed.
We also note that if $C$ is strictly convex in the second argument and $V(\mu,\nu)<\infty$, then the minimizer $\pi^*\in \Pi(\mu,\nu)$ is unique. We report our proofs in Section \ref{sec existence}.

\medskip

\subsubsection{Duality}

We fix a compatible metric  $d_Y$ on $Y$ and introduce the space
\begin{align}\label{def:intro Phi}
\Phi_{b,t} :=\{\psi:Y\to\mathbb R \text{ {\color{black}continuous}  s.t. $\exists a,b,\ell\in\mathbb R, y_0\in Y,$ }\ell\leq\psi(\cdot)\leq a+bd_Y(y_0,\cdot)^t\}
.
\end{align}
To each $\psi\in \Phi_{b,t}$ we associate the function
\begin{align}\label{def:RCphi}
	R_C\psi(x): = \inf_{p \in \mathcal P_{d_Y}^t(Y)} p(\psi) + C(x,p).
\end{align}
{We remark that $R_C\psi(\cdot)$ is universally  measurable if $C$ is measurable (\cite[Proposition 7.47]{BeSh78}) and so the integral $\mu(R_C\psi)$ is well defined for all $\mu\in\mathcal P(Y)$ if $C$ is lower-bounded. 
\begin{theorem}\label{th:kantorovichduality}
	Let $C\colon X\times \mathcal P_{d_Y}^t(Y)\rightarrow \mathbb{R}\cup\{\infty\}$ be jointly lower semicontinuous with respect to the product topology on $X \times \mathcal P_{d_Y}^t(Y)$, bounded from below, and convex in the second argument. Then we have {\color{black}for all} $\mu\in\mathcal P(X)$ and $\nu\in\mathcal P_{d_Y}^t(Y)$
	\begin{align}
		V(\mu,\nu) = \sup_{\psi \in \Phi_{b,t}} \mu(R_C\psi) - \nu(\psi).
	\end{align}
\end{theorem}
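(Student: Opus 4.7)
The plan is to prove the nontrivial inequality $V(\mu,\nu)\le\sup_\psi[\mu(R_C\psi)-\nu(\psi)]$ by reducing the weak transport problem to a genuine linear program on the lifted space $\mathcal P(X\times\mathcal P_{d_Y}^t(Y))$, and then running a minimax/Fenchel--Rockafellar argument in the spirit of the classical proof of Kantorovich duality. Weak duality is direct: for any $\pi\in\Pi(\mu,\nu)$ and any $\psi\in\Phi_{b,t}$, the definition of $R_C\psi$ yields $R_C\psi(x)\le\pi_x(\psi)+C(x,\pi_x)$; integrating against $\mu$ and using $\int\pi_x(\psi)\,\mu(dx)=\nu(\psi)$ (finite because $\psi$ has two-sided $d_Y^t$-growth and $\nu\in\mathcal P_{d_Y}^t(Y)$) gives $\mu(R_C\psi)-\nu(\psi)\le\int C(x,\pi_x)\,\mu(dx)$.

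For the reverse inequality I would first lift by defining
\[
\tilde V(\mu,\nu):=\inf\Bigl\{\int C(x,p)\,P(dx,dp):\,P\in\mathcal P(X\times\mathcal P_{d_Y}^t(Y)),\ \pr_X P=\mu,\ \int p\,P(dx,dp)=\nu\Bigr\}
\]
and showing $\tilde V=V$. The inequality $\tilde V\le V$ is realized by $P=\mu(dx)\otimes\delta_{\pi_x}(dp)$; the reverse uses the barycentric selection $\pi_x:=\int p\,P_x(dp)$ obtained by disintegrating $P$ along $\mu$, combined with Jensen's inequality applied to the convex map $C(x,\cdot)$. This embedding is the key trick suggested in the introduction: it transforms the nonlinear cost $\pi\mapsto\int C(x,\pi_x)\,\mu(dx)$ into a linear functional of $P$.

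The lifted problem is linear with linear constraints, so the natural Lagrangian
\[
L(P,\psi):=\int[C(x,p)+p(\psi)]\,P(dx,dp)-\nu(\psi),\qquad \psi\in\Phi_{b,t},
\]
on $\{P:\pr_X P=\mu\}$ satisfies $\sup_\psi L(P,\psi)=\int C\,dP$ when $\int p\,P(dx,dp)=\nu$ and $+\infty$ otherwise (using that $\Phi_{b,t}$, modulo additive constants, contains $C_b(Y)$ and hence separates measures in $\mathcal P_{d_Y}^t(Y)$), so $\inf_P\sup_\psi L=\tilde V$. On the other hand, since only the first marginal of $P$ is constrained, the inner infimum can be computed pointwise in $x$, yielding $\inf_P L(P,\psi)=\mu(R_C\psi)-\nu(\psi)$. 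The claim thus reduces to the minimax identity $\inf\sup=\sup\inf$ for $L$.

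The main obstacle is justifying this minimax exchange. I would invoke a Sion-type theorem; its key ingredients are lower semicontinuity of $P\mapsto\int C\,dP$ and compactness of the constraint set, which is exactly where the ``adapted'' viewpoint of the paper is indispensable: equipping $\mathcal P_{d_Y}^t(Y)$ with $\mathcal W_t$ and the lifted state space with the corresponding weak topology makes $\{P:\pr_X P=\mu,\ \int p\,P(dx,dp)=\nu\}$ compact and $P\mapsto\int C\,dP$ lower semicontinuous (thanks to the joint lsc of $C$ assumed in the statement, together with the $\mathcal W_t$-continuity of $p\mapsto p(\psi)$ for $\psi\in\Phi_{b,t}$). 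A secondary technical point, that $\Phi_{b,t}$ is only a convex cone rather than a vector space, can be handled by first establishing the identity for bounded continuous $\psi$ and extending to the $d_Y^t$-growth class via monotone truncation $\psi_n:=\psi\wedge n$ together with the upper bound $\psi\le a+bd_Y^t$ and the moment assumption on $\nu$.
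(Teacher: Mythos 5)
Your weak-duality argument and the lifting step are sound, and in fact the lift coincides with the paper's own reduction (Lemma~\ref{lem:convex optimality equivalence}, via Jensen applied to $C(x,\cdot)$). The genuine gap is the minimax exchange. In your Lagrangian $L(P,\psi)$ the intensity constraint $\int p\,P(dx,dp)=\nu$ has been dualized away, so the inner infimum that is supposed to produce the dual function must run over the \emph{partially} constrained set $\{P:\pr_X P=\mu\}$ (only then does choosing $P_x=\delta_{p_x}$ with $p_x$ nearly optimal give $\inf_P L(P,\psi)=\mu(R_C\psi)-\nu(\psi)$). That set is not compact when $Y$ is noncompact: its $\mathcal P(\mathcal P(Y))$-marginal is arbitrary, and by the intensity criterion (Lemma~\ref{th:tightness_old}, Lemma~\ref{th:tightness_new}) tightness there would force tightness of the corresponding intensities in $\mathcal P(Y)$, which fails; in the $\mathcal W_t$-adapted topology the situation is only worse. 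The set you actually exhibit as compact, $\{P:\pr_X P=\mu,\ \int p\,P(dx,dp)=\nu\}=\Lambda(\mu,\nu)$, is indeed compact in the adapted topology, but on it the Lagrange term vanishes identically, so restricting $P$ to it makes the minimax identity trivially true and no longer computes $\mu(R_C\psi)-\nu(\psi)$. Since $\Phi_{b,t}$ is not compact either, a Sion-type theorem does not apply to $L$ as you have set it up; this exchange is exactly the hard point of the theorem, not a routine technicality. A secondary gap: even granting the exchange, the identification $\inf_{P:\pr_X P=\mu}L(P,\psi)=\mu(R_C\psi)-\nu(\psi)$ needs an analytically measurable $\epsilon$-optimal kernel $x\mapsto p_x$ and universal measurability of $R_C\psi$; this requires a selection theorem and is not automatic.

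For comparison, the paper avoids any saddle-point statement on a noncompact domain: fixing $\mu$, it studies $F(m)=\inf_{P\in\Lambda(\mu,m)}\int\bigl[C(x,p)+\mathcal W_t(p,\delta_{y_0})^t\bigr]P(dx,dp)$ as a convex function of the target marginal $m$, extended by $+\infty$ to $\mathcal M_{d_Y}^t(Y)$, proves it lower semicontinuous for the topology induced by $\Phi_t$ — this is where the adapted-topology compactness results (Lemma~\ref{lem:relcompembedding}, Proposition~\ref{th:gozlanlsc}, Theorem~\ref{thm existence compactset}) are actually consumed — and then applies Fenchel--Moreau biconjugation in the dual pair $(\mathcal M_{d_Y}^t(Y),\Phi_t)$, computing $F^*(-\psi)=-\mu(R_{\widetilde C}\psi)$ via a measurable selection (\cite[Propositions 7.47 and 7.50]{BeSh78}); the coercive term $\mathcal W_t(p,\delta_{y_0})^t$ is finally removed by the shift $\psi\mapsto\psi+d_Y(\cdot,y_0)^t$ inside $\Phi_{b,t}$. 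If you prefer to keep a Lagrangian formulation, you must replace Sion by a Fenchel--Rockafellar-type argument whose qualification is continuity of one conjugate at a point (or coercivity), not compactness, and you still need the measurable-selection step; as written, the central step of your proposal is unjustified.
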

The proof of Theorem \ref{th:kantorovichduality} is provided in Section \ref{sec:duality}. We also refer to this section for a comparison of earlier duality results of Gozlan, Roberto, Samson, and Tetali 
\cite[Theorem 9.6]{GoRoSaTe17} and Alibert, Bouchitt\'e, and Champion \cite[Theorem 4.2]{AlBoCh18}.

\medskip

\subsubsection{$C$-monotonicity}
Besides primal existence and duality, another fundamental result in classical optimal transport is the characterization of optimality through the notion of \emph{cyclical monot{\color{black}o}nicity}; see \cite{RaRu90, GaMc96} as well as the monographs \cite{RaRu98, Vi03, Vi09}. More recently, variants of this `monotonicity priniciple' have been applied in transport problems for finitely or infinitely many marginals \cite{Pa12fm, CoDeDi15, Gr16a, BeGr14, Za14}, the martingale version of the optimal transport problem \cite{BeJu16, NuSt16, BeNuTo16}, the Skorokhod embedding problem \cite{BeCoHu14} and the distribution constrained optimal stopping problem \cite{BeEdElSc16}.

 We provide in Definition \ref{def:C-monotonicity} below, a  concept analogous to cyclical monotonicity (which we call $C$-monotonicity) for weak transport costs $C$. We show that every optimal transport plan is $C$-monotone in a very general setup. Conversely, we {\color{black}have} that every $C$-monotone transport plan is optimal under certain regularity assumptions. See Theorems \ref{thm cyclical} and \ref{th:compactsufficient} respectively.

We note that related concepts already appeared in \cite[Proposition 4.1]{BaBeHuKa17} (where necessity of a 2-step optimality condition is established) and in \cite{GoJu18} (necessity in the case of compactly supported measures and a quadratic cost criterion). To the best of our knowledge, our sufficient criterion is the first of its kind for weak transport costs.

{\color{black}We remark that the 2-step monotonicity principle for weak transport costs has already proved vital in \cite{BaBeHuKa17} for the construction of a martingale counterpart to the Brenier theorem and the Benamou-Brenier formula. On the other hand, we conjecture that this monotonicity principle could be used in order to generalize \cite{GoJu18} to non-quadratic costs. }
\medskip

\subsubsection{A general Brenier-Strassen theorem}\label{sec:BrSt}
As an application of our abstract results we extend the Brenier-Strassen theorem  \cite[Theorem 1.2]{GoJu18} of Gozlan and Juillet  to the case of general probabilities on $X=Y=\R^d$ under the assumption that $\mu$ has finite second moment and $\nu$ has finite first moment. We thus drop the condition in \cite{GoJu18} that the marginals have compact support. For this part we set \begin{align}\label{GJcost}C(x,\rho):= \left| x - \int y\rho(dy) \right|^2,\end{align}
and write $\leq_c$ for the convex order of probability measures.

\begin{theorem}\label{thm a la GJ}
Let $\mu\in \mathcal P^2(\mathbb R^d)$ and $\nu\in \mathcal P^1(\mathbb R^d)$. There exists a unique $\mu^* \leq_c\nu$ such that
\begin{equation}\label{eq GJ mu star}
\mathcal W_2(\mu^*,\mu)^2= \inf_{\eta\leq_c\nu}\mathcal  W_2(\eta,\mu)^2=V(\mu,\nu).
\end{equation}
Moreover, there exists a convex function $\phi:\mathbb R^d\to\mathbb R$ of class $\mathcal C^1$ with $\nabla \phi$ being 1-Lipschitz, such that $\nabla\phi(\mu)=\mu^*$. Finally, an optimal coupling $\pi^*\in\Pi(\mu,\nu)$ for $V(\mu,\nu)$ exists, and a coupling $\pi\in\Pi(\mu,\nu)$ is optimal for $V(\mu,\nu)$ if and only if $\int y\pi_x(dy)=\nabla\phi(x)$ $\mu$-a.s. 
\end{theorem}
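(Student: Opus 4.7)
The strategy is to pivot around the identity
\begin{align*}
V(\mu,\nu)=\inf_{\eta\leq_c\nu}\mathcal W_2(\mu,\eta)^2,
\end{align*}
and then to extract the convex potential $\phi$ from the dual. First I would prove this identity: for $\pi\in\Pi(\mu,\nu)$ let $T_\pi(x):=\int y\,\pi_x(dy)$. Then $(T_\pi)_*\mu\leq_c\nu$ (disintegrate $\pi_x$ along its barycenter to produce a martingale kernel into $\nu$), and Jensen gives $\int C(x,\pi_x)\,\mu(dx)\geq\mathcal W_2(\mu,(T_\pi)_*\mu)^2$. Conversely, given $\eta\leq_c\nu$, a $\mathcal W_2$-optimal $\gamma\in\Pi(\mu,\eta)$, and a martingale kernel from $\eta$ to $\nu$, gluing them produces $\pi\in\Pi(\mu,\nu)$ with $\int C(x,\pi_x)\,\mu(dx)\leq\mathcal W_2(\mu,\eta)^2$. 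This reduces the theorem to exhibiting an optimal $\mu^*$ as a pushforward by a convex gradient.

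\textbf{Existence.} I would deduce existence of an optimal $\pi^*$ from Theorem~\ref{thm existence marginals} applied with $t=1$: the cost $C(x,\rho)=|x-\int y\,\rho(dy)|^2$ is non-negative, convex in $\rho$, and continuous on $\R^d\times\mathcal P_{|\cdot|}^1(\R^d)$ because the barycenter map is $\mathcal W_1$-continuous. Finiteness of $V(\mu,\nu)$ follows from $\mu\in\mathcal P^2$ and $\nu\in\mathcal P^1$ (e.g.\ via the product coupling). Setting $\mu^*:=(T_{\pi^*})_*\mu$, the previous paragraph yields $\mu^*\leq_c\nu$ and $\mathcal W_2(\mu,\mu^*)^2=V(\mu,\nu)$.

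\textbf{Brenier structure via duality.} I would then invoke Theorem~\ref{th:kantorovichduality}. The key computation is
\begin{align*}
R_C\psi(x)=\inf_{\rho\in\mathcal P^1(\R^d)}\bigl[\rho(\psi)+|x-\bar\rho|^2\bigr]=\inf_{m\in\R^d}\bigl[\psi^{**}(m)+|x-m|^2\bigr],
\end{align*}
using the Carath\'eodory-type identity $\inf\{\rho(\psi):\bar\rho=m\}=\psi^{**}(m)$. Since $R_C\psi=R_C\psi^{**}$ while $\nu(\psi^{**})\leq\nu(\psi)$, the dual may be restricted to convex $\psi$. For such $\psi$, letting $G(m):=\psi(m)+|m|^2$ gives $R_C\psi(x)=|x|^2-G^*(2x)$, whence
\begin{align*}
\phi(x):=\tfrac{1}{2}\bigl(|x|^2-R_C\psi(x)\bigr)=\tfrac{1}{2}G^*(2x)
\end{align*}
is convex. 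Because $G$ is $2$-strongly convex, $G^*$ is $\tfrac{1}{2}$-smooth, so $\phi\in\mathcal C^1$ with $\nabla\phi$ being $1$-Lipschitz, and the envelope theorem identifies $\nabla\phi(x)$ as the unique minimiser in the definition of $R_C\psi(x)$.

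\textbf{Conclusion and main obstacle.} Complementary slackness between any optimal $\pi$ and a dual-optimal convex $\psi$ forces, $\mu$-a.s., $\pi_x$ to minimise $\rho\mapsto\rho(\psi)+|x-\bar\rho|^2$; the unique barycenter of such a minimiser is $\nabla\phi(x)$, so $T_\pi(x)=\nabla\phi(x)$ $\mu$-a.s.\ for every optimal $\pi$. This yields $\mu^*=(\nabla\phi)_*\mu$, uniqueness of $\mu^*$, and the announced a.s.\ characterisation of optimal couplings. The main obstacle is that Theorem~\ref{th:kantorovichduality} asserts duality but not dual attainment; I would address this either by a direct compactness argument on the family of normalised convex $1$-Lipschitz-gradient candidates, or by approximating $(\mu,\nu)$ by compactly supported $(\mu_n,\nu_n)$, applying \cite{GoJu18} there, and passing to the limit using uniform Lipschitz bounds on $\nabla\phi_n$ together with stability of the primal problem.
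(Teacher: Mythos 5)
Your first two steps coincide with the paper's: the identity $V(\mu,\nu)=\inf_{\eta\leq_c\nu}\mathcal W_2(\mu,\eta)^2$ via Jensen plus Strassen/gluing is exactly Lemma~\ref{lem basic equality theta}, and existence of $\pi^*$ via Theorem~\ref{thm existence marginals} with $t=1$ (barycentric cost continuous for $\mathcal W_1$, convex, nonnegative) is also how the paper proceeds. Where you diverge is the Brenier structure: you want to produce $\phi$ from a dual optimizer and conclude by complementary slackness, whereas the paper never attains the dual; it uses the necessary $C$-monotonicity principle (Theorem~\ref{thm cyclical}) to show that $(x,T(x))$, $T(x)=\int y\,\pi_x(dy)$, has cyclically monotone support, invokes Rockafellar and Knott--Smith to identify $\mu^*=T(\mu)$ and the value, extracts the $1$-Lipschitz property of $T$ from the monotonicity inequality as in \cite{GoJu18}, and then builds the $C^1$ convex potential by mollifying $\mu$, applying Brenier's theorem to the regularized problems, and passing to the limit via stability of the weak transport problem \cite{BaBePa19} and Arzel\`a--Ascoli.

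The genuine gap is the one you flag yourself but do not close: dual attainment. Theorem~\ref{th:kantorovichduality} gives only the value identity, not a maximizing $\psi$, and every remaining claim in your argument --- construction of $\phi$, the $1$-Lipschitz gradient bound, the $\mu$-a.s.\ identification $T_\pi=\nabla\phi$ for \emph{every} optimal $\pi$, uniqueness of $\mu^*$, and the ``if and only if'' characterization --- is routed through complementary slackness against an attained dual optimizer. On non-compact $\R^d$ this attainment is a substantive theorem in its own right (it is essentially the content of Gozlan--Juillet's revised, duality-based proof mentioned in the paper's note added in revision), and neither of your two sketched fixes is carried out. A ``direct compactness argument on normalised convex candidates'' needs a priori uniform (local) bounds on a maximizing sequence of potentials; nothing in $\Phi_{b,1}$ forces Lipschitz or locally bounded behaviour, and even the paper's restriction to $L$-Lipschitz potentials (Lemma~\ref{lem:lipschitzduality}) requires a delicate measurable-selection argument, after which attainment is obtained only pointwise on a $C$-monotone set via a Tychonoff/finite-intersection argument --- i.e.\ precisely the machinery your route would have to reconstruct. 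The alternative route via compactly supported approximations $(\mu_n,\nu_n)$ and \cite{GoJu18} can work (an earlier version of the paper did something of this kind), but it needs (i) a stability theorem for the weak transport problem in both marginals, which the paper must import from \cite{BaBePa19}, (ii) a pointwise bound on the approximating maps $\nabla\phi_n$ at some point so that the uniform Lipschitz bound yields local compactness (the paper obtains this from tightness of $\mu^k$ and $T^k(\mu^k)$), and (iii) an identification of the limit map with $T$, which the paper gets from uniqueness of the optimizer. As written, the proposal is therefore incomplete at its central step; the minor conjugacy computations ($R_C\psi(x)=|x|^2-G^*(2x)$, strong convexity of $G$ giving a $1$-Lipschitz $\nabla\phi$) are fine, but they only bear fruit once a dual optimizer is actually exhibited.
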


Existence of $\mu^*$ and the expression \eqref{eq GJ mu star} were first proved by Gozlan et al \cite{GoRoSaSh18} for $d=1$ and by Alfonsi, Corbetta, Jourdain \cite{AlCoJo17} for arbitrary $d\in\N$. Indeed a general version of \eqref{eq GJ mu star}, appealing to $\mathcal W_p$ and probabilities $\mu, \nu \in \mathcal P^p(\mathbb R^d)$ is provided in \cite{AlCoJo17}.  All other statements in the above theorem were originally established by Gozlan and Juillet \cite{GoJu18} under the assumption of compactly supported measures $\mu, \nu$. The proof of Theorem \ref{thm a la GJ} is given in Section \ref{sec BS GJ}.
{\color{black}

{\bf Note added in revision.} In an updated version of \cite{GoJu18}, Gozlan and Juillet have also removed the compactness assumption in Theorem~\ref{thm a la GJ}. Their proof is based on duality arguments and in particular differs from the one given here.}

\section{Existence of minimizers }\label{sec existence}

{\color{black}A principal idea behind the proofs of this paper is to endow the set of transport plans $\mathcal P(X\times Y)$ with a topology that is finer than the usual weak topology and which appropriately accounts for the asymmetric role of $X$ and $Y$ in the context of weak transport. This can be formalized by embedding $\mathcal P(X\times Y)$ into the bigger space $\mathcal P(X\times \mathcal P(Y))$. I.e., given a transport plan $\pi$, we will consider its disintegration $(\pi_x)_{x\in X}$ (w.r.t. its first marginal) and view it as a \emph{Monge-type} coupling in the larger space $\mathcal P(X\times \mathcal P(Y))$.} It turns out that on this `extended' space the minimization problems  Theorem~\ref{thm existence marginals} and Theorem~\ref{thm existence general} can be handled more {\color{black}efficiently}.

We need to introduce additional notation: for a probability measure $\pi\in \mathcal P(X\times Y)$ with not further specified marginals, we write $\pi(\mathrm dx\times Y)$ and $\pi(X\times \mathrm dy)$ for its $X$-marginal and $Y$-marginal respectively. At several instances we use the projection from a product space onto one of its components. This map is usually denoted by $\proj_\bullet$ where the subscript describes the component, e.g.\ $\proj_X\colon X\times Y \rightarrow X$ stands for the projection onto the $X$-component.
Denoting by $(\pi_x)_{x\in X}$ a regular disintegration of $\pi$ with respect to $\pi(dx\times Y$), we  consider the measurable map 
\begin{align*}
\kappa_\pi \colon  &X \rightarrow X \times \mathcal P(Y)\\ & \,x\mapsto (x,\pi_x).
\end{align*}
We define the embedding $J\colon  \mathcal P(X\times Y ) \rightarrow \mathcal P(X\times\mathcal P(Y))$ by setting for $\pi\in  \mathcal P(X\times Y ) $ with $X$-marginal $\mu(dx)=\pi(dx\times Y)$
\begin{align}\label{eq:def J}
J(\pi):=  (\kappa_\pi)_\#(\mu).
\end{align}
The map $J$ is well-defined since $\kappa_\pi$ is $\pi(dx\times Y)$-almost surely unique. Note that elements in $\mathcal P(X\times Y)$ precisely correspond to those elements of $\mathcal P(X\times \mathcal P(Y))$ which are concentrated on a graph of a measurable function from $X$ to $\mathcal P(Y)$.

The intensity $I(P)\in\mathcal P(Y)$ of $P\in\mathcal P(\mathcal P(Y))$ is uniquely determined by
\begin{align}\label{eq:def intensity1}
	I(P)(f) = \int_{\mathcal P(Y)} p(f) P(dp) \quad \forall f\in C_b(Y).
\end{align}
The set of all probability measures $P\in\mathcal P(X \times \mathcal P(Y))$ with $X$-marginal $\mu$ and `{\color{black}$\mathcal P(Y)$}-marginal intensity' $\nu$ is denoted by
\begin{align}
	\Lambda(\mu,\nu) := \left\{ P \in \mathcal P(X\times \mathcal P(Y))\, \mid\, \proj_X P = \mu,\, I(\proj_{\mathcal P(Y)}(P)) = \nu\right\}.
\end{align}
Similar to \eqref{eq:def intensity1}, we define the intensity of $P\in \mathcal P(X\times \mathcal P(Y))$ as the unique measure $\hat I(P) \in \mathcal P(X\times Y)$ such that
\begin{align}\label{eq:def intensity2}
\int_{X\times Y} f(x,y) \hat I(P)(dx,dy) = \int_{X\times \mathcal P(Y)} \int_Y f(x,y)p(dy) P(dx,dp)\quad \forall f\in C_b(X\times Y).
\end{align}
Note that while $J$ is in general not continuous (cf.\ Example \ref{ex:J discontinuous}), the mappings $I$ and $\hat I$ are continuous.

Using \eqref{eq:def J} and \eqref{eq:def intensity2} we find that $$\Lambda(\mu,\nu) = \hat I^{-1}(\Pi(\mu,\nu))\text{  and  }J(\Pi(\mu,\nu))\subset \Lambda(\mu,\nu).$$
{\color{black}Also note that $\hat I$ is the left-inverse of $J$, i.e., $\hat I \circ J(\pi) = \pi$ for $\pi \in \mathcal P(X\times Y)$.}
We now describe the relation between minimization problems on $\Pi(\mu,\nu)$ and $\Lambda(\mu,\nu)$:

\begin{lemma}\label{lem:convex optimality equivalence}
	Let $C\colon X\times \mathcal P(Y) \rightarrow \R\cup\{-\infty,+\infty\}$ be measurable, {\color{black}lower-bounded}, and convex in the second argument. Then
	\begin{align}
		V(\mu,\nu) = \hat V(\mu,\nu),
	\end{align}
	where $V$ was defined in \eqref{eq weak transport def} and
	\begin{align}\label{eq:defhatV}
	\hat V(\mu,\nu):=\inf_{P \in \Lambda(\mu,\nu)} \int_{X\times \mathcal P(Y)} C(x,p)P(dx,dp).
	\end{align}
\end{lemma}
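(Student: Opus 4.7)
\medskip
\textbf{Proof plan.} The plan is to establish the two inequalities $\hat V(\mu,\nu)\le V(\mu,\nu)$ and $\hat V(\mu,\nu)\ge V(\mu,\nu)$ separately, using the embedding $J$ for the first and the intensity map $\hat I$ together with Jensen's inequality for the second.

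For the direction $\hat V(\mu,\nu)\le V(\mu,\nu)$, I would start from an arbitrary $\pi\in\Pi(\mu,\nu)$ and pass to $P:=J(\pi)\in\Lambda(\mu,\nu)$, which is admissible by the inclusion $J(\Pi(\mu,\nu))\subset\Lambda(\mu,\nu)$ recorded in the text. By construction $P=(\kappa_\pi)_\#\mu$ is supported on the graph $\{(x,\pi_x):x\in X\}$, so a change of variables yields
\begin{align*}
\int_{X\times\mathcal P(Y)} C(x,p)\,P(dx,dp)=\int_X C(x,\pi_x)\,\mu(dx).
\end{align*}
Taking the infimum over $\pi$ gives $\hat V(\mu,\nu)\le V(\mu,\nu)$.

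For the converse direction I would take $P\in\Lambda(\mu,\nu)$ and set $\pi:=\hat I(P)\in\Pi(\mu,\nu)$; the membership holds because $\Lambda(\mu,\nu)=\hat I^{-1}(\Pi(\mu,\nu))$. Disintegrating $P$ with respect to $\mu$ as $P(dx,dp)=\mu(dx)P_x(dp)$, the defining property \eqref{eq:def intensity2} of the intensity shows that for $\mu$-a.e.\ $x$ the kernel $\pi_x$ is the barycentre of $P_x$, i.e.\ $\pi_x=\int_{\mathcal P(Y)}p\,P_x(dp)$ in the sense that $\pi_x(f)=\int p(f)\,P_x(dp)$ for every $f\in C_b(Y)$. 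Since $p\mapsto C(x,p)$ is convex (and lower-bounded, so all integrals make sense in $(-\infty,+\infty]$), Jensen's inequality gives
\begin{align*}
C(x,\pi_x)\;\le\;\int_{\mathcal P(Y)} C(x,p)\,P_x(dp)\qquad\text{for $\mu$-a.e.\ }x.
\end{align*}
Integrating against $\mu$ and applying Fubini yields $\int_X C(x,\pi_x)\,\mu(dx)\le\int C(x,p)\,P(dx,dp)$, and taking the infimum over $P$ gives $V(\mu,\nu)\le\hat V(\mu,\nu)$.

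The main subtlety I expect is the validity of Jensen's inequality on the infinite-dimensional convex set $\mathcal P(Y)$ under only measurability of $C(x,\cdot)$; in practice one reduces to the lower semicontinuous case (either by an enveloping argument or by invoking a general barycentric Jensen inequality for convex measurable integrands on Polish convex sets), and the remaining measurability issues in $x$ are handled by the standard measurable selection/disintegration machinery, noting that the disintegration $(P_x)$ can be chosen measurably and that $x\mapsto\pi_x$ is the measurable kernel obtained from $\hat I(P)$.
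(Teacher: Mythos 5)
Your proposal is correct and follows essentially the same route as the paper: the inequality $\hat V\le V$ via $J(\pi)\in\Lambda(\mu,\nu)$ and the push-forward identity, and $V\le\hat V$ via $\pi=\hat I(P)$, the barycentre identity $\hat I(P)_x=\int p\,P_x(dp)$, and convexity of $C(x,\cdot)$ in Jensen form. The barycentric Jensen step you flag is treated in the paper simply as ``using convexity,'' so your extra caution (reduction to the lower semicontinuous case) only adds detail beyond what the paper records.
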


\begin{proof}
	For any $\pi\in\Pi(\mu,\nu)$ we have $J(\pi) \in \Lambda(\mu,\nu)$ and
	$$\int_X C(x,\pi_x)\mu(dx) = \int_{X\times \mathcal P(Y)} C(x,p) J(\pi)(dx,dp).$$
	Thus,
	$$\inf_{\pi\in\Pi(\mu,\nu)} \int_X C(x,\pi_x) \mu(dx) \geq \inf_{P\in\Lambda(\mu,\nu)} \int_{X\times \mathcal P(Y)} C(x,p)P(dx,dp).$$
	Now, letting $P\in\Lambda(\mu,\nu)$, we easily derive from \eqref{eq:def intensity2} that $\hat I(P) \in \Pi(\mu,\nu)$ and $\hat I(P)_x = \int_{\mathcal P(Y)} p \,P_x(dp)$ for $\mu$-a.e $x$. Using convexity we conclude
	\begin{align*}
	\int_{X\times \mathcal P(Y)} C(x,p) P(dx,dp) &=
	\int_X \int_{\mathcal P(Y)} C(x,p) P_x(dp) \mu(dx) \\
	&\geq \int_X C(x,\hat I(P)_x) \mu(dx)\\
	&\geq \inf_{\pi\in\Pi(\mu,\nu)} \int_X C(x,\pi_x) \mu(dx).
	\end{align*}
\end{proof}

\subsection{Existence of minimizers}
The purpose of this subsection is to establish Theorem~\ref{thm existence marginals}, or more precisely, a strengthened version of it; see Theorem~\ref{thm existence compactset} below. To this end we need a number of auxiliary results.

We start by stressing that, in general, the embedding $J$ is not continuous. In fact: 

\begin{example} \label{ex:J discontinuous}
The map $J$ is continuous if and only if $X$ is discrete or $|Y|=1$. Indeed, given $X$ discrete and a sequence $(\pi^k)_{k\in\N}\in\mathcal P(X\times Y)^\N$ which weakly converges to $\pi$, we have that $ \pi^k(x\times Y)\to \pi(x \times Y)$ from which $\pi_x^k(dy)=\frac{\pi^k(x,dy)}{\pi^k(x\times Y)}$ converges weakly to $\pi_x(dy)=\frac{\pi(x,dy)}{\pi(x\times Y)}$ if $\pi(x \times Y)>0$.
%$$\lim_k \pi^k(x\times Y) = \pi(x\times Y),\quad \pi^k_x\pi^k(x\times Y) \rightharpoonup \pi_x\pi(x\times Y) \text{ for }k\rightarrow \infty.$$
Consequently if $f \in C_b(X\times \mathcal P(Y))$, then
\begin{align*}
\lim_k &|J(\pi^k)(f) - J(\pi)(f)| \\ &\leq \limsup_k \sum_x |f(x,\pi^k_x)(\pi^k(x\times Y)-\pi(x\times Y))| + \sum_x|f(x,\pi^k_x) - f(x,\pi_x)|\pi(x\times Y) \\ &= 0.
\end{align*}
Therefore $(J(\pi^k))_{k\in\N}$ converges weakly to $J(\pi)$.  On the other hand, suppose there is a sequence $(x_k)_{k\in\N}\in X^\N$ of distinct points converging to some $x\in X$, as well as $p,q\in\mathcal P(Y)$ with $p\neq q$. For $k\in\N$ define a probability measure on $\mathcal P(X\times Y)$ by
$$\pi^k(dx,dy) := \frac{1}{2}(\delta_{x_{k+1}}(dx)p(dy) + \delta_{x_k}(dx)q(dy)).$$
A short computation yields
\begin{align*}
	&\lim_kJ(\pi^k)= \lim_k\frac{1}{2}\left(\delta_{(x_{k+1},p)} + \delta_{(x_k,q)}\right) =\frac{1}{2}\left(\delta_{(x,p)} + \delta_{(x,q)}\right),\\
	&J(\lim_k \pi^k)= J\Big(\frac{1}{2}\delta_x(p+q)\Big) = \delta_{\left(x,\frac{1}{2}(p+q)\right )},
\end{align*}
which shows that $J$ is discontinuous.

\end{example} 

On the bright side, $J$ possesses a crucial feature: it maps relatively compact sets to relatively compact sets. We prove this in Lemma \ref{lem:relcompembedding} below. But first we need to digress into the characterization of tightness on $\mathcal P(\mathcal P(Y))$ and subspaces thereof. The following can be found in \cite[p. 178, Ch. II]{Sz91}.
\begin{lemma}\label{th:tightness_old}
	A set $\mathcal A\subseteq \mathcal P(\mathcal P(Y))$ is tight if and only
	if the set of its intensities $I(\mathcal A)$ is tight in $\mathcal P(Y)$.
\end{lemma}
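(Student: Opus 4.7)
The plan is to prove the two implications of the equivalence separately, both as applications of Prokhorov's theorem.

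For the forward direction, given tightness of $\mathcal A$ and $\eta>0$, I would pick a compact $\mathcal K\subset \mathcal P(Y)$ with $P(\mathcal K^{c})<\eta/2$ for all $P\in\mathcal A$. Since $\mathcal K$ is compact in $\mathcal P(Y)$, Prokhorov's theorem makes it tight in $\mathcal P(Y)$, so there is a compact $L\subset Y$ with $p(L^{c})<\eta/2$ for every $p\in\mathcal K$. Splitting the integral defining the intensity,
\[
I(P)(L^{c}) \;=\; \int_{\mathcal K} p(L^{c})\,P(dp) \;+\; \int_{\mathcal K^{c}} p(L^{c})\,P(dp) \;\leq\; \tfrac{\eta}{2} + \tfrac{\eta}{2} \;=\; \eta,
\]
uniformly in $P\in\mathcal A$, which delivers tightness of $I(\mathcal A)$.

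For the reverse direction I would argue dually. Assuming $I(\mathcal A)$ tight and given $\epsilon>0$, for each $n\in\N$ I select a compact $K_{n}\subset Y$ with $I(P)(K_{n}^{c})<\epsilon\,2^{-2n-1}$ for every $P\in\mathcal A$, and set
\[
\mathcal K_{\epsilon} \;:=\; \bigcap_{n\in\N}\bigl\{p\in\mathcal P(Y):p(K_{n})\geq 1-2^{-n}\bigr\}.
\]
By the Portmanteau theorem the map $p\mapsto p(K_{n})$ is upper semicontinuous on $\mathcal P(Y)$ (each $K_{n}$ being closed), so every level set in the intersection is closed, and thus $\mathcal K_{\epsilon}$ is closed. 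It is uniformly tight by construction, so Prokhorov's theorem makes $\mathcal K_{\epsilon}$ compact in $\mathcal P(Y)$. A Markov-type estimate combined with $\int p(K_{n}^{c})\,P(dp)=I(P)(K_{n}^{c})$,
\[
P\bigl(\{p:p(K_{n}^{c})>2^{-n}\}\bigr) \;\leq\; 2^{n}\,I(P)(K_{n}^{c}) \;<\; \epsilon\,2^{-n-1},
\]
together with a union bound, then bounds $P(\mathcal K_{\epsilon}^{c})$ by $\epsilon/2<\epsilon$ uniformly in $P\in\mathcal A$, so $\mathcal A$ is tight.

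The argument is essentially routine once the right compact set $\mathcal K_{\epsilon}$ is written down; the mild obstacle I anticipate is confirming that $\mathcal K_{\epsilon}$ is genuinely closed in the weak topology, for which the Portmanteau characterization of weak convergence is exactly what is needed.
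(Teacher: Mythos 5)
Your proof is correct. The paper itself offers no argument for this lemma -- it is quoted from Sznitman's lecture notes -- and your two-sided application of Prokhorov's theorem (compactness of a set of laws gives tightness for the forward direction; a closed, uniformly tight intersection of level sets $\{p: p(K_n)\geq 1-2^{-n}\}$ plus a Markov/union bound for the reverse) is exactly the standard route one finds in that reference, so there is nothing substantive to compare. The only step you use silently is that the defining identity $I(P)(f)=\int p(f)\,P(dp)$, stated for $f\in C_b(Y)$, extends to $I(P)(B)=\int p(B)\,P(dp)$ for Borel (in particular closed or open) sets $B$; this follows by a routine monotone class argument, since $B\mapsto \int p(B)\,P(dp)$ is a probability measure agreeing with $I(P)$ on $C_b(Y)$, and is worth one sentence in a written-up version.
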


We need to refine Lemma~\ref{th:tightness_old} for our purposes, since we equip $\mathcal P_{d_Y}^t(Y)$ with the $\mathcal W_t$-topology instead of the weak topology.
\begin{lemma}\label{th:tightness_new}
	A set $\mathcal A\subseteq\mathcal P^t_{\mathcal W_t}(\mathcal P_{d_Y}^t(Y))$ is relatively compact if and only
	if the set of its intensities $I(\mathcal A)$ is relatively compact in $\mathcal P_{d_Y}^t(Y)$.
\end{lemma}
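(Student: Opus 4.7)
The lemma is the $\mathcal W_t$-refinement of Lemma~\ref{th:tightness_old}. I would follow the same overall structure---continuity of $I$ for one direction, an exhaustion argument for the other---while additionally keeping track of uniform $t$-moment estimates.

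\emph{Necessity.} I would first check that $I\colon \mathcal P^t_{\mathcal W_t}(\mathcal P_{d_Y}^t(Y))\to \mathcal P_{d_Y}^t(Y)$ is continuous. Testing against $f\in C_b(Y)$ gives weak convergence $I(P_n)\to I(P)$ whenever $P_n\to P$ in the top $\mathcal W_t$-topology, because $p\mapsto p(f)$ is weakly continuous. For the $t$-th moment, the identity
\[
\int d_Y(y,y_0)^t\, I(P)(dy) = E_P\bigl[\mathcal W_t(p,\delta_{y_0})^t\bigr]
\]
combined with the defining property of the top $\mathcal W_t$-topology yields moment convergence, hence $\mathcal W_t$-convergence on the base. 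Continuity then transports relative compactness from top to base.

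\emph{Sufficiency.} Assume $I(\mathcal A)$ is relatively compact in $\mathcal P_{d_Y}^t(Y)$, equivalently weakly tight in $\mathcal P(Y)$ and uniformly $t$-integrable: $\sup_{P\in\mathcal A}\int_{d_Y(y,y_0)>R} d_Y(y,y_0)^t\, I(P)(dy)\to 0$. To obtain tightness of $\mathcal A$ in the $\mathcal W_t$-topology of the base I would mimic Szulga's construction by picking, for a given $\varepsilon>0$, compact $K_n\subseteq Y$ with $\sup_{P\in\mathcal A} I(P)(Y\setminus K_n)\leq \varepsilon/(n\, 2^{n+1})$ and radii $M_k>0$ with $\sup_{P\in\mathcal A}\int_{d_Y>M_k}d_Y(y,y_0)^t\, dI(P)(y)\leq \varepsilon/(k\, 2^{k+1})$, and setting
\[
\tilde K_\varepsilon := \Bigl\{ p\in\mathcal P_{d_Y}^t(Y)\,:\, p(Y\setminus K_n)\leq 1/n \text{ and } \int_{d_Y>M_k} d_Y(y,y_0)^t\, p(dy)\leq 1/k \text{ for all } n,k\Bigr\}.
\]
The first family of constraints enforces weak tightness and the second uniform $t$-tails, so the $\mathcal W_t$-closure of $\tilde K_\varepsilon$ is compact in $\mathcal P_{d_Y}^t(Y)$. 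Applying Markov's inequality to each defining inequality together with a union bound gives $P(\tilde K_\varepsilon)\geq 1-\varepsilon$ uniformly in $P\in\mathcal A$.

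\emph{Upgrade and main obstacle.} To promote base-level tightness to relative compactness in $\mathcal P^t_{\mathcal W_t}(\mathcal P_{d_Y}^t(Y))$ I still need uniform integrability of the top-level $t$-th moment, $\sup_{P\in\mathcal A} E_P[g\,\mathbf 1_{\{g>R^t\}}]\to 0$ with $g(p):=\mathcal W_t(p,\delta_{y_0})^t = \int d_Y(y,y_0)^t\, p(dy)$. This is the subtle point: the top-level tail $E_P[g\,\mathbf 1_{\{g>R^t\}}]$ should not be confused with the base-level tail $\int_{d_Y>R}d_Y^t\, dI(P)$, and only the latter is directly controlled. To relate them I would split $g = g_1+g_2$ with $g_1(p)=\int_{d_Y\leq R/2} d_Y^t\, dp$ and $g_2(p)=\int_{d_Y>R/2} d_Y^t\, dp$; since $g_1\leq (R/2)^t$, the event $\{g>R^t\}$ is contained in $\{g_2>R^t(1-2^{-t})\}$, so Markov's inequality yields
\[
E_P\bigl[g\,\mathbf 1_{\{g>R^t\}}\bigr] \leq \tfrac{2^t}{2^t-1}\, E_P[g_2] = \tfrac{2^t}{2^t-1}\int_{d_Y(y,y_0)>R/2} d_Y(y,y_0)^t\, I(P)(dy),
\]
which vanishes uniformly in $P\in\mathcal A$ by hypothesis.
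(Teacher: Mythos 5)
Your proposal is correct and follows essentially the same route as the paper: necessity via continuity of $I$, and sufficiency by combining tightness of $\mathcal A$ in $\mathcal P(\mathcal P_{d_Y}^t(Y))$ (through a compact set of measures cut out by Markov-type tail constraints) with a uniform bound on the top-level moment tails $\sup_{P\in\mathcal A}\int_{\{\mathcal W_t(p,\delta_{y_0})^t\geq R\}}\mathcal W_t(p,\delta_{y_0})^t\,P(dp)$, finishing with the compactness criterion of Lemma~\ref{lem:wasserstein_relcomp} applied to the Polish space $(\mathcal P_{d_Y}^t(Y),\mathcal W_t)$. The only differences are cosmetic: you reprove the weak-tightness step inline instead of citing Lemma~\ref{th:tightness_old}, and your truncation estimate $g\leq \tfrac{2^t}{2^t-1}g_2$ on $\{g>R^t\}$ replaces, but is equivalent in substance to, the paper's split into $B_r(y')$ and $B_r(y')^c$ combined with Markov's inequality.
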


The proof of Lemma~\ref{th:tightness_new} heavily relies on the following lemma, for which we 
include a proof for sake of completeness.
\begin{lemma}\label{lem:wasserstein_relcomp}
	A set $\mathcal A\subseteq \mathcal P_{d_Y}^t(Y)$ is relatively compact if and only if it is
	tight and 
	\begin{align}\label{eq:wasserstein_relcomp}
	\exists y'\in Y\,\forall \epsilon > 0 ~\exists R>0\colon\sup_{\mu\in \mathcal A}\int_{B_R(y')^c} d_Y(y,y')^t\mu( dy) < \epsilon.
	\end{align}
\end{lemma}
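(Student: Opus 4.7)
Both directions hinge on the standard characterization that $\mathcal W_t$-convergence is equivalent to weak convergence together with convergence of $t$-th moments (equivalently, uniform integrability of $d_Y(\cdot,y')^t$ along the sequence), which holds for any reference point $y' \in Y$. I would take this characterization as granted (it is e.g.\ in \cite[Thm.\ 7.12]{Vi03}) and organize the proof in two implications.

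\textbf{Necessity.} Assume $\mathcal A$ is relatively compact in $\mathcal W_t$. Since $\mathcal W_t$ metrizes a topology finer than the weak topology, $\mathcal A$ is also relatively compact in the weak topology, hence tight by Prokhorov. For \eqref{eq:wasserstein_relcomp}, fix any $y' \in Y$ and suppose for contradiction that there exists $\epsilon>0$, sequences $R_n \uparrow \infty$ and $\mu_n \in \mathcal A$ with $\int_{B_{R_n}(y')^c} d_Y(y,y')^t\, \mu_n(dy) \ge \epsilon$. By relative compactness, extract a subsequence (not relabeled) with $\mu_n \to \mu$ in $\mathcal W_t$; then $\int d_Y(y,y')^t\, \mu_n(dy) \to \int d_Y(y,y')^t\, \mu(dy) < \infty$, so the family $\{d_Y(\cdot,y')^t\}$ is uniformly integrable along $(\mu_n)$, contradicting the tail bound.

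\textbf{Sufficiency.} Assume $\mathcal A$ is tight and satisfies \eqref{eq:wasserstein_relcomp} for some $y' \in Y$. Given any sequence $(\mu_n) \subset \mathcal A$, by tightness and Prokhorov there is a subsequence (again not relabeled) with $\mu_n \to \mu$ weakly for some $\mu \in \mathcal P(Y)$. Applying Fatou to $y \mapsto d_Y(y,y')^t \wedge M$ and letting $M \to \infty$, together with \eqref{eq:wasserstein_relcomp}, yields $\mu \in \mathcal P_{d_Y}^t(Y)$. To upgrade to $\mathcal W_t$-convergence, it suffices to show $\int d_Y(y,y')^t\,\mu_n(dy) \to \int d_Y(y,y')^t\,\mu(dy)$. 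The lower bound follows from weak convergence and Fatou. For the upper bound, given $\eta>0$, pick $R$ large (via \eqref{eq:wasserstein_relcomp}) so that $\int_{B_R(y')^c} d_Y(y,y')^t\, \mu_n(dy) < \eta$ uniformly in $n$ and also $\int_{B_R(y')^c} d_Y(y,y')^t\, \mu(dy) < \eta$; then truncate $d_Y(\cdot,y')^t$ against a continuous bounded cutoff supported in $B_{R}(y')$, apply weak convergence, and let $\eta \to 0$.

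\textbf{Main obstacle.} There is no serious obstacle; the only point requiring minor care is justifying the $t$-th moment bound on the limit $\mu$ from \eqref{eq:wasserstein_relcomp} and then cleanly splitting the integrals into the ball $B_R(y')$ (where a standard cutoff and weak convergence apply) and its complement (where \eqref{eq:wasserstein_relcomp} supplies a uniform tail estimate). Both pieces rely on the fact that $d_Y(\cdot,y')^t$ is continuous but unbounded, which is the sole technical subtlety.
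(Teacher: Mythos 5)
Your proof is correct and follows essentially the same route as the paper's: tightness from Prokhorov via the finer topology plus a contradiction argument using the characterization of $\mathcal W_t$-convergence for the tail bound, and for the converse a weak accumulation point, a truncation argument giving the finite $t$-moment, and a ball/complement splitting of the moment integral (your continuous cutoff plays the role of the paper's $R^t\wedge d_Y(\cdot,y')^t$) to get moment convergence and hence $\mathcal W_t$-convergence. No substantive differences worth noting.
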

Note that if \eqref{eq:wasserstein_relcomp} holds for some $y'\in Y$ it automatically holds for any $y' \in Y$.

\begin{proof}[Proof of Lemma~\ref{th:tightness_new}]
%The first implication follows by continuity of $I$ and Lemma~\ref{th:tightness_old} provides tightness.
{\color{black} Since continuous maps preserve relative compactness in Hausdorff spaces, the first implication follows by continuity of $I$. To show the reverse implication, let $I(\mathcal A)$ be} relatively compact in $\mathcal P_{d_Y}^t(Y)$. First we show for fixed $y'\in Y$ that
\begin{align}\label{eq:JulioYellow}\forall \epsilon>0\, \exists R_\epsilon > 0 \colon \sup_{P\in\mathcal A} \int_{\{p\colon \mathcal W_t(p,\delta_{y'})^t \geq 
R_\epsilon\}} \mathcal W_t(p,\delta_{y'})^tP( dp) <\epsilon.\end{align} 
Fix $\epsilon > 0$. There exist $K>0$ and $r>0$ such that for all $P\in \mathcal A$
\begin{align} \int_{\mathcal P^t_{d_Y}(Y)} \mathcal W_t(p,\delta_{y'})^tP( dp)&=\int_Y d_Y(y,y')^tI(P)( dy)\leq K \notag \\
\int_{\mathcal P^t_{d_Y}(Y)} \int_{B_r(y')^c}d_Y(y,y')^tp( dy)P( dp) & = \int_{B_r(y')^c} d_Y(y,y')^tI(P)( dy) < \frac{\epsilon}{2}, \label{eq:lemma moment estimate1}
\end{align}
where $B_r(y') = \{y\in Y \colon d_Y(y,y') < r\}$.
Set $R_\epsilon = \frac{2r^tK}{\epsilon}$ and $A_{R_\epsilon}= \left\{p\in\mathcal P_{d_Y}^t(Y)\colon\mathcal W_t(p,\delta_{y'})^t\geq R_\epsilon\right\}$, then
$$\sup_{P\in\mathcal A} P(A_{R_\epsilon}) \leq \sup_{P\in\mathcal A}\frac{1}{R_\epsilon} \int_{A_{R_\epsilon}} \mathcal W_t(p,\delta_{y'})^tP(dp)\leq \frac{K}{R_\epsilon}$$
and
\begin{align}\label{eq:lemma moment estimate2}
\sup_{P\in \mathcal A} \int_{A_{R_\epsilon}} \int_{B_r(y')}d_Y(y,y')^t p( dy)P(dp)\leq \sup_{P\in\mathcal A}P(A_{R_\epsilon}) r^t \leq \frac{\epsilon}{2}.
\end{align}
Putting \eqref{eq:lemma moment estimate1} and \eqref{eq:lemma moment estimate2} together shows \eqref{eq:JulioYellow}. 

It remains to show that $\mathcal A$ is tight in $\mathcal P(\mathcal P_{d_Y}^t(Y))$. By Lemma~\ref{th:tightness_old} we have that $\mathcal A$ is tight in $\mathcal P(\mathcal P(Y))$, i.e., given $\epsilon > 0$ there is a compact set $K_\epsilon \subset \mathcal P(Y)$ such that for all $P\in \mathcal A$ we have $P(K_\epsilon)\geq 1 - \epsilon$. We will construct a set $\tilde K_\epsilon \subset K_\epsilon$ which is compact in $ \mathcal P_{d_Y}^t(Y)$ and satisfies $P(\tilde K_\epsilon)\geq 1 - 2\epsilon$ in $P\in \mathcal A$.   To this end,  take a sequence of  radii $(R_n)_{n\in\N}$ such that 
\begin{align*}
	\sup_{P \in \mathcal A} P\left(\left\{p\colon \int_{\{y \colon d_Y(y,y')^t > R_n\}} d_Y(y,y')^t p(dy) \geq \frac{1}{n} \right\}\right) < \frac{\epsilon}{2^n},
\end{align*}
which is possible since
\begin{align*}%\textstyle
	P\left(\left\{p \colon \int_{\{y \colon d_Y(y,y')^t > R_n\}} d_Y(y,y')^t p(dy) \geq \frac{1}{n} \right\}\right) \leq n \int_{\{y\colon d_Y(y,y')^t > R_n\}} d_Y(y,y')^t I(P)(dy),
\end{align*}
can be chosen sufficiently small, uniformly for $P\in\mathcal A$. The set
\begin{align*}
	\tilde K_\epsilon := \left\{p \in K_\epsilon\colon \mathcal \int_{\{y\colon d_Y(y,y')^t > R_n\}} d_Y(y,y')^t p(dy) \leq \frac{1}{n},\quad n\in\N \right\}
\end{align*}
is compact in $\mathcal P_{d_Y}^t(Y)$ (cf.\ Lemma \ref{lem:wasserstein_relcomp}).
Finally, given $P\in \mathcal A$ we obtain
\begin{align*}
	P(\tilde K_\epsilon) &\geq P(K_\epsilon) - \sum_n P\left(\left\{p\colon \int_{\{y\colon d_Y(y,y')^t > R_n\}} d_Y(y,y')p(dy) \geq \frac{1}{n}\right\}\right) \geq 1 - 2\epsilon
\end{align*}
as desired
\end{proof}
\medskip

\begin{proof}[Proof of Lemma~\ref{lem:wasserstein_relcomp}]~\par

	`$\Rightarrow$':
	Since the topology induced by $\mathcal W_t$ on $\mathcal P_{d_Y}^t(Y)$ is finer than the weak topology on 
	$\mathcal P_{d_Y}^t(Y)$, relative compactness in $\mathcal W_t$ implies relative
	compactness with respect to the weak topology. Therefore, Prokhorov's theorem yields tightness.
	 Suppose for contradiction that \eqref{eq:wasserstein_relcomp} fails, i.e. there exist $y' \in Y$ and $\epsilon > 0$ such that for all $N\in\N$ there is $\mu_N \in \mathcal A$ s.t. $$\int_{B_N(y')^c} d_Y(y,y')^t\mu_N(dy) \geq \epsilon.$$
	In particular, 
	\begin{align} \label{eq:contradiction relcomp}
	\lim_{R\to \infty} \liminf_N \int_{B_R(y')^c} d_Y(y',y)^t \mu_N(dy) \geq \epsilon.
	\end{align}	
	 Due to relative compactness we find for any sequence in $\mathcal A$ an accumulation point. Then, from the definition of $\mathcal W_t$-convergence, see \cite[Definition 6.8 $(iii)$]{Vi09}, we deduce
	 $$\lim_{R\to\infty} \liminf_N \int_{B_R(y')^c} d_Y(y',y)^t \mu_N(dy) = 0,$$
	 which contradicts \eqref{eq:contradiction relcomp}. Hence, \eqref{eq:wasserstein_relcomp} is satisfied.
	%Note that \eqref{eq:wasserstein_relcomp} follows immediately from the definition of
	%convergence in $\mathcal W_t$.
	
	\medskip
	
`$\Leftarrow$':	Let $\mathcal A$ be tight such that \eqref{eq:wasserstein_relcomp} holds. Then, any sequence $(\mu_k)_{k\in\mathbb{N}}\in \mathcal A^\mathbb{N}$ has an accumulation point $\mu\in\mathcal P(Y)$ with respect to the weak topology. Without loss of generality
	assume that $\mu_k \to \mu$ for $k\rightarrow \infty$.
	By monotone convergence %and the Portmanteau theorem, we have for $O\subset Y$ open
	\begin{align*}
	\int d_Y(y,y')^t\mu( dy) &=\lim_{R\rightarrow \infty} \int R\wedge d_Y(y,y')^t\mu( dy) \\ & \leq\lim_{R\rightarrow \infty}\liminf_{n\to \infty} \int R\wedge d_Y(y,y')^t\mu_n( dy)  \leq \sup_n \int  d_Y(y,y')^t\mu_n( dy).
	\end{align*}		
	Hence, by \eqref{eq:wasserstein_relcomp}
	we can choose (for $\epsilon=1 $, say)  $R>0$ such that
	$$\int_Y d_Y(y,y')^t \mu( dy) \leq \sup_n \int_{B_R(y')} d_Y(y,y')^t \mu_n( dy)
	+ 1 < \infty,$$
	which shows that $\mu\in\mathcal P_{d_Y}^t(Y)$.

	Next, fix $\epsilon>0$. Pick $R>0$  such that
	\begin{align*}\int_Y d_Y(y,y')^t-R^t\wedge d_Y(y,y')^t\mu( dy)& <\eps,   \\ \ 
	\sup_{n}\int_{B_R(y')^c} d_Y(y,y')^t\mu_n( dy) & < \epsilon.
	\end{align*}
	By weak convergence we know that 	$$\lim_k \int_Y R^t\wedge d_Y(y,y')^t \mu_k( dy)\rightarrow 
	\int_Y R^t\wedge d_Y(y,y')^t\mu( dy).$$
Hence we may pick $k_0$ such that for all $k \geq k_0$ 
	$$\Big|\int_Y R^t\wedge d_Y(y,y')^t(\mu_k-\mu)(dy)\Big|<{\epsilon}.$$
	Thus we have for $k\geq k_0$ 
	$$\Big|\int_Y d_Y(y,y')^t(\mu_k - \mu)(dy)\Big| < 3 \epsilon.$$
	Since $\epsilon$ was arbitrary, we obtain that the $t$-moments are converging, which implies
	convergence in $\mathcal W_t$.
\end{proof}

We recall that on $Y$ we are usually given a compatible complete metric $d_Y$,  whereas on $X$ we fix a compatible bounded metric $d_X$. We thus endow the product spaces $X\times Y$ and $X\times \mathcal P_{d_Y}^t(Y)$ with natural (product) metrices $d$ and $\hat d$ defined respectively by
\begin{align}\label{eq:def metric XxY}
	d((x,y),(x_0,y_0)) &= d_X(x,x_0) + d_Y(y,y_0),\\ \label{eq:def metric XxPY}
	\hat d((x,p),(x_0,p_0)) &= d_X(x,x_0) + \mathcal W_t(p,p_0).
\end{align}
We can now state and prove the crucial property of $J$:

\begin{lemma}\label{lem:relcompembedding}
If $\Pi\subseteq \mathcal P_{d}^t(X\times Y)$ is relatively compact then $J(\Pi)\subseteq \mathcal P^t_{\hat d}(X\times \mathcal P_{d_Y}^t(Y))$ is relatively compact. {\color{black}Conversely, if $\Lambda \in \mathcal P^t_{\hat d}(X\times \mathcal P_{d_Y}^t(Y))$ is relatively compact then $\hat I(\Lambda)\subseteq \mathcal P^t_d(X\times Y)$ is relatively compact.}
\end{lemma}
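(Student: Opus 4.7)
The plan is to prove both implications by reducing $\mathcal W_t$-relative compactness on the product space to $\mathcal W_t$-relative compactness of each marginal separately, and then applying the two characterizations already proved, namely Lemma~\ref{lem:wasserstein_relcomp} for the Wasserstein space and Lemma~\ref{th:tightness_new} for probabilities over a Wasserstein space.

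The crucial preliminary observation is that, because $d_X$ is bounded by some constant $M$, $\mathcal W_t$-relative compactness on $\mathcal P(X)$ is equivalent to tightness, and the $X$-coordinate contributes in a uniformly bounded way to the product metric. More precisely, for $t \geq 1$ the elementary inequalities
\[ d_X(x,x_0)^t + d_Y(y,y_0)^t \;\leq\; d((x,y),(x_0,y_0))^t \;\leq\; 2^{t-1}\bigl(d_X(x,x_0)^t + d_Y(y,y_0)^t\bigr), \]
together with the analogous bounds for $\hat d$, show that the $t$-th moment on the product decomposes (up to constants) into the $t$-th moments of the two coordinates. Combined with the inclusion $B_R((x_0,y_0))^c \subset \{y : d_Y(y,y_0) > R/2\}$ valid whenever $R > 2M$, this allows one to transfer both tightness and the tail moment condition \eqref{eq:wasserstein_relcomp} between joints and marginals in either product space.

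For the forward direction, assume $\Pi$ is relatively compact in $\mathcal P^t_d(X\times Y)$. By Lemma~\ref{lem:wasserstein_relcomp} and the preliminary observation, the $Y$-marginals $\{\pi(X \times \cdot) : \pi \in \Pi\}$ are tight and satisfy \eqref{eq:wasserstein_relcomp}, hence are relatively compact in $\mathcal P^t_{d_Y}(Y)$. These are exactly the intensities of $\mathrm{proj}_{\mathcal P(Y)} J(\pi)$, so Lemma~\ref{th:tightness_new} implies that $\{\mathrm{proj}_{\mathcal P(Y)} J(\pi) : \pi \in \Pi\}$ is relatively compact in $\mathcal P^t_{\mathcal W_t}(\mathcal P^t_{d_Y}(Y))$. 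Since the $X$-marginal of $J(\pi)$ equals that of $\pi$ and $d_X$ is bounded, the preliminary observation—now applied in $X \times \mathcal P^t_{d_Y}(Y)$ endowed with $\hat d$—upgrades marginal $\mathcal W_t$-relative compactness to joint $\mathcal W_t$-relative compactness, proving that $J(\Pi)$ is relatively compact in $\mathcal P^t_{\hat d}(X \times \mathcal P^t_{d_Y}(Y))$.

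For the reverse direction, assume $\Lambda$ is relatively compact in $\mathcal P^t_{\hat d}(X \times \mathcal P^t_{d_Y}(Y))$. Continuity of the projections (together with the preliminary observation applied to $\hat d$) gives that $\{\mathrm{proj}_{\mathcal P(Y)} P : P \in \Lambda\}$ is relatively compact in $\mathcal P^t_{\mathcal W_t}(\mathcal P^t_{d_Y}(Y))$. Lemma~\ref{th:tightness_new} then identifies their intensities as a relatively compact subset of $\mathcal P^t_{d_Y}(Y)$, and by \eqref{eq:def intensity2} these intensities are precisely the $Y$-marginals of the members of $\hat I(\Lambda)$. Together with the fact that the $X$-marginals of $\hat I(\Lambda)$ coincide with those of $\Lambda$, a final application of the preliminary observation yields that $\hat I(\Lambda)$ is relatively compact in $\mathcal P^t_d(X \times Y)$. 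The main technical obstacle lies in the preliminary observation itself, i.e.\ verifying that the tail moment condition of Lemma~\ref{lem:wasserstein_relcomp} decouples across the factors of a product space; once this is pinned down, both implications follow from the already established characterizations and the boundedness of $d_X$.
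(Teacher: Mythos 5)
Your proof is correct and follows essentially the same route as the paper: relative compactness is checked marginal by marginal, Lemma~\ref{th:tightness_new} transfers it between the $\mathcal P(Y)$-marginals of $J(\Pi)$ and their intensities (which are the $Y$-marginals of the elements of $\Pi$), and the boundedness of $d_X$ lets one pass from relatively compact marginals back to a relatively compact set of joint laws, exactly the step the paper leaves implicit and you spell out. The only difference is cosmetic: for the converse implication the paper just invokes continuity of $\hat I$ (continuous maps preserve relative compactness in Hausdorff spaces), whereas you re-run the marginal decomposition -- both are valid.
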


\begin{proof}
{\color{black}Since continuous maps preserve relative compactness in Hausdorff spaces, we immediately deduce relative compactness of $\hat I(\Lambda)$, and the sets $\Pi^X\subset \mathcal P(X)$ and $\Pi^Y\subset \mathcal P_{d_Y}^t(Y)$ consisting respectively of the $X$- and $Y$-marginals of the elements in $\Pi$.}
%By continuous mapping (see \cite[Theorem A.3.10]{DuEl11}) the sets $\Pi^X\subset \mathcal P(X)$ and $\Pi^Y\subset \mathcal P_{d_Y}^t(Y)$, consisting respectively of the $X$- and $Y$-marginals of the elements in $\Pi$, are tight. Then, relative compactness of $\Pi^X$ and $\Pi^Y$ can be readily derived by courtesy of Lemma~\ref{lem:wasserstein_relcomp} and the structure of the product metric $d$, cf.~\eqref{eq:def metric XxY}.

Denote now respectively by $\Pi_J^X\subset \mathcal P(X)$ and $\Pi_J^Y\subset \mathcal P_{\mathcal W_t}^t(\mathcal P_{d_Y}^t(Y))$ the set of $X$- and $\mathcal P(Y)$-marginals of the elements in $J(\Pi)$. Clearly $\Pi_J^X=\Pi^X$. By Lemma~\ref{th:tightness_new}, the set $\Pi_J^Y$ is relatively compact in $\mathcal P_{\mathcal W_t}^t(\mathcal P_{d_Y}^t(Y))$ if and only if the set {\color{black}$I(\Pi_J^Y)$} is relatively compact in $\mathcal P_{d_Y}^t(Y)$. However, if $m$ is equal to the $\mathcal P(Y)$-marginal of $J(\pi)$, then  $I(m)$ is equal to the $Y$-marginal of $\pi$. It follows that $I(\Pi_J^Y)\subset \Pi^Y$ is relatively compact and so is $\Pi_J^Y$. Since the marginals of $J(\Pi)$ are relatively compact, we conclude that $J(\Pi)$ itself is relatively compact.
\end{proof}

It is convenient to introduce the following assumptions, which we will often require: 

\begin{definition}[\textsc{A}]\label{def:conditionA}
	Given Polish spaces $X$, $Y$, we say that a function 
	$$C\colon X\times\mathcal P_{d_Y}^t(Y)	\rightarrow \mathbb{R}\cup\{+\infty\}$$ satisfies Condition~\probref{def:conditionA} if the following hold:
	\begin{itemize}
		\item  $C$ is lower semicontinuous with respect to the product topology of $$(X,d_X) \times (\mathcal P^t_{d_Y}(Y), \mathcal W_t),$$
		\item $C$ is bounded from below.
	\end{itemize}
	If in addition for all $x\in X$ the map $p \mapsto C(x,p)$ is convex, i.e.
	\begin{align}\label{eq:convex}
		p,q\in \mathcal P_{d_Y}^t(Y),\alpha \in [0,1]\Rightarrow \,\,C(x,\alpha p + (1-\alpha) q) \leq \alpha C(x,p) + (1-\alpha) C(x,q),
	\end{align}
		then we say that $C$ satisfies Condition~\textsc{(A+)}.
\end{definition}

We now show that under Condition~\textsc{(A+)} the cost functional defining the weak transport problem is lower semicontinuous:

\begin{proposition}\label{th:gozlanlsc}
	Let $C\colon X\times \mathcal P_{d_Y}^t(Y)\rightarrow \mathbb{R}\cup\{+\infty\}$ satisfy condition~\probref{def:conditionA}. Then the map
	\begin{align}\label{eq:lofty cost map}
	\mathcal P_{\hat d}^{t}(X\times \mathcal P_{d_Y}^t(Y))\ni P \mapsto \int_{X\times \mathcal P_{d_Y}^t(Y)} C(x,p)P( dx,dp)
	\end{align}
	is lower semicontinuous. If $C$ satisfies condition \textsc{(A+)} then the map
	\begin{align}\label{eq:cost map}
	\mathcal P_d^t(X\times Y)\ni \pi \mapsto \int_X C(x,\pi_x)\pi( dx\times Y)
	\end{align}is lower semicontinuous.
\end{proposition}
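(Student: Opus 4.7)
The first assertion is essentially Portmanteau-type and I would treat it as the easy warm-up. Let $P^n \to P$ in $\mathcal P^t_{\hat d}(X \times \mathcal P^t_{d_Y}(Y))$. Since the $\mathcal W_t$-topology is strictly finer than the weak topology, $P^n \to P$ also weakly. Because $C$ is jointly lower semicontinuous on $X \times \mathcal P^t_{d_Y}(Y)$ (equipped with the very topology that defines $\hat d$) and bounded below, the standard Portmanteau characterization (or the classical fact that integrals of lower semicontinuous, lower bounded functions are lsc under weak convergence) yields
\[
\int C(x,p)\, P(dx,dp) \le \liminf_n \int C(x,p)\, P^n(dx,dp),
\]
which is the claim for the map in \eqref{eq:lofty cost map}.

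For the second assertion the natural candidate is to push everything through the embedding $J$. Given $\pi^n \to \pi$ in $\mathcal P^t_d(X\times Y)$, one would like to write $\int C(x,\pi^n_x)\,\pi^n(dx \times Y) = \int C(x,p)\, J(\pi^n)(dx,dp)$ and invoke the first part. The genuine obstacle is that $J$ is not continuous, as underlined in Example~\ref{ex:J discontinuous}, so $J(\pi^n)$ need not converge to $J(\pi)$. The remedy is the relative compactness afforded by Lemma~\ref{lem:relcompembedding}: since $\{\pi^n\}\cup\{\pi\}$ is relatively compact in $\mathcal P^t_d(X\times Y)$, the set $\{J(\pi^n)\}\cup\{J(\pi)\}$ is relatively compact in $\mathcal P^t_{\hat d}(X\times \mathcal P^t_{d_Y}(Y))$.

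Thus, after passing to a subsequence along which the $\liminf$ of the integrals is attained, I can extract a further subsequence so that $J(\pi^{n_k}) \to P$ for some $P \in \mathcal P^t_{\hat d}(X\times \mathcal P^t_{d_Y}(Y))$. By continuity of $\hat I$ and the identity $\hat I \circ J = \id$, we obtain $\hat I(P) = \pi$. Writing $\mu = \pi(dx \times Y)$ and disintegrating $P = \mu(dx)\, P_x(dp)$, the argument in the proof of Lemma~\ref{lem:convex optimality equivalence} gives $\pi_x = \int p\, P_x(dp)$ for $\mu$-a.e.\ $x$. Now convexity of $C(x,\cdot)$ together with Jensen's inequality for the barycenter of $P_x$ yields
\[
C(x,\pi_x) = C\!\left(x,\int p\, P_x(dp)\right) \le \int C(x,p)\, P_x(dp),
\]
so integrating in $x$ produces $\int C(x,\pi_x)\,\mu(dx) \le \int C(x,p)\, P(dx,dp)$.

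Combining with the first part of the proposition, applied to the convergent subsequence $J(\pi^{n_k}) \to P$, gives
\[
\int_X C(x,\pi_x)\, \mu(dx) \le \int C(x,p)\, P(dx,dp) \le \liminf_k \int C(x,p)\, J(\pi^{n_k})(dx,dp) = \liminf_k \int_X C(x,\pi^{n_k}_x)\, \pi^{n_k}(dx\times Y),
\]
which, by the standard subsequence argument, establishes the lower semicontinuity of \eqref{eq:cost map}. The principal subtlety in the argument is precisely the passage through $J$ in spite of its discontinuity, and this is handled cleanly by compactness plus the continuity of the left-inverse $\hat I$.
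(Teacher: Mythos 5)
Your proof is correct and follows essentially the same route as the paper: lower semicontinuity of \eqref{eq:lofty cost map} via the standard lsc-plus-lower-bound argument under weak convergence on $(X\times\mathcal P_{d_Y}^t(Y),\hat d)$, and then for \eqref{eq:cost map} the passage through $J$, relative compactness from Lemma~\ref{lem:relcompembedding}, identification of the limit's barycentric disintegration with $\pi_x$ (the paper does this by testing against $f\in C_b(X\times Y)$, which amounts to your use of the continuity of $\hat I$), and convexity of $C(x,\cdot)$ to conclude. No gaps.
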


\begin{proof}
Let $P^k\to P$ in $\mathcal P_{\hat d}^t(X\times \mathcal P_{d_Y}^t(Y))$.
Similar to \cite[Theorem A.3.12]{DuEl11}, we can approximate $C$ from below by $d$-Lipschitz functions and obtain lower semicontinuity of \eqref{eq:lofty cost map}, i.e.,
	$$\liminf_k \int_{X\times \mathcal P(Y)}C(x,p)\,P^k( dx, dp)\geq \int_{X\times \mathcal P(Y)}C(x,p)\,P( dx, dp).$$

To show lower semicontinuity of \eqref{eq:cost map}, let $\pi^k\to \pi$ in $\mathcal P_d^t(X\times Y)$ and denote $P^k=J(\pi^k)$. We may assume that $\liminf_k \int_X C(x,\pi_x^k)\pi^k( dx\times Y) = \lim_k \int_X C(x,\pi_x^k)\pi^k( dx\times Y) $ by selecting a subsequence. By Lemma~\ref{lem:relcompembedding} we know that $\{P^k\}_k$ is relatively compact in $\mathcal P_{\hat d}^{t}(X\times \mathcal P_{d_Y}^t(Y))$. Denote by $P$ an accumulation point of $\{P^k\}_k$. From now on we work along a subsequence converging to $P$. Observe that $$\int_X C(x,\pi_x^k)\pi^k( dx\times Y) = \int_{X\times \mathcal P(Y)}C(x,p)\,P^k(dx, dp).$$
Hence, we find by the first part that
	$$\liminf_k \int_{X\times \mathcal P(Y)}C(x,p)\,P^k( dx, dp)\geq \int_{X\times \mathcal P(Y)}C(x,p)\,P( dx, dp).$$

	Observe that the $X$-marginal of $P$ equals the $X$-marginal of $\pi$, so by convexity of $C(x,\cdot)$ we then have 
	\begin{align*}
	\liminf_k \int_X C(x,\pi_x^k)\pi^k( dx\times Y)&\geq \int_{X\times \mathcal P(Y)}C(x,p)\,P_x( dp)\pi( dx\times Y)\\ &\geq  \int_X C\Big(x,\int_{\mathcal P(Y)}p( dy) P_x( dp)\Big)\pi( dx\times Y).
	\end{align*}
	Now, if $f$ is continuous bounded on $X\times Y$, we have 
	$$\int_{X\times Y} f(x,y)\pi^k( dx, dy)\to \int_{X\times Y} f(x,y){\color{black}\pi}( dx, dy).$$
	 But the function $F(x,p):=\int_Y f(x,y)p( dy)$ is easily seen to be continuous and bounded in $X\times \mathcal P(Y)$. Hence $\int F dP^k \to \int F dP$ and by the structure of $F$ we deduce $$\int_{X\times Y} f(x,y)\pi( dx, dy) = \int F dP = \int_{X\times \mathcal P(Y)}\int_Y f(x,y)p(dy)P( dx, dp). $$
	This shows for the disintegration $(\pi_x)_{x\in X}$ of $\pi$ that $\pi_x( dy)= \int_{\mathcal P(Y)}p( dy)\, P_x( dp)$ for $\pi( dx\times Y)$-almost every $x$. So we conclude
	$$\liminf_k \int_X C(x,\pi_x^k)\pi^k( dx\times Y)\geq \int_X C(x,\pi_x)\pi( dx\times Y).$$
\end{proof}
We are finally ready to provide our main existence result:

\begin{theorem}\label{thm existence compactset}
	Let $C\colon X\times \mathcal P_{d_Y}^t(Y)\rightarrow \mathbb{R}\cup\{+\infty\}$ satisfy Condition~\probref{def:conditionA}. 
	If $\Lambda \subset \mathcal P_{\hat d}^t(X\times \mathcal P_{d_Y}^t(Y))$ is compact, then there exists a minimizer $P^*\in\Lambda$ of
	$$\inf_{P\in\Lambda} \int_{X\times \mathcal P(Y)} C(x,p)P(dx,dp).$$
In particular $\mathcal P (X)\times \mathcal P^t_{d_Y}(Y)\ni(\mu,\nu)\mapsto \hat V(\mu,\nu)$	is lower semicontinuous and  $\hat V(\mu,\nu)$ is attained (recall \eqref{eq:defhatV}). Assume now that $C$ fulfils Condition~\textsc{(A+)} and $\Pi \subseteq \mathcal P_{d}^{t}(X\times Y)$ is compact. Then there exists a minimizer $\pi^*\in\Pi$ of
	$$\inf_{\pi\in\Pi} \int_X C(x,\pi_x)\pi( dx\times Y).$$
	In particular $\mathcal P (X)\times \mathcal P^t_{d_Y}(Y)\ni(\mu,\nu)\mapsto  V(\mu,\nu)$ is lower semicontinuous and $V(\mu,\nu)$ is attained (recall \eqref{eq weak transport def}).

\end{theorem}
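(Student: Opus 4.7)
My plan is to combine the lower semicontinuity supplied by Proposition~\ref{th:gozlanlsc} with the classical Weierstrass extreme value theorem, and then bootstrap to the parametric $(\mu,\nu)$ statements by identifying the right compact sets. For the first claim, Proposition~\ref{th:gozlanlsc} gives lower semicontinuity of $P\mapsto \int C\,dP$ on $\mathcal P^t_{\hat d}(X\times \mathcal P^t_{d_Y}(Y))$ and $C$ is bounded below by assumption, so on the compact set $\Lambda$ the infimum is attained. The analogous application of the second part of Proposition~\ref{th:gozlanlsc} settles existence on a compact $\Pi\subset \mathcal P^t_d(X\times Y)$ under Condition~(A+).

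For the ``in particular'' assertion about $\hat V$, I will first check that $\Lambda(\mu,\nu)$ is itself compact in $\mathcal P^t_{\hat d}(X\times \mathcal P^t_{d_Y}(Y))$: its $X$-marginals form the singleton $\{\mu\}$; its $\mathcal P^t_{d_Y}(Y)$-marginals all have intensity $\nu$, so by Lemma~\ref{th:tightness_new} they form a relatively compact subset of $\mathcal P^t_{\mathcal W_t}(\mathcal P^t_{d_Y}(Y))$; and closedness follows from continuity of $\proj_X$ and of $I\circ\proj_{\mathcal P(Y)}$. Attainment of $\hat V(\mu,\nu)$ then follows from the first step. Lower semicontinuity of $\hat V$ in $(\mu,\nu)$ is a direct diagonal argument: given $(\mu_n,\nu_n)\to(\mu,\nu)$ in $\mathcal P(X)\times\mathcal P^t_{d_Y}(Y)$ and almost-optimizers $P_n\in\Lambda(\mu_n,\nu_n)$, the same compactness reasoning (now applied to the relatively compact $X$-marginals $\{\mu_n\}\cup\{\mu\}$ and, through Lemma~\ref{th:tightness_new}, to the set of $\mathcal P^t_{d_Y}(Y)$-marginals with intensities in the $\mathcal W_t$-relatively compact family $\{\nu_n\}\cup\{\nu\}$) shows that $\{P_n\}$ is relatively compact. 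Any accumulation point $P$ lies in $\Lambda(\mu,\nu)$, and Proposition~\ref{th:gozlanlsc} yields $\hat V(\mu,\nu)\le \int C\,dP\le \liminf_n \hat V(\mu_n,\nu_n)$.

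The corresponding claim for $V$ under Condition~(A+) follows most cleanly from Lemma~\ref{lem:convex optimality equivalence}, which gives $V=\hat V$: the minimizer $P^{*}\in\Lambda(\mu,\nu)$ constructed above yields $\pi^{*}:=\hat I(P^{*})\in\Pi(\mu,\nu)$, and the convexity computation inside the proof of that lemma shows $\pi^{*}$ attains $V(\mu,\nu)$; lower semicontinuity in $(\mu,\nu)$ is inherited from $\hat V$. (Alternatively, one can argue directly on the $\mathcal W_t$-compact set $\Pi(\mu,\nu)\subset\mathcal P^t_d(X\times Y)$ and use the second half of Proposition~\ref{th:gozlanlsc}.) The main technical subtlety throughout is that compactness must be verified in the refined $\mathcal W_t$-topologies rather than the weak topologies on $\mathcal P(\mathcal P(Y))$ and $\mathcal P(X\times Y)$, which is precisely what Lemma~\ref{th:tightness_new} is designed to deliver; without this refinement the lower semicontinuity from Proposition~\ref{th:gozlanlsc} would not be available.
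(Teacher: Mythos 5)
Your proposal is correct and follows essentially the same route as the paper: lower semicontinuity from Proposition~\ref{th:gozlanlsc} plus compactness for existence, relative compactness of (almost-)optimizers via the marginal/intensity criterion of Lemma~\ref{th:tightness_new} for the lower semicontinuity of $\hat V$, and Lemma~\ref{lem:convex optimality equivalence} (or, equivalently, compactness of $\Pi(\mu,\nu)$ in $\mathcal P^t_d$) to transfer the statements to $V$. You merely spell out details the paper leaves implicit, such as the compactness and closedness of $\Lambda(\mu,\nu)$.
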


\begin{proof}
The existence of minimizers in $\Lambda$ and $\Pi$ are direct consequences of their compactness and the lower semicontinuity of the objective functionals (Proposition \ref{th:gozlanlsc}).

We move to the study of $\hat V$. Let $(\mu_k,\nu_k)\rightarrow (\mu,\nu)$ in $\mathcal P(X) \times (\mathcal P_{d_Y}^t,\mathcal W_t)$. For any $k\in\mathbb{N}$ we find an optimizer $P^*_k$ of $\hat V(\mu_k,\nu_k)$. Note that the set $\{P_k^*\colon k\in\mathbb{N}\}$ is relatively compact in $\mathcal P_{\hat d}^{t}(X\times \mathcal P_{d_Y}^t(Y))$. Therefore, we can find again a converging subsequence with limit point in $\Pi(\mu,\nu)$. Without loss of generality we assume
$$\liminf_k \hat V(\mu_k,\nu_k) = \lim_k \hat V(\mu_k,\nu_k).$$
Using lower semicontinuity of the objective functional shows the assertion for $\hat V$. By Lemma~\ref{lem:convex optimality equivalence} the lower semicontinuity of $V$ is immediate.
\end{proof}

Of course Theorems \ref{thm existence general} and \ref{thm existence marginals} are particular cases of the second half of Theorem \ref{thm existence compactset}. More generally: if $A$ is compact in $\mathcal P(X)$ and $B$ is compact in $(\mathcal P^t_{d_Y}(Y),\mathcal W_t)$, then $\Pi:=\bigcup_{\mu\in A,\nu\in B}\Pi(\mu,\nu)$ is compact in $\mathcal P_{d}^{t}(X\times Y)$ and Theorem \ref{thm existence compactset} applies.

\section{Duality}\label{sec:duality}

We denote by $\Phi_{t}$ the set of continuous functions on $Y$ which satisfy the growth constraint 
$$\exists y_0\in Y,\,\exists a,b\in\R_+,\,\forall y\in Y:\,|\psi(y)|\leq a + bd_Y(y,y_0)^t,$$
and by $\Phi_{b,t}$ the subset of functions in $\Phi_t$ which are bounded from below. Further, we recall the notion of $C$-conjugate :
The $C$-conjugate of a measurable function $\psi \colon Y\rightarrow \R$, denoted $R_C\psi$, is given by
\begin{align}\label{def:RCphi}
R_C\psi(x) := \inf_{p\in \mathcal P_{d_Y}^t(Y)} p(\psi) + C(x,p).
\end{align}
%Observe that alternatively
%$$R_C\psi(x) = \inf_{Q \in \mathcal P_{\mathcal W_t}^t(\mathcal P_{d_Y}^t(Y))} \int_{\mathcal P(Y)} p(\psi) + C(x,p) Q(dp).$$

We obtain Theorem \ref{th:kantorovichduality} as a particular case of the following:

\begin{theorem}\label{thm duality strong}
	Let $C\colon X \times \mathcal P_{d_Y}^t(Y)\rightarrow \mathbb{R}\cup\{+\infty\}$ satisfy Condition~\probref{def:conditionA}. Then
	\begin{align}\label{eq:lofty duality}
		\inf_{P\in\Lambda(\mu,\nu)} \int_{X\times \mathcal P(Y)} C(x,p) P(dx,dp) = \sup_{\psi \in \Phi_{b,t}} -\nu(\psi) + \int_X R_C\psi(x)\mu(dx).
	\end{align}
If moreover $C$ satisfies Condition~\textsc{(A+)}, then
	\begin{align}\label{eq:duality}
	V(\mu,\nu):=\inf_{\pi\in\Pi(\mu,\nu)} \int_X C(x,\pi_x) \mu(dx) = \sup_{\psi \in \Phi_{b,t}} -\nu(\psi) + \int_X R_C\psi(x)\mu(dx).
	\end{align}
\end{theorem}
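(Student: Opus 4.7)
The plan is to prove the lifted duality \eqref{eq:lofty duality} first and then transfer to \eqref{eq:duality} via Lemma \ref{lem:convex optimality equivalence}. The easy direction $\hat V(\mu,\nu) \geq \sup_{\psi \in \Phi_{b,t}} \int R_C\psi\,d\mu - \nu(\psi)$ is immediate: the definition of $R_C\psi$ yields the pointwise inequality $R_C\psi(x) - p(\psi) \leq C(x,p)$, and integrating against any $P \in \Lambda(\mu,\nu)$ together with $\proj_X P = \mu$ and $I(\proj_{\mathcal P(Y)} P) = \nu$ delivers $\int R_C\psi\,d\mu - \nu(\psi) \leq \int C\,dP$. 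The content of the theorem is the reverse inequality.

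For the non-trivial direction I would run a Lagrangian/minimax argument on the convex set $\mathcal M_\mu := \{P \in \mathcal P(X \times \mathcal P_{d_Y}^t(Y)) : \proj_X P = \mu\}$, with Lagrangian
\[
L(P,\psi) := \int C(x,p)\,P(dx,dp) + \int p(\psi)\,P(dx,dp) - \nu(\psi),\qquad (P,\psi) \in \mathcal M_\mu \times \Phi_{b,t}.
\]
Two observations drive the argument. First, for fixed $\psi$ one disintegrates $P = \mu \otimes P_x$ and obtains $\inf_{P \in \mathcal M_\mu} L(P,\psi) = \int R_C\psi\,d\mu - \nu(\psi)$ directly from the definition of $R_C\psi$. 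Second, for fixed $P$ one has $\sup_{\psi \in \Phi_{b,t}} L(P,\psi) = \int C\,dP$ whenever $P \in \Lambda(\mu,\nu)$ and $+\infty$ otherwise, because $\Phi_{b,t}$ contains all positive scalar multiples and translates of bounded continuous functions on $Y$ and therefore separates elements of $\mathcal P_{d_Y}^t(Y)$. Swapping $\inf_P$ and $\sup_\psi$ would complete the proof of \eqref{eq:lofty duality}.

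To justify the swap I would first reduce to the case of a bounded, $\hat d$-Lipschitz cost $C$, where either Sion's theorem or a Fenchel--Rockafellar argument applies cleanly: the constraint set $\mathcal M_\mu$, once restricted to measures with uniformly bounded $\mathcal W_t$-moments, is convex and compact thanks to Lemma \ref{lem:relcompembedding}; $\Phi_{b,t}$ is convex; and $L$ is affine in each variable with the semicontinuity supplied by Proposition \ref{th:gozlanlsc}. Once duality is established for such nice $C$, I would extend to a general cost satisfying Condition \probref{def:conditionA} by monotone approximation $C_n \uparrow C$ using bounded $\hat d$-Lipschitz functions (as in the proof of Proposition \ref{th:gozlanlsc}). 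The primal values $\hat V_{C_n}(\mu,\nu)$ increase to $\hat V_C(\mu,\nu)$ by monotone convergence together with Theorem \ref{thm existence compactset}, and the corresponding dual values increase monotonically, so one obtains \eqref{eq:lofty duality} in the limit. Finally, under Condition \textsc{(A+)} Lemma \ref{lem:convex optimality equivalence} gives $V = \hat V$, and the dual side is unaffected, hence \eqref{eq:duality}.

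The main obstacle I expect is the rigorous inf--sup swap under only lower semicontinuity and lower-boundedness of $C$, together with the restricted (non-reflexive) class of test functions $\Phi_{b,t}$. The growth condition built into $\Phi_{b,t}$ must be finely matched to the integrability provided by $\nu \in \mathcal P_{d_Y}^t(Y)$, and the interaction of the Lipschitz approximations $C_n$ with the infimum defining $R_{C_n}\psi$ needs care to ensure that the dual side passes correctly to the limit.
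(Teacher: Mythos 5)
Your outer scaffolding is sound: weak duality, the two partial optimizations of the Lagrangian, the monotone approximation $C_n\uparrow C$ at the end (which indeed only needs $D_n\le D\le \hat V_C$ plus compactness of $\Lambda(\mu,\nu)$), and the final passage from \eqref{eq:lofty duality} to \eqref{eq:duality} via Lemma~\ref{lem:convex optimality equivalence} all work. The genuine gap is the justification of the inf--sup exchange itself. Your compactness claim is false as stated: the subset of $\mathcal M_\mu$ consisting of measures with uniformly \emph{bounded} $\mathcal W_t$-moments is not compact in $\mathcal P^t_{\hat d}(X\times\mathcal P^t_{d_Y}(Y))$ --- by Lemmas~\ref{th:tightness_new} and \ref{lem:wasserstein_relcomp} relative compactness requires uniform \emph{integrability} of the $t$-th moments, not a uniform bound, and Lemma~\ref{lem:relcompembedding} only transports relative compactness through $J$ and $\hat I$; it does not produce it. Neither $\mathcal M_\mu$ nor $\Phi_{b,t}$ is compact, so Sion's theorem does not apply as sketched, and replacing moment balls by genuinely compact (uniformly integrable) classes forces you to let the class grow and to interchange that limit with $\sup_\psi$ --- which is exactly the analytic difficulty you were trying to dispose of with the minimax theorem, and your sketch does not address it. A second, smaller gap: the identity $\inf_{P\in\mathcal M_\mu}L(P,\psi)=\mu(R_C\psi)-\nu(\psi)$ is not purely ``by definition''; the inequality $\le$ needs an $\epsilon$-optimal \emph{measurable} kernel $x\mapsto p_x$ (a selection argument as in \cite[Proposition 7.50]{BeSh78}), together with a check that the resulting $P$ has a second marginal with finite $t$-moment so that all integrals make sense.

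For comparison, the paper sidesteps the compactness issue by a coercivity trick rather than a minimax argument: it replaces $C$ by $\widetilde C(x,p)=C(x,p)+\mathcal W_t(p,\delta_{y_0})^t$, considers $F(m)=\inf_{P\in\Lambda(\mu,m)}\int \widetilde C\,dP$ as a convex function of the target intensity $m$, extends it by $+\infty$ to the signed measures $\mathcal M^t_{d_Y}(Y)$ paired with $\Phi_t$, proves lower semicontinuity there via Proposition~\ref{th:gozlanlsc} and Theorem~\ref{thm existence compactset} (the coercive term supplies exactly the uniform integrability you are missing), and then applies Fenchel--Moreau biconjugation in the variable $m$; the identity $F^*(-\psi)=-\mu(R_{\widetilde C}\psi)$ is where the measurable selection enters, and the modification is removed at the end by the shift $\psi\mapsto\psi+d_Y(\cdot,y_0)^t$, which stays inside $\Phi_{b,t}$. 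If you want to salvage your Lagrangian route, you would have to build this coercive term (or an equivalent uniform-integrability device) into the restricted primal sets before invoking any minimax or Fenchel--Rockafellar theorem; as written, the swap is unjustified.
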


\begin{remark}
A proof of Theorem \ref{th:kantorovichduality} can be obtained by means of \cite[Theorem 9.6]{GoRoSaTe17}, since we may verify the hypotheses therein thanks to our Proposition \ref{th:gozlanlsc}. We prefer to obtain the slightly stronger Theorem \ref{thm duality strong} via self-contained arguments. The primal-dual equality \eqref{eq:duality} was obtained in \cite[Theorem 4.2]{AlBoCh18} in the case when $X,Y$ are compact spaces.
\end{remark}

\begin{proof}[Proof of Theorem \ref{thm duality strong}]
Fix $y_0\in Y$. Define the auxiliary cost function $\widetilde C\colon X\times \mathcal P_{d_Y}^t(Y)$ by
$$\widetilde C(x,p) := C(x,p) + \mathcal W_t(p,\delta_{y_0})^t$$
and $F\colon \mathcal P_{d_Y}^t(Y)\rightarrow \R\cup\{+\infty\}$ by
	\begin{align}F(m) &:= \inf_{P \in \Lambda(\mu,m)} \int_{X\times \mathcal P(Y)}\widetilde C(x,p)P(dx,dp)\notag\\
	&= \inf_{P \in \Lambda(\mu,m)} \int_{X\times \mathcal P(Y)} C(x,p)  P(dx,dp) + \int_Y d_Y(y,y_0)^tm(dy).\label{eq F C d}
	\end{align}
	Since the integrand $\widetilde C$ is bounded from below and lower semicontinuous we can apply Proposition~\ref{th:gozlanlsc} and find that $F$ is lower semicontinuous on $\mathcal P_{d_Y}^t(Y)$.
	Note that for any $\alpha \in [0,1]$ and $m_1,m_2\in\mathcal P_{d_Y}^t(Y)$ we have
	$$P_i \in \Lambda(\mu,m_i),~i=1,2~\implies \alpha P_1 + (1-\alpha)P_2 \in \Lambda(\mu,\alpha m_1 + (1-\alpha) m_2),$$
	and, particularly, it follows that $F$ is convex. We can extend $F$ to the set $\mathcal M_{d_Y}^t(Y)$ of bounded signed measures  with finited $t$-moment (i.e.\ $m\in \mathcal M_{d_Y}^t(Y)$  implies $\int_Y d_Y(y,y_0)^t|m|(dy)<\infty$ for some $y_0$) by setting $F(m) = +\infty$ if $m\notin \mathcal P_{d_Y}^t(Y)$. We equip the space $\mathcal M_{d_Y}^t(Y)$ with the topology {\color{black}induced} by $\Phi_t$. It follows that the extension of $F$ is still convex and lower semicontinuous. Now, the spaces $\Phi_t$ and $\mathcal M_{d_Y}^t(Y)$ are in separating duality. Define the convex conjugate $F^*\colon \Phi_t \rightarrow \R\cup\{+\infty\}$ of $F$ by
	\begin{align}\label{eq:defF*}
	F^*(\psi) = \sup_{m\in\mathcal P_{d_Y}^t(Y)} m(\psi) - F(m).	
	\end{align}		
Observe that  $F^*(\psi)=\lim_{k\to +\infty} F^*(\psi\wedge k)$, by monotone convergence. 
	We may apply the Fenchel duality theorem~\cite[Theorem 2.3.3]{Za02}, and then replace $\Phi_t$ by $\Phi_{b,t}$, obtaining:
	\begin{align*}
		F(m) &= \sup_{\psi \in \Phi_t} m(\psi) - F^*(\psi) \\
		&= \sup_{ -\psi \in \Phi_{b,t}} m(\psi) - F^*(\psi) \\
		&= \sup_{\psi \in \Phi_{b,t}} m(-\psi) - F^*(-\psi).
	\end{align*}
	Now we show that
	\begin{align}\label{eq:FstartildeRC}
	F^*(-\psi) = -\int_X  R_{\widetilde C}\psi(x)\mu(dx).
	\end{align}		
	Rewriting \eqref{eq:defF*} yields
	\begin{align*}
		F^*(-\psi) &= \sup_{m\in\mathcal P^t_{d_Y}(Y)} m(-\psi) - \inf_{P\in\Lambda(\mu,m)} \int_{X\times \mathcal P(Y)} \widetilde C(x,p)P(dx,dp) \\
		&= \sup_{\substack{m\in\mathcal P_{d_Y}^t(Y)\\ P\in \Lambda(\mu,m)}}
		-\int_X \left(  \int_{\mathcal P(Y)}  p(\psi) + \widetilde C(x,p)P_x(dp) \right) \mu(dx) \\
		&= -\inf_{\substack{m\in\mathcal P_{d_Y}^t(Y)\\ P\in \Lambda(\mu,m)}}
		\int_X \left( \int_{\mathcal P(Y)} p(\psi) + \widetilde C(x,p) P_x(dp)\right)\mu(dx)\\
		&\leq -\int_X R_{\widetilde C}\psi(x)\mu(dx).
	\end{align*}
	To show the converse inequality, we assume without loss of generality that 
	$$\int_X \widetilde{R_C}\psi(x)\mu(dx)<+\infty.$$ For all $x\in X$ the value of $R_{\widetilde C}\psi(x)$ is finite, because $\psi$ is bounded from below. Fix $\epsilon>0$. The map $R_{\widetilde C}\psi(\cdot)$ is lower semianalytic by \cite[Proposition 7.47]{BeSh78} and by \cite[Proposition 7.50]{BeSh78} there exists
	%Note that the maps $\widetilde{R_C}\psi$ and
%	$$(x,Q) \mapsto \int_{\mathcal P(Y)} p(\psi+f) + C(x,p)Q(dp),$$
%	are Borel-measurable on $X$ and $X\times \mathcal P_{\mathcal W_t}^t(\mathcal P_{d_Y}^t(Y))$, respectively. Thus, the set
%	$$A = \left\{(x,Q) \in X \times \mathcal P_{\mathcal W_t}^t(\mathcal P_{d_Y}^t(Y))\colon \widetilde{R_C}\psi(x) + \epsilon \geq \int_{\mathcal P(Y)} p(\psi+f) + C(x,p)Q(dp) \right\}$$
%	is Borel-measurable. By the Jankov-von Neumann uniformization theorem \cite[Theorem 18.1]{Ke95} 	
%	  we find 
	  an analytically measurable probability kernel $(\tilde p_x)_{x\in X}\in (\mathcal P_{d_Y}^t(Y))^X$ such that for all $x\in X$
	$$p_x(\psi) + \widetilde C(x,p_x)\leq R_{\widetilde C}\psi(x) + \epsilon.$$
	Then, we immediately obtain
	$$\int_X p_x(\psi) + \widetilde C(x,p_x) \mu(dx)\leq \int_X R_{\widetilde C}\psi(x)\mu(dx) + \epsilon.$$
	%By \cite[Proposition 7.45]{BeSh04}
	The term $\delta_{p_x}(dp)\mu(dx)$ uniquely defines a probability {\color{black}measure} $\tilde P$ on $X\times \mathcal P(Y)$.
	%$$\tilde P(A\times B) = \int_A \tilde P_x(B) \mu(dx)\quad \forall A \in \mathcal B(X),~B\in\mathcal B(\mathcal P(Y)).$$	
	Since $\widetilde C$ and $\psi$ are bounded from below, we infer that 
	$$\mathcal W_t(\proj_Y\hat I(\tilde P),\delta_{y_0})^t = \int_{X\times \mathcal P(Y)} \mathcal W_t(p,\delta_{y_0})^t \tilde P(dx,dp)< +\infty,$$
	 and in particular $\proj_Y \hat I(\tilde P)\in \mathcal P_{d_Y}^t(Y)$. Clearly $\tilde P\in\Lambda(\mu,\proj_Y\hat I(\tilde P))$, so
	\begin{align*}
	-\int_X \widetilde{R_C}\psi(x)\mu(dx) &\leq \proj_Y(\hat I(\tilde P))(-\psi) - \int_{X\times \mathcal P(Y)} C(x,p) + \mathcal W_t(p,\delta_{y_0})^t\tilde P(dx,dp) +\epsilon \\
	&\leq \proj_Y(\hat I(\tilde P))(-\psi) - F\left( \proj_Y \hat I(\tilde P) \right ) +\epsilon  \\
	&\leq F^*(-\psi) + \epsilon,
	\end{align*}
	and since $\epsilon$ was arbitrary, we have shown \eqref{eq:FstartildeRC}.
	
	So far, we know that
	$$F(m) = \sup_{\psi\in\Phi_{b,t}} -m(\psi) + \int_X R_{\widetilde C}\psi(x)\mu(dx).$$
	Define $f(y): = d_Y(y,y_0)^t$ and note that
	$R_C(\psi+f)(x) = R_{\widetilde C}\psi(x)$ for all $x\in X$, as well as $\psi + f \in \Phi_{b,t}$ for $\psi \in \Phi_{b,t}$. From \eqref{eq F C d} we get
	\begin{align*}
	\inf_{P\in\Lambda(\mu,m)} P(C) &= F(m)- \mathcal W_t(m,\delta_{y_0})^t \\
	&= \sup_{\psi\in\Phi_{b,t}} -m(\psi + f) + \int_X R_{\widetilde C}\psi(x)\mu(dx)\\
	&= \sup_{\psi\in\Phi_{b,t}} -m(\psi) + \int_X R_C \psi(x)\mu(dx),
	\end{align*}	
	which shows \eqref{eq:lofty duality}.
		
	If for all $x\in X$ the map $C(x,\cdot)$ is convex, 
	%we find {\color{red}$\widehat{R_C}\psi(x) = R_C\psi(x)$}, since $\widehat{R_C}\psi(x) \leq R_C\psi(x)$ 
%	and for any $Q\in \mathcal P_{\mathcal W_t}^t(\mathcal P_{d_Y}^t(Y))$ we have
%	$$\int_{\mathcal P(Y)} p(\psi) + C(x,p)Q(dp) \geq I(Q)(\psi) + C(x,I(Q)) \geq R_C\psi(x).$$
%	
%	
	then \eqref{eq:duality} follows by Lemma~\ref{lem:convex optimality equivalence} and \eqref{eq:lofty duality}.
\end{proof}

\section{On the restriction property}

The restriction property of optimal transport roughly states that if a coupling is optimal, then the conditioning of the coupling to a subset is also optimal given its marginals. This property fails for weak optimal transport, as we illustrate with {\color{black}a simple} example:

\begin{example}
Let $X=Y=\mathbb R$, $\mu=\frac{1}{2} \delta_{-1}+\frac{1}{2}\delta_1$, $\nu=\frac{1}{4}\delta_{-2}+\frac{1}{2}\delta_0+\frac{1}{4}\delta_2$ and $C(x,\rho)=\left(x-\int y\rho(dy)\right)^2$. We consider the weak transport problem with these ingredients, and observe that an optimal coupling is given by
$$\pi = \frac{1}{4}[\delta_{(1,2)}+\delta_{(1,0)}+\delta_{(-1,0)}+\delta_{(-1,-2)}],$$
since it produces a cost equal to zero. Consider the set $K=\{(x,y):y\neq 0\}$ and $\tilde\pi(dx,dy) = \pi(dx,dy|K)$ the conditioning of $\pi$ to the set $K$, i.e. $\tilde \pi(S):=\frac{\pi(S\cap K)}{\pi(K)}$. It follows that $$\tilde\pi= \frac{1}{2}[\delta_{(1,2)}+\delta_{(1,-2)}],$$
and denoting by $\tilde \mu $ and $\tilde \nu$ the first and second marginals of $\tilde \pi$, we have $\tilde\mu=\mu$ and $\tilde \nu =\frac{1}{2}\delta_2+\frac{1}{2}\delta_{-2}$. With $\tilde \mu$ and $\tilde \nu$ and again the cost $C$ as ingredients, an optimizer for the weak transport problem is given by 
$$\hat\pi =\frac{3}{8}\delta_{(1,2)}+\frac{1}{8}\delta_{(1,-2)}+\frac{1}{8}\delta_{(-1,2)}+\frac{3}{8}\delta_{(-1,-2)},$$
since this time this coupling produces a cost equal to zero. On the other hand the cost of $\tilde \pi$ is equal to $1$, and so $\tilde \pi$ is not optimal between is marginals.
\end{example}

However, we can state the following positive result.\footnote{In a preliminary version of this article the restriction property Proposition \ref{prop restriction} was used to derive Theorem \ref{thm a la GJ} from the compact version given by Gozlan and Juillet \cite{GoJu18}. Following the insightful  suggestion of the  anonymous referee, we now give a more self contained argument that does not require Proposition \ref{prop restriction}  /  \cite{GoJu18}. We have decided to keep Proposition \ref{prop restriction} since it might be of some independent interest.}
\begin{proposition}\label{prop restriction}
Suppose that $\pi$ is optimal between the marginals $\mu$ and $\nu$, $V(\mu,\nu)<\infty$, and that $C(x,\cdot)$ is convex. Let $0 \leq\tilde \mu\leq \mu$ be a non-negative measure such that $0\not\equiv\tilde\mu $ and define $\hat\mu = \tilde\mu/\tilde\mu(X)$. Then $\hat\pi(dx,dy):=\hat\mu(dx) \pi_x(dy)$ is optimal between its marginals. 
\end{proposition}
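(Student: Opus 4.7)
The plan is to argue by contradiction. Suppose $\hat\pi$ fails to be optimal between its marginals $(\hat\mu, \hat\nu)$, where $\hat\nu := \int \pi_x(\cdot)\,\hat\mu(dx)$ is the $Y$-marginal of $\hat\pi$. Then there exists a competitor $\hat\pi' \in \Pi(\hat\mu, \hat\nu)$ with $\int C(x,\hat\pi'_x)\,\hat\mu(dx) < \int C(x,\pi_x)\,\hat\mu(dx)$. The goal is to \emph{glue} this improved coupling on the $\hat\mu$-part with the original $\pi$ restricted to the complementary part, so as to produce a coupling in $\Pi(\mu,\nu)$ whose cost strictly undercuts $V(\mu,\nu)$.

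The trivial case $\tilde\mu(X)=1$ gives $\hat\mu=\mu$ and $\hat\pi=\pi$, so assume $\alpha:=\tilde\mu(X)\in(0,1)$. Since $\tilde\mu \leq \mu$, the density $f := d\hat\mu/d\mu$ exists and satisfies $0\leq f \leq 1/\alpha$ $\mu$-a.s. Set $g(x) := (1-\alpha f(x))/(1-\alpha) \geq 0$ and $\mu'' := g\,\mu$, a probability measure. One immediately verifies the convex decomposition $\mu = \alpha \hat\mu + (1-\alpha)\mu''$. Define $\pi''(dx,dy) := \mu''(dx)\pi_x(dy)$ and observe the analogous decomposition $\pi = \alpha\hat\pi + (1-\alpha)\pi''$ at the level of measures on $X\times Y$. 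Writing $\nu''$ for the $Y$-marginal of $\pi''$, this gives $\nu = \alpha\hat\nu + (1-\alpha)\nu''$. Now put
\begin{align*}
\pi^* := \alpha \hat\pi' + (1-\alpha)\pi'',
\end{align*}
which by construction lies in $\Pi(\mu,\nu)$.

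The core computation is the disintegration of $\pi^*$ against its $X$-marginal $\mu$: for $\mu$-a.e.\ $x$,
\begin{align*}
\pi^*_x = \alpha f(x)\,\hat\pi'_x + (1-\alpha)g(x)\,\pi_x,
\end{align*}
and the coefficients $\alpha f(x)$ and $(1-\alpha)g(x)$ sum to $1$. Convexity of $C(x,\cdot)$ then yields $C(x,\pi^*_x)\leq \alpha f(x)C(x,\hat\pi'_x) + (1-\alpha)g(x)C(x,\pi_x)$; integrating against $\mu$ produces
\begin{align*}
\int C(x,\pi^*_x)\,\mu(dx)\leq \alpha\int C(x,\hat\pi'_x)\,\hat\mu(dx) + (1-\alpha)\int C(x,\pi_x)\,\mu''(dx).
\end{align*}
Decomposing $V(\mu,\nu)=\int C(x,\pi_x)\mu(dx)$ the same way and subtracting the terms involving $\mu''$ (which is legal because $V(\mu,\nu)<\infty$ together with lower boundedness of $C$ ensures all relevant integrals are finite) gives $\int C(x,\pi^*_x)\mu(dx) < V(\mu,\nu)$, a contradiction.

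The main obstacle I anticipate is purely bookkeeping: verifying that the disintegration of $\pi^*$ is genuinely the pointwise convex combination displayed above (which requires the identity $\alpha f + (1-\alpha)g = 1$ $\mu$-a.s.\ and the absolute continuity $\hat\mu \ll \mu$ inherited from $\tilde\mu \leq \mu$), and justifying the subtraction step so that no $\infty - \infty$ arises. Once those are clean, convexity of $C(x,\cdot)$ does all the real work.
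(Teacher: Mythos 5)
Your proof is correct and is essentially the paper's own argument: your $\pi^*=\alpha\hat\pi'+(1-\alpha)\pi''$ is exactly the paper's competitor $\pi+\tilde\mu(X)[\chi-\hat\pi]$, with the same disintegration $\pi^*_x=\tfrac{d\tilde\mu}{d\mu}(x)\hat\pi'_x+\bigl(1-\tfrac{d\tilde\mu}{d\mu}(x)\bigr)\pi_x$ and the same use of convexity plus $V(\mu,\nu)<\infty$ to reach the contradiction. The only cosmetic difference is that you phrase the construction via the explicit decomposition $\mu=\alpha\hat\mu+(1-\alpha)\mu''$ (and treat $\alpha=1$ separately), whereas the paper verifies nonnegativity of $\pi^*$ directly from $\tilde\mu\leq\mu$.
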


\begin{proof}
By contradiction, suppose there exists a coupling $\chi$ with the same marginals as $\hat\pi$ such that 
$$\int C(x,\chi_x)\hat\mu(dx)<\int C(x,\hat\pi_x)\hat\mu(dx).$$
Now define $\pi^*:=\pi + \tilde{\mu}(X)[\chi-\hat \pi]=\pi-\tilde\mu.\pi_x+\tilde{\mu}(X) \chi$. Observe that $\pi^*$ has marginals $\mu,\nu$, and $\pi^*(X\times Y)=1$. We also have $\pi^*\geq 0$ since $\tilde\mu \leq \mu$, so $\pi^*$ is a probability measure. Of course $0\leq\frac{d\tilde\mu}{d\mu}\leq 1$ and clearly $\pi^*_x=\left(1-\frac{d\tilde\mu}{d\mu}(x) \right)\pi_x+ \frac{d\tilde\mu}{d\mu}(x)\chi_x$. Therefore
\begin{align*}
\int C(x,\pi^*_x)\mu(dx) &= \int C\left(x,\left(1-\frac{d\tilde\mu}{d\mu}(x) \right)\pi_x+ \frac{d\tilde\mu}{d\mu}(x)\chi_x\right )\mu(dx)\\
&\leq \int C(x,\pi_x)\mu(dx) + \int [C(x,\chi_x)-C(x,\pi_x)]  \tilde\mu(dx)\\
&< \int C(x,\pi_x)\mu (dx),
\end{align*}
where we used convexity in the first inequality and that $V(\mu,\nu)<\infty$ in the second one. 
\end{proof}

\section{$C$-Monotonicity for weak transport costs}

Cyclical monotonicity plays a crucial role in classical optimal transport \cite{RaRu90, GaMc96}. This has inspired similar development for weak transport costs in \cite{BaBeHuKa17,GoJu18}:

\begin{definition}[$C$-monotonicity]\label{def:C-monotonicity}
	We say that a coupling $\pi \in \Pi(\mu,\nu)$ is $C$-monotone if there exists a measurable set $\Gamma\subseteq X$ with $\mu(\Gamma)=1$, such that for any finite number of points $x_1,\dots,x_N$ in $\Gamma$ and measures $m_1,\dots,m_N$ in $\mathcal P(Y)$
	with $\sum_{i=1}^N m_i = \sum_{i=1}^N \pi_{x_i}$, the following inequality holds:
	\begin{align*}
		\sum_{i=1}^N C(x_i,\pi_{x_i}) \leq \sum_{i=1}^N C(x_i,m_i).
	\end{align*}
\end{definition}

We first show that $C$-monotonicity is necessary for optimality under minimal assumptions. We then provide strengthened assumptions under which $C$-monotonicity is sufficient.

\subsection{$C$-monotonicity: necessity}
 
We denote by $S_N$ the set of permutations of the set $\{1,\dots,N\}$. If $\vec z:=(z_1\dots,z_n)$ is any $N$-vector, and $\sigma\in S_N$, we naturally overload the notation by defining $$\sigma(\vec z):=(z_{\sigma(1)},\dots,z_{\sigma(N)}).$$ 

Recall the notation \eqref{eq weak transport def} for the weak transport problem{\color{black}, and the following lemma, which is employed prominently in the proof of Theorem~\ref{thm cyclical}.
\begin{lemma}[{\cite[Proposition 2.1]{BeGoMaSc08}}]\label{prop:kellerers lemma}
	Let $X_1,\ldots, X_n$, $n\geq 2$, be Polish spaces equipped with probability measures $\mu_i \in \mathcal P(X_i)$, $i=1,\ldots,n$. Then for any analytic set $B\subset X_1\times \cdots \times X_n$ one of the following holds:
	\begin{enumerate}[(a)]
		\item For every $i = 1,\ldots, n$ there is a $\mu_i$-null set $A_i \subset X_i$ s.t.
		$$B \subset \bigcup_{i=1}^n \proj_{X_i}^{-1}(A_i).$$
		\item There exists a coupling $\pi \in \Pi(\mu_1,\ldots,\mu_n)$ with $\pi(B) > 0$.
	\end{enumerate}
\end{lemma}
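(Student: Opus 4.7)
The plan is to prove the dichotomy by contrapositive: assume (b) fails, so $\pi(B) = 0$ for every coupling $\pi \in \Pi(\mu_1, \ldots, \mu_n)$, and construct $\mu_i$-null sets $A_i$ witnessing (a). I would base the argument on two pillars: Choquet capacitability to reduce from analytic $B$ to a compact $K$, and a multi-marginal Monge--Kantorovich duality to extract the null sets from the compact case.

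First I would introduce the outer set function $\gamma(E) := \sup\{\pi(E) : \pi \in \Pi(\mu_1, \ldots, \mu_n)\}$ and verify that it is a Choquet capacity on $X_1 \times \cdots \times X_n$. Monotonicity is clear; continuity from above on decreasing sequences of compact sets follows from the weak compactness of $\Pi(\mu_1, \ldots, \mu_n)$ together with the Portmanteau theorem (extract a weak limit of near-optimal couplings); continuity from below on increasing sequences of Borel sets uses the same weak compactness plus monotone convergence of the integrands. Choquet's capacitability theorem then yields $\gamma(B) = \sup\{\gamma(K) : K \subset B \text{ compact}\}$, so every compact $K \subset B$ has $\gamma(K) = 0$ under our standing assumption.

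Next, for each compact $K$ with $\gamma(K) = 0$, I would invoke multi-marginal Kantorovich duality to convert the vanishing primal value into a dual witness: for every $\epsilon > 0$ there exist bounded continuous functions $f_i^\epsilon \colon X_i \to [0, \infty)$ satisfying $\sum_i f_i^\epsilon(x_i) \geq \1_K(\vec x)$ pointwise and $\sum_i \int f_i^\epsilon\, d\mu_i < \epsilon$. Taking $\epsilon_k = 4^{-k}$, Markov's inequality gives $\mu_i(\{f_i^{\epsilon_k} > 2^{-k}\}) \leq 2^{-k}$, and Borel--Cantelli produces a $\mu_i$-null set $A_i^K \subset X_i$ off of which $f_i^{\epsilon_k}(x_i) \to 0$. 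If $\vec x \in K$ were to satisfy $x_i \notin A_i^K$ for every $i$, the inequality $\sum_i f_i^{\epsilon_k}(x_i) \geq 1$ would contradict $\sum_i f_i^{\epsilon_k}(x_i) \to 0$; hence $K \subset \bigcup_i \proj_{X_i}^{-1}(A_i^K)$.

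The final step is to patch these compact conclusions into a single covering of $B$. I would exhaust $B$ in $\gamma$-capacity by a countable increasing family of compacta $K_m \subset B$, set $A_i := \bigcup_m A_i^{K_m}$ (still $\mu_i$-null), and dispose of the remainder $B \setminus \bigcup_m K_m$ via a $\sigma$-subadditivity argument for $\gamma$, enlarging each $A_i$ by a further null set. The step I expect to be the main obstacle is the clean formulation of multi-marginal Kantorovich duality in the non-compact Polish setting: for two marginals this is classical, but for general $n$ one must produce bounded continuous (not merely $L^\infty$) dual variables, which typically demands an approximation of $K$ by open sets or a monotone-class argument layered on top of the Hahn--Banach separation.
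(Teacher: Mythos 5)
The paper does not reprove this lemma; it is quoted from \cite[Proposition 2.1]{BeGoMaSc08}, where the argument runs through duality for \emph{measurable} cost functions (Kellerer's duality theorem applied to the cost $\1_B$ itself), after which exactly your Borel--Cantelli step produces the null sets $A_i$. Your first two blocks are sound in spirit: $\gamma(E)=\sup_{\pi\in\Pi(\mu_1,\ldots,\mu_n)}\pi(E)$ is indeed a Choquet capacity (continuity from below commutes with the supremum; continuity along decreasing compacts follows from compactness of $\Pi(\mu_1,\ldots,\mu_n)$ and Portmanteau), and for a \emph{compact} $K$ with $\gamma(K)=0$ the duality with continuous dual variables (via Urysohn functions on open neighbourhoods of $K$, say) plus Borel--Cantelli does give $K\subset\bigcup_i\proj_{X_i}^{-1}(A_i^K)$ with $\mu_i(A_i^K)=0$.

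The genuine gap is the final patching step, and it is not repairable as stated: it is circular. Under the standing assumption, $\gamma(B)=0$ (note $B$ is analytic, hence universally measurable, so this needs no capacitability at all), and therefore Choquet's theorem --- inner approximation of $B$ by compacts \emph{in capacity} --- gives no information whatsoever: every compact $K\subset B$ has $\gamma(K)=0$, and the approximation is already achieved by $K_m=\emptyset$. An analytic set need not be $\sigma$-compact, so the remainder $R:=B\setminus\bigcup_m K_m$ can be essentially all of $B$; all you know about it is $\gamma(R)=0$, and converting ``$\gamma(R)=0$'' into ``$R$ is contained in a union of null cylinder sets'' is precisely the implication (not (b)) $\Rightarrow$ (a) that you are trying to prove. $\sigma$-subadditivity of $\gamma$ cannot absorb $R$ into enlarged null sets $A_i$. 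What is actually needed is a duality theorem in which the dual variables are merely measurable and the cost is the (Borel, or analytic) indicator $\1_B$ itself --- this is Kellerer's theorem, whose proof does use capacitability, but applied to the \emph{dual functional} on functions (continuity from below along increasing sequences of costs), not to the primal set capacity $\gamma$. With that theorem in hand, your Borel--Cantelli argument applied directly to near-optimal measurable dual functions for $\1_B$ finishes the proof; without it, the reduction to compact subsets of $B$ cannot reach the whole of $B$.
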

The previous lemma is originally stated only for Borel sets, but the same proof technique also works for analytic sets.
}

 Our main result, concerning the necessity of $C$-monotonicity is the following:

\begin{theorem}\label{thm cyclical}
	Let $C$ be jointly measurable and $C(x,\cdot)$ be convex and lower semicontinuous for all $x$. Assume that $\pi^*$ is optimal for $V(\mu,\nu)$ and $|V(\mu,\nu)|<\infty$. Then $\pi^*$ is $C$-monotone.
\end{theorem}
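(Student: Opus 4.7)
The plan is to argue by contradiction, combining the dichotomy of Lemma~\ref{prop:kellerers lemma} with a convex perturbation of $\pi^*$. Fix a Borel disintegration $(\pi^*_x)_{x\in X}$ and, for each $N\geq 2$, set
\[
B_N := \Bigl\{\vec x\in X^N : \exists\,\vec m\in\mathcal P(Y)^N,\ \sum_{i=1}^N m_i=\sum_{i=1}^N \pi^*_{x_i},\ \sum_{i=1}^N C(x_i,m_i)<\sum_{i=1}^N C(x_i,\pi^*_{x_i})\Bigr\}.
\]
Since $C$ is jointly Borel and the disintegration is Borel, $B_N$ is the $X^N$-projection of a Borel subset of $X^N\times\mathcal P(Y)^N$, hence analytic; it is also symmetric under coordinate permutations.

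Next I would apply Lemma~\ref{prop:kellerers lemma} with $\mu_1=\cdots=\mu_N=\mu$. If alternative~(a) holds for every $N\geq 2$, intersecting the resulting $\mu$-conull sets yields $\Gamma\subset X$ with $\mu(\Gamma)=1$ and $\Gamma^N\cap B_N=\emptyset$ for all $N$, which is precisely the $C$-monotonicity of $\pi^*$. It remains to rule out alternative~(b): suppose there exist $N$ and $\eta\in\Pi(\mu,\ldots,\mu)$ with $\eta(B_N)>0$. Since $B_N$ is symmetric, I may replace $\eta$ by $\tfrac{1}{N!}\sum_\sigma\sigma_*\eta$ and so assume $\eta$ is exchangeable. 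The Jankov-von Neumann selection theorem (\cite[Proposition~7.50]{BeSh78}) furnishes a universally measurable map $\vec m\colon B_N\to\mathcal P(Y)^N$ realizing the strict improvement; invoking Lusin's theorem together with the $\eta$-a.s.\ finiteness of $\sum_i C(x_i,\pi^*_{x_i})$ (which follows from $V(\mu,\nu)<\infty$ and exchangeability) I restrict to a symmetric Borel subset $B'_N\subset B_N$ with $\eta(B'_N)>0$ on which $\vec m$ is Borel, the sums $\sum_i C(x_i,\pi^*_{x_i})$ and $\sum_i C(x_i,m_i(\vec x))$ are bounded, and the improvement gap $\sum_i[C(x_i,\pi^*_{x_i})-C(x_i,m_i(\vec x))]\geq\delta$ is uniform for some $\delta>0$.

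Setting $\bar\eta:=\eta|_{B'_N}$, exchangeability provides a common one-dimensional marginal $\bar\mu\leq\mu$, and I form the sub-couplings
\[
\pi_-(dx,dy):=\bar\mu(dx)\,\pi^*_x(dy),\qquad
\pi_+(dx,dy):=\tfrac{1}{N}\sum_{i=1}^N\int\delta_{x_i}(dx)\,m_i(\vec x)(dy)\,\bar\eta(d\vec x),
\]
which agree on first marginals by exchangeability and on second marginals by the identity $\sum_i m_i=\sum_i\pi^*_{x_i}$ on $B'_N$. For small $\epsilon\in(0,1]$, the perturbation $\tilde\pi:=\pi^*-\epsilon\pi_-+\epsilon\pi_+$ then belongs to $\Pi(\mu,\nu)$ (nonnegativity follows from $\pi_-\leq\pi^*$). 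Disintegration gives $\tilde\pi_x=(1-\epsilon\alpha(x))\pi^*_x+\epsilon\alpha(x)\tilde m_x$ with $\alpha:=d\bar\mu/d\mu$ and $\tilde m_x:=\tfrac{1}{N}\sum_i\mathbb E_{\bar\eta}[m_i(\vec X)\mid X_i=x]$; applying the convexity of $C(x,\cdot)$ first to $\tilde\pi_x$ and then, via Jensen, inside the conditional expectation defining $\tilde m_x$, and combining with exchangeability yields
\[
\int C(x,\tilde\pi_x)\,\mu(dx)\leq V(\mu,\nu)+\tfrac{\epsilon}{N}\int_{B'_N}\!\Bigl[\sum_i C(x_i,m_i(\vec x))-\sum_i C(x_i,\pi^*_{x_i})\Bigr]\bar\eta(d\vec x)\leq V(\mu,\nu)-\tfrac{\epsilon\delta}{N}\bar\eta(B'_N),
\]
contradicting optimality of $\pi^*$. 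The main obstacle will be the measurability and integrability bookkeeping that makes this perturbation rigorous: upgrading the Jankov-von Neumann selector to a Borel one on a symmetric subset where all relevant cost sums are simultaneously bounded and the improvement gap is uniform, so that the two successive invocations of Jensen produce no $+\infty-\infty$ ambiguity. Once that is in place, the construction is a weighted, measure-theoretic version of the classical cycle-swap argument used to prove $c$-cyclical monotonicity in standard optimal transport.
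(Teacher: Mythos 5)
Your proposal is correct and follows essentially the same route as the paper: the analytic bad set $B_N$ (the paper's $D_N$), the dichotomy of Lemma~\ref{prop:kellerers lemma}, a Jankov--von Neumann selector of improving tuples, and a convexity/Jensen argument contradicting the optimality of $\pi^*$. The only deviation is in assembling the competitor: you use an $\epsilon$-perturbation supported on a positive-measure Borel piece with uniform gap and average over coordinates (which in fact spares you the paper's selector-symmetrization Lemma~\ref{lem:extendf}; note that a \emph{symmetric} $B'_N$ is not automatic, but it is also unnecessary if you simply take $\bar\mu$ to be the average of the coordinate marginals of $\bar\eta$), whereas the paper replaces $\pi^*$ wholesale via a symmetrized selector and an arbitrary coupling charging $D_N$.
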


\begin{proof}
	Let $N\in \N$. Then
	\begin{align*}
	\mathcal D_N:=\bigg\{((x_1,\ldots,x_N)&,(m_1,\ldots,m_N))\in X^N\times\mathcal P(Y)^N:\\ &\sum_{i=1}^N \pi^*_{x_i} = \sum_{i=1}^N m_i \text{ and } \sum_{i=1}^N C(x_i,\pi^*_{x_i}) >\sum_{i=1}^N C(x_i,m_i)\bigg\},
	\end{align*}
	is an analytic set. Write $$D_N:= \proj_{X^N}(\mathcal D_N).$$ By Jankov-von Neumann uniformization \cite[Theorem 18.1]{Ke95} there is an analytically measurable function $f_N\colon D_N\rightarrow \mathcal P(Y)^N$ such that $\text{graph}(f_N) \subset \mathcal D_N$. We can extend $f_N$ to $X^N$ by defining it on $X^N \setminus D_N$ as the Borel-measurable map $\vec x \mapsto (\pi^*_{x_1},\ldots,\pi^*_{x_N})$.
Observe that for all $\sigma \in S_N$, we have $(\sigma,\sigma)(\mathcal D_N) = \mathcal D_N$. Thanks to this, and Lemma~\ref{lem:extendf} below, we can assume without loss of generality that $f_N$ satisfies
	$$f_N\circ \sigma = \sigma \circ f_N\quad \forall \sigma\in S_N.$$
	We write $f^i_N(\vec x)$ for the $i$-th element of the vector $f_N(\vec x)\in\mathcal P(Y)^N$.

Assume that there exists a coupling $Q\in\Pi(\mu^N)=\Pi(\mu,\ldots,\mu)$ such that $Q(D_N) > 0$. We now show that this is in conflict with optimality of $\pi^*$. We clearly may assume that $Q$ is symmetric, i.e.\ such that for all $\sigma \in S_N$ we have $Q(B)=Q(\sigma(B))$ for all $B\in\mathcal B(X^N)$ (in other words $\sigma(Q) = Q$). We define the possible contender $\tilde \pi$ of $\pi^*$ by
	\begin{align}
	\tilde \pi(dx_1,dy) := \mu(dx_1) \int_{X^{N-1}}  Q_{x_1} ( dx_2,\ldots, dx_n) f_N^1(x_1,\ldots,x_N)(dy),
	\end{align}
	which is legitimate owing to all measurability precautions we have taken. We will prove 
	\begin{enumerate}
		\item $\tilde\pi\in\Pi(\mu,\nu)$,
		\item $\int\mu( dx)C(x,\pi^*_x) > \int \mu( dx)C(x, \tilde\pi_x )$.
	\end{enumerate}
	
	Ad (1): Evidently the first marginal of $\tilde\pi$ is $\mu$. Write $\sigma_i\in S_N$ for the permutation that merely interchanges the first and $i$-th component of a vector. By the symmetric properties of $Q$ and $f_N$ we find
	\begin{align*}
	\int_{X} \mu( dx_1)\tilde\pi_{x_1}(dy)&=\int_{X^N}  Q(dx_1,\ldots,dx_N)f^1_N(\vec x)(d y)\\
	&=\frac{1}{N}\sum_{i=1}^N \int_{X^N} \sigma_i(Q)(dx_1,\ldots,dx_N) f^i_N(\vec x)(dy)\\
	&=\frac{1}{N}\sum_{i=1}^N \int_{X^N} Q(dx_1,\ldots,dx_N) \pi^*_{x_i}(dy)\\
	&=\nu(dy).
	\end{align*}

	Ad (2): On $D_N$ holds by construction the strict inequality $$\sum_{i=1}^N C(x_i,f_N^i(\vec x))<\sum_{i=1}^N C(x_i,\pi_{x_i}).$$
	 Using convexity of $C(x,\cdot)$ and the symmetry properties of $Q$ and $f_N$, we find
	\begin{align*}
	\int_XC(x,\tilde\pi_{x})\mu(dx)&=\int_X\mu( d{x_1})C\left({x_1},\int_{X^{N-1}}Q_{x_1}( dx_2,\ldots,dx_N)f_N^1(\vec x)\right)
	\\ &\leq \int_{X^N}Q(d\vec x)C\big({x_1},f^1_N(\vec x)\big)\\
	&=\frac{1}{N}\sum_{i=1}^N \int_{X^N} Q(d\vec x)C\big({x_i},f^i_N(\vec x)\big)\\
	&<\frac{1}{N}\sum_{i=1}^N \int_{X^N} Q(d\vec x)C\big({x_i},\pi_{x_i}\big) = \int_X C(x,\pi_x)\mu(dx),
	\end{align*}
	yielding a contradiction to the optimality of $\pi^*$.
	
	We conclude that no measure $Q$ with the stated properties exists. {\color{black}By Lemma~\ref{prop:kellerers lemma}}, we obtain that $D_N$ is contained in a set of the form $\bigcup_{k=1}^N \proj_k^{-1}(M_N)$ where $\mu(M_N) = 0$ and $\proj_k$ denotes the projection from $X^N$ to its $k$-th component. Since $N\in\mathbb N$ was arbitrary, we can define the set $\Gamma:= (\bigcup_{N\in\mathbb{N}} M_N)^C$ with $\mu(\Gamma) = 1$, which has the desired property.
\end{proof}

The missing bit in the above proof is Lemma \ref{lem:extendf}. By \cite[Theorem 7.9]{Ke95} there exists for every Polish space $X$ a closed subset $F$ of the Baire space $\mathcal N := \N^\N$ and a continuous bijection $h_X\colon F \rightarrow X$. On the Baire space the lexicographic order naturally provides a total order. Hence, $X$ inherits the total order of $F\subset \mathcal N$ by virtue of $h_X$ and its Borel-measurable inverse $h_X^{-1} := g_X$, namely:
$$x,y \in X\colon x\leq y \Leftrightarrow h_X^{-1}(x) = g_X(x) \leq h_X^{-1}(y) = g_X(y).$$

\begin{lemma}\label{lem:extendf}
	The set 
	$$A = \left\{\vec x\in X^N\colon x_1\leq x_2\leq \ldots\leq x_N \right\},$$
	is Borel-measurable. Given $f\colon A\subset X^N\rightarrow Y^N$ an analytically measurable function, there exists an analytically measurable extension $\hat f\colon X^N\rightarrow Y^N$ such that for any $\sigma \in S_N$
	$$\hat f \circ \sigma = \sigma \circ \hat f.$$
\end{lemma}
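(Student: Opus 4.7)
The plan is first to establish Borel-measurability of $A$ via the order transported from the Baire space, and then to build $\hat f$ by selecting a canonical sorting permutation for each vector in $X^N$.

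For the Borel property, recall that the total order on $X$ is induced from the lexicographic order on $\mathcal N$ through the Borel map $g_X$. A short check shows that $L := \{(a,b)\in\mathcal N^2 : a \leq_{\text{lex}} b\}$ is closed in $\mathcal N^2$ (since pointwise convergence in $\mathcal N$ is eventually stationary), hence Borel. Therefore $\{(x,y)\in X^2 : x\leq y\} = (g_X\times g_X)^{-1}(L)$ is Borel in $X^2$, and $A = \bigcap_{i=1}^{N-1}\{\vec x\in X^N : x_i \leq x_{i+1}\}$ is Borel as a finite intersection.

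To build $\hat f$, fix any total order $\prec$ on the finite group $S_N$ with the identity $e$ as minimum. For $\vec x\in X^N$, the set $\Sigma(\vec x) := \{\sigma\in S_N : \sigma\vec x\in A\}$ is nonempty since every finite tuple has a sorted rearrangement, so $\sigma_*(\vec x) := \min_\prec \Sigma(\vec x)$ is well defined and yields a Borel partition $X^N = \bigsqcup_{\sigma\in S_N}\{\sigma_* = \sigma\}$. On each piece I would set $\hat f(\vec x) := \sigma_*(\vec x)^{-1}f\bigl(\sigma_*(\vec x)\vec x\bigr)$; this is analytically measurable because on each Borel cell $\hat f$ is the composition of $f$ with a Borel-measurable map, and $\hat f|_A = f$ since $e\in\Sigma(\vec x)$ whenever $\vec x\in A$ and $e$ is $\prec$-minimal.

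The hard part is the equivariance $\hat f\circ\tau = \tau\circ\hat f$ at tuples with repeated entries. On the Borel subset of $X^N$ where all components of $\vec x$ are distinct, $\Sigma(\vec x)$ is a singleton, the identity $\sigma_*(\tau\vec x) = \sigma_*(\vec x)\tau^{-1}$ holds, and equivariance follows by a direct computation. For $\vec x$ with repeats, however, the sorted representative has a nontrivial stabilizer $H\leq S_N$, and equivariance then forces $f$ to be $H$-invariant there under the diagonal $S_N$-action on $Y^N$, which is not automatic. The remedy is to first modify $f$ on the Borel set $A^{\text{sing}} := A\cap\{\vec x : x_i = x_j \text{ for some } i\neq j\}$ to a stabilizer-invariant prescription, for instance replacing $f(\vec y)$ there by a constant tuple $(y_0,\ldots,y_0)$ with $y_0\in Y$ fixed. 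This preserves analytic measurability and yields global equivariance of $\hat f$; for the intended application to Theorem~\ref{thm cyclical} the adjustment is harmless because the $(\sigma,\sigma)$-invariance of $\mathcal D_N$ ensures that the modified selection still has graph contained in $\mathcal D_N$, which is all that is used in the argument.
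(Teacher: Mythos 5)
Your measurability argument and your core construction are sound and are essentially the paper's route: the paper also pulls the order back from the Baire space (there via the sets $B_\sigma$ and $A_\sigma = (g_X^N)^{-1}(B_\sigma)$, of which $A=A_{\id}$), selects for each $\vec x$ a canonical sorting permutation, and defines $\hat f(\vec x)$ by conjugating $f$ evaluated at the sorted tuple. Moreover, the obstruction you flag at tuples with repeated coordinates is genuine: if $\vec x\in A$ has $x_i=x_j$ and $\tau$ lies in the stabilizer of $\vec x$, then equivariance together with $\hat f\upharpoonright A=f$ forces $f(\vec x)=\tau(f(\vec x))$, so for an arbitrary analytically measurable $f$ no equivariant extension exists at all. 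The paper's concluding step (``we easily find $\sigma^{-1}(\hat f\circ\sigma(\vec x))=\hat f(\vec x)$'') is only valid off the set of tuples with ties, or under stabilizer-invariance of $f$ there, so you have correctly identified a point the paper glosses over.

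Where your proposal breaks down is the remedy. Replacing $f$ on $A^{\mathrm{sing}}$ by a constant tuple $(y_0,\ldots,y_0)$ means $\hat f$ no longer extends $f$, so the lemma as stated is not proved; and the justification that this is harmless for Theorem~\ref{thm cyclical} is a non sequitur. The $(\sigma,\sigma)$-invariance of $\mathcal D_N$ says nothing about the modified values: at a point $\vec x\in D_N\cap A^{\mathrm{sing}}$ the tuple $(y_0,\ldots,y_0)$ (in the application, a tuple of measures) will in general violate the mass-balance constraint $\sum_{i}m_i=\sum_i\pi^*_{x_i}$ as well as the strict cost inequality, so the graph of the modified selection leaves $\mathcal D_N$ exactly where it is needed; and tuples with repeated coordinates cannot be discarded, since they may carry positive mass under a symmetric $Q\in\Pi(\mu,\ldots,\mu)$ (for instance when $\mu$ has atoms). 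The correct repair, in the application, is to symmetrize rather than overwrite: for $\vec x$ with stabilizer $H\leq S_N$ replace the selected $(m_1,\ldots,m_N)$ by $m_i':=\frac{1}{|H|}\sum_{h\in H}m_{h(i)}$. This preserves $\sum_i m_i'=\sum_i\pi^*_{x_i}$, and since $x_{h(i)}=x_i$ for $h\in H$, convexity of $C(x,\cdot)$ gives $\sum_i C(x_i,m_i')\leq\sum_i C(x_i,m_i)<\sum_i C(x_i,\pi^*_{x_i})$, so the symmetrized selection still has graph in $\mathcal D_N$ and is stabilizer-invariant; with this modification your construction (and the paper's) is genuinely equivariant everywhere. (A cosmetic point: with the paper's convention $\sigma(\vec z)=(z_{\sigma(1)},\ldots,z_{\sigma(N)})$ the identity on the distinct-coordinate set reads $\sigma_*(\tau\vec x)=\tau^{-1}\circ\sigma_*(\vec x)$ rather than $\sigma_*(\vec x)\tau^{-1}$, but this does not affect the argument.)
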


\begin{proof}[Proof of Lemma~\ref{lem:extendf}]Let $\hat A = \left\{\vec a\in \mathcal N^N\colon a_1\leq a_2\leq \ldots\leq a_N \right\}$, and define $g\colon \mathcal N^N \rightarrow S_N$ by $ g(\vec a) = \sigma$ where $\sigma\in S_N$ satisfies
	\begin{itemize}
	\item $\sigma(\vec a) \in \hat A$
	\item for each $i,j$ such that $ 0\leq i<j \leq N$ it holds $$ a_i = a_j \implies \sigma(i) < \sigma(j).$$
	\end{itemize}
	With these precautions $ g(\vec a) = \sigma$ is indeed well defined.
	For each $\sigma \in S_N$ we define also $B_\sigma\subset \mathcal N^N$ by
	$$B_\sigma :=\left\{ \vec a \in \mathcal N^N\colon g(\vec a) = \sigma\right\}= \left\{\vec a \in \mathcal N^N\colon  a_{\sigma(1)} \leq^1_\sigma a_{\sigma(2)} \leq^2_\sigma \ldots \leq^{N-1}_\sigma a_{\sigma(N)}\right\},$$
	where the order $\leq_\sigma^i$ is defined depending on $\sigma$ by
	$$\leq^{i}_\sigma := \begin{cases} \leq & \sigma(i) \leq \sigma(i+1),\\ < & \text{else}.\end{cases}$$
	It follows from this representation that $B_\sigma $ is Borel-measurable. We introduce
	$$X^N\ni\vec x \mapsto g^N_X(\vec x):= (g_X( x_1), g_X( x_2),\dots,g_X( x_N))\in F^N\subset \mathcal N^N.$$ 
	Then the set $$A_\sigma := \{\vec x \in X^N\colon g\circ g^N_X(\vec x) = \sigma\} = (g_X^N)^{-1}(B_\sigma),$$
	is Borel-measurable. In particular, $A_{id} = A$ is Borel-measurable. Note that $\cup_\sigma A_\sigma = X^N$ and $A_{\sigma_1} \cap A_{\sigma_2} = \emptyset$ if $\sigma_1\not\equiv \sigma_2$. 
	We can apply Lemma~\ref{lem:lex<perm}, proving the continuity\footnote{In fact one obtains $	\max_{i\in \{1,\ldots,N\}} d_{\mathcal N}(g(\vec a)(\vec a)_i,g(\vec b)(\vec b)_i) \leq \max_{i \in \{1,\ldots,N\}} d_{\mathcal N}(a_i,b_i)$, for $d_{\mathcal N}$ the metric on ${\mathcal N}$ that we recall in Lemma~\ref{lem:lex<perm}.} of $$\mathcal N^N\ni \vec a\mapsto G(a):=g(\vec a)(\vec a) \in \mathcal N^N.$$ We define the candidate for the desired extension of $f$ by
	\begin{align*}
		\hat f\colon& X^N\rightarrow Y^N,\\
		&\,\,\vec x\,\, \mapsto  (g\circ g_X^N(\vec x))^{-1} \left (\,f \circ (g_X^N)^{-1}\circ G \circ g_X^N (\vec x)\,\right ), 
	\end{align*}
	which is well defined since $G \circ g_X^N (\vec x)\in \hat A$, so that $(g_X^N)^{-1}\circ G \circ g_X^N (\vec x)\in A$. As a composition of analytically measurable function, $\hat f$ inherits this property. It is also clear that $\hat f (\vec x)=f(\vec x)$ if $\vec x\in A$. Finally, for any $\sigma \in S_N$ and $\vec x \in X^N$, we easily find
	\begin{align*}
	\sigma^{-1}(\hat f\circ \sigma(\vec x) )&=  \hat f(\vec x).
	\end{align*}		
	\end{proof}

\begin{lemma}\label{lem:lex<perm}
	Let each of $a,b\in \mathcal N^N$ be increasing vectors.\footnote{A vector $v = (v_i)_{i=1}^N\in\mathcal N^N$ is increasing if for any $1 \leq i < j \leq N$ we have $v_i \leq v_j$, where inequality here is meant in the lexicographic order on $\mathcal{N}$.} Then for any permutation $\sigma \in S_N$ we have
	\begin{align}\label{eq:lex<perm}
	\max_{i\in \{1,\ldots,N\}} d_{\mathcal N}(a_i,b_i) \leq \max_{i \in \{1,\ldots,N\}} d_{\mathcal N}(a_i,b_{\sigma(i)}),
	\end{align}
	where the metric $d_{\mathcal N}$ on $\mathcal N$ is given by
	$$d_{\mathcal N}(a,b) = \begin{cases} 0 & a = b\\ \frac{1}{\min\{n\in\N\colon a(n) \neq b(n)\}} & \text{else}.\end{cases}$$
\end{lemma}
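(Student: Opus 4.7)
The plan exploits the ultrametric nature of $d_{\mathcal N}$. Given the permutation $\sigma \in S_N$, set $r := \max_i d_{\mathcal N}(a_i, b_{\sigma(i)})$. Because $d_{\mathcal N}$ takes values in $\{0\}\cup\{1/n : n\in\mathbb N\}$, either $r = 0$, in which case $a_i = b_{\sigma(i)}$ for every $i$ and the inequality follows at once from both $a$ and $b$ being lex-sorted, or $r = 1/n$ for some $n \in \mathbb N$. Define the relation $x \sim_r y$ on $\mathcal N$ to mean $d_{\mathcal N}(x,y) \leq r$. The strong triangle inequality $d_{\mathcal N}(x,z) \leq \max(d_{\mathcal N}(x,y), d_{\mathcal N}(y,z))$ makes $\sim_r$ an equivalence relation, and concretely the $\sim_r$-class of $x$ equals $\{y \in \mathcal N : y(j) = x(j) \text{ for all } j < n\}$.

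The central step, which I expect to be the main obstacle, is to show that every $\sim_r$-class is an \emph{interval} in the lexicographic order: if $y \leq w \leq z$ and $y \sim_r x \sim_r z$, then $w \sim_r x$. I would argue by contradiction. Let $k < n$ be the smallest index at which $w(k) \neq x(k)$; since $y$ and $z$ both agree with $x$ on coordinates $1,\ldots,n-1$, we have $w(j) = y(j) = z(j) = x(j)$ for all $j < k$. If $w(k) < x(k) = y(k)$ then $w < y$ in lex, contradicting $y \leq w$; if $w(k) > x(k) = z(k)$ then $w > z$ in lex, contradicting $w \leq z$. Hence $\sim_r$-classes are lex-convex, so the lex order descends to a genuine total order on the quotient $\mathcal N/\!\sim_r$.

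The conclusion is now combinatorial. The relations $a_i \sim_r b_{\sigma(i)}$ mean that the multisets of classes $\{[a_i]\}_{i=1}^N$ and $\{[b_j]\}_{j=1}^N$ coincide. Since both $(a_i)_i$ and $(b_j)_j$ are lex-increasing, their projections to $\mathcal N/\!\sim_r$ are weakly increasing in the quotient order; but two weakly increasing sequences in a totally ordered set that are equal as multisets must agree term by term. Therefore $[a_i] = [b_i]$ for every $i$, equivalently $d_{\mathcal N}(a_i, b_i) \leq r$, which is exactly \eqref{eq:lex<perm}.
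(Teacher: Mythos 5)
Your proof is correct, but it follows a genuinely different route from the paper's. The paper argues by induction on $N$ with an exchange argument: if $\sigma$ does not fix the last index, it is modified by a transposition, and the case analysis rests on the monotonicity fact that for lex-ordered points $x\leq y\leq z$ one has $d_{\mathcal N}(x,y)\leq d_{\mathcal N}(x,z)$ and $d_{\mathcal N}(y,z)\leq d_{\mathcal N}(x,z)$; the inductive hypothesis then finishes the step. You instead exploit the ultrametric structure globally: with $r=\max_i d_{\mathcal N}(a_i,b_{\sigma(i)})$, the closed balls of radius $r$ partition $\mathcal N$ into equivalence classes, your convexity argument (which is the same structural compatibility between $d_{\mathcal N}$ and the lexicographic order that drives the paper's case analysis, just packaged as ``balls are lex-intervals'') shows the quotient is totally ordered, and then the conclusion is the elementary fact that two weakly increasing enumerations of the same multiset coincide termwise. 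Your route avoids induction and the permutation surgery entirely, yields directly the pointwise bound $d_{\mathcal N}(a_i,b_i)\leq r$ for every $i$ (which is also what the paper's footnote after Lemma~\ref{lem:extendf} records), and applies verbatim to any totally ordered ultrametric space whose balls are order-convex; the paper's induction is more hands-on but needs the bookkeeping with $\hat\sigma$, $k_1$, $k_2$. Two cosmetic points: the case $r=0$ needs no separate treatment (it is the general argument with equality as the equivalence), and you may want to state explicitly that lex-convexity of the classes is what makes the quotient order well defined, i.e.\ that distinct classes are completely separated by the order — this is a one-line consequence of the interval property you proved.
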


\begin{proof}
	We show the assertion by induction.
	For $N=1$ \eqref{eq:lex<perm} holds trivially.
	Now assume that \eqref{eq:lex<perm} holds for $N=k$.
	Given $\sigma \in S_{k+1}$ and $a,b\in \mathcal N^{k+1}$ increasing, we know  that any $\tilde \sigma \in S_k$ yields
	$$\max_{i\in\{1,\dots,k\}}d_{\mathcal N}(a_i,b_i) \leq \max_{i \in \{1,\dots,k\}} d_{\mathcal N}(a_i, b_{\tilde \sigma(i)}).$$
	If $\sigma(k+1) = k+1$ the assertion follows by the inductive hypothesis. So let $\sigma(k+1) \neq k+1$ and write $k_1 = \sigma(k+1)$ and $k_2 = \sigma^{-1}(k+1)$. Define a permutation $\hat \sigma \in S_k$ by 
	$$\hat \sigma(i) = \begin{cases} \sigma(i) & i\neq k_1 \\
	k_2& i = k_1\end{cases}$$
		Since that $a_{k_2} \leq a_{k+1}$ and $b_{k_1} \leq b_{k+1}$, then 	\begin{align*}
	 a_{k_2} \leq b_{k_1} \implies  a_{k_2} \leq b_{k_1}\leq b_{k+1} \implies  d_{\mathcal N}(a_{k_2},b_{k_1}) \leq d_\mathcal N(a_{k_2},b_{k+1}),\\
	 a_{k_2} \geq b_{k_1} \implies a_{k+1}\geq a_{k_2} \geq b_{k_1} \implies d_{\mathcal N}(a_{k_2},b_{k_1}) \leq d_\mathcal N(a_{k+1},b_{k_1}),
	\end{align*}
	and particularly
	\begin{align}\label{eq:lex<perm1}
	\max_{i\in\{1,\ldots,k\}} d_\mathcal N(a_i, b_{\hat \sigma(i)}) \leq \max_{i\in\{1,\dots,k+1\}} d_\mathcal N(a_i, b_{\sigma(i)}).
\end{align}		
	On the other hand, clearly
		\begin{align*}
	a_{k+1} \geq b_{k+1} \implies d_\mathcal N(a_{k+1},b_{k+1}) \leq d_\mathcal N(a_{k+1},b_{k_1}),\\
	a_{k+1} \leq b_{k+1} \implies d_\mathcal N(a_{k+1},b_{k+1}) \leq d_\mathcal N(a_{k_2},b_{k+1}).
	\end{align*}
	This and \eqref{eq:lex<perm1} yield $\max_{i\in\{1,\dots,k+1\}} d_\mathcal N(a_i,b_i) \leq \max_{i\in\{1,\dots,k+1\}} d_\mathcal N(a_i,b_{\sigma(i)})$, so concluding the inductive step.
\end{proof}

\subsection{C-monotonicity: sufficiency}

The conditions under which Theorem~\ref{thm cyclical} holds are rather mild. If we assume
further continuity properties of $C$, the next theorem establishes that $C$-monotonicity
is also a sufficient criterion for optimality, resembling the classical case. For weak transport costs, we don't know of any comparable result in the literature.

We recall that, for the given compatible complete metric $d_Y$ on $Y$, we denote by $\mathcal W_1$ the $1$-Wasserstein distance \cite[Chapter 7]{Vi03}.  

\begin{theorem}\label{th:compactsufficient}
	Let $\nu\in\mathcal{P}_{d_Y}^1(Y)$. Assume that $C\colon X\times\mathcal{P}_{d_Y}^1(Y)\to\mathbb{R}$ satisfies Condition~\textsc{(A+)} and is $\mathcal W_1$-Lipschitz in the second argument is the sense that
	{\color{black}for some $L\geq 0$:}
	\begin{align}\label{def:Lip W 1}
	|C(x,p)-C(x,q)|\leq L\mathcal W_1(p,q),\,\, \forall x\in X,\forall p,q\in \mathcal{P}_{d_Y}^1(Y).
	\end{align} If $\pi$ is $C$-monotone %and $\int_X C(x,\pi_x)\mu(dx)$ is finite, 
then $\pi$ is an optimizer of $V(\mu,\nu)$.
\end{theorem}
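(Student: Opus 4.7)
The strategy is to discretize and combine the $C$-monotonicity of $\pi$ with the $\mathcal W_1$-Lipschitz hypothesis on $C$. By Theorem~\ref{thm existence compactset} an optimizer $\pi^*\in\Pi(\mu,\nu)$ of $V(\mu,\nu)$ exists, so I will argue by contradiction: suppose the $C$-monotone $\pi$ satisfies $\int C(x,\pi_x)\,\mu(dx)>\int C(x,\pi^*_x)\,\mu(dx)=V(\mu,\nu)$, and let $\Gamma\subset X$ be the full $\mu$-measure set witnessing $C$-monotonicity of $\pi$.

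First I would pass to empirical measures. Let $(X_i)_{i\ge 1}$ be i.i.d.\ with law $\mu$; by $\mu(\Gamma)=1$ we may assume every $X_i\in\Gamma$. Set $\bar\eta_N:=\tfrac1N\sum_{i=1}^N\pi_{X_i}$ and $\bar\eta^*_N:=\tfrac1N\sum_{i=1}^N\pi^*_{X_i}$, both elements of $\mathcal P_{d_Y}^1(Y)$. Since $\nu\in\mathcal P_{d_Y}^1(Y)$, the maps $x\mapsto\int d_Y(y,y_0)\pi_x(dy)$ and $x\mapsto\int d_Y(y,y_0)\pi^*_x(dy)$ lie in $L^1(\mu)$, and combining Varadarajan's theorem with the strong law of large numbers applied to these first moments yields $\bar\eta_N,\bar\eta^*_N\to\nu$ in $\mathcal W_1$ almost surely. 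In particular $\mathcal W_1(\bar\eta^*_N,\bar\eta_N)\to0$.

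Next I would build admissible competitors for the $C$-monotonicity inequality at $X_1,\ldots,X_N$. Pick an optimal $\mathcal W_1$-coupling of $\bar\eta^*_N$ and $\bar\eta_N$ and disintegrate it in the form $\bar\eta^*_N(dy)K(y,dy')$. Setting $m_i(dy'):=\int K(y,dy')\pi^*_{X_i}(dy)$ one checks $\sum_{i=1}^N m_i=\sum_{i=1}^N\pi_{X_i}$, so $(m_i)$ is admissible in Definition~\ref{def:C-monotonicity}; moreover, the coupling $\pi^*_{X_i}(dy)K(y,dy')$ between $\pi^*_{X_i}$ and $m_i$ gives
\begin{align*}
\sum_{i=1}^N\mathcal W_1(m_i,\pi^*_{X_i})\;\le\;\sum_{i=1}^N\int d_Y(y,y')\pi^*_{X_i}(dy)K(y,dy')\;=\;N\,\mathcal W_1(\bar\eta^*_N,\bar\eta_N).
\end{align*}

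Applying $C$-monotonicity to $(X_i,m_i)_{i=1}^N$ and then the Lipschitz bound \eqref{def:Lip W 1} yields
\begin{align*}
\tfrac1N\sum_{i=1}^N C(X_i,\pi_{X_i})\;\le\;\tfrac1N\sum_{i=1}^N C(X_i,m_i)\;\le\;\tfrac1N\sum_{i=1}^N C(X_i,\pi^*_{X_i})+L\,\mathcal W_1(\bar\eta^*_N,\bar\eta_N).
\end{align*}
Both $x\mapsto C(x,\pi_x)$ and $x\mapsto C(x,\pi^*_x)$ are in $L^1(\mu)$ (the latter integrates to $V(\mu,\nu)$; the former by the Lipschitz bound relative to some fixed $p_0\in\mathcal P_{d_Y}^1(Y)$), so sending $N\to\infty$ and invoking the strong law of large numbers on both sides contradicts $\int C(x,\pi_x)\mu(dx)>\int C(x,\pi^*_x)\mu(dx)$. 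The main subtlety is the $\mathcal W_1$-convergence of the empirical measures, which is precisely where the first-moment assumption on $\nu$ and the $\mathcal W_1$-Lipschitz hypothesis on $C$ interact.
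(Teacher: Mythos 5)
Your argument is correct, but it follows a genuinely different route from the paper. The paper's proof is dual: it applies a Lipschitz-restricted duality (Lemma~\ref{lem:lipschitzduality}, built on Theorem~\ref{th:kantorovichduality}) to the finite sub-problems $\frac{1}{|G|}\sum_{x_i\in G}\delta_{x_i}\otimes\pi_{x_i}$, which are optimal between their marginals precisely by $C$-monotonicity, and then glues the resulting $L$-Lipschitz potentials via Arzel\`a--Ascoli, Tychonoff and the finite intersection property into a single dual optimizer $\phi$ with $\pi_x(\phi)+C(x,\pi_x)=R_C\phi(x)$ on $\Gamma$, from which optimality follows. You instead give a purely primal argument: sample $X_i\sim\mu$ i.i.d.\ in $\Gamma$, push the optimizer's kernels $\pi^*_{X_i}$ through the disintegration kernel of an optimal $\mathcal W_1$-coupling of $\bar\eta^*_N$ and $\bar\eta_N$ to manufacture admissible competitors $m_i$ with $\sum_i m_i=\sum_i\pi_{X_i}$ (note $\pi^*_{X_i}\ll\bar\eta^*_N$, so the kernel is defined where needed), and control the error by $L\,\mathcal W_1(\bar\eta^*_N,\bar\eta_N)\to 0$, which follows from the law of large numbers plus convergence of first moments; this is where $\nu\in\mathcal P^1_{d_Y}(Y)$ and \eqref{def:Lip W 1} enter. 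Your approach is more elementary (no duality at all; even the appeal to Theorem~\ref{thm existence marginals} could be replaced by an $\epsilon$-optimal $\pi^\epsilon$, so lower semicontinuity and convexity are barely used -- convexity also disposes of possibly repeated sample points), whereas the paper's approach produces an explicit Lipschitz dual optimizer, which is of independent interest. Two small points to tighten: the integrability of $x\mapsto C(x,\pi_x)$ should be argued by comparing with $\pi^*_x$, using $\int\mathcal W_1(\pi_x,\pi^*_x)\,\mu(dx)\le 2\int d_Y(y,y_0)\,\nu(dy)<\infty$, rather than with a fixed $p_0$ (nothing guarantees $\int C(x,p_0)\,\mu(dx)<\infty$); alternatively, since $C$ is bounded below, the strong law applies even with infinite mean and still yields the contradiction. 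Also, the a.s.\ weak convergence of $\bar\eta_N=\frac1N\sum_i\pi_{X_i}$ is not literally Varadarajan's theorem (these are averages of random measures, not empirical measures of points), but the same proof -- the strong law along a countable convergence-determining class, plus the strong law for the first moments -- gives exactly the claimed $\mathcal W_1$-convergence.
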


In the proof we will use the following auxiliary result, which we will establish subsequently:

\begin{lemma}\label{lem:lipschitzduality}
	Let $\nu\in\mathcal P_{d_Y}^1(Y)$. Assume that $C\colon X\times \mathcal P_{d_Y}^1(Y)\rightarrow \mathbb{R}$ satisfies Condition~\textsc{(A+)} and is $\mathcal W_1$-Lipschitz in the sense of \eqref{def:Lip W 1}. Then
	\begin{align} \label{eq:lipschitzonly}
		\inf_{\pi \in \Pi(\mu,\nu)}\int C(x,\pi_x)\,\mu( dx) &=
		\sup_{\substack{\phi\in \Phi_{b,1}\\ \lVert \phi\rVert_{Lip}\leq L}} 
		\mu(R_C \phi) -\nu(\phi),
	\end{align}
	where $R_C\phi$ is defined as in~\eqref{def:RCphi}.
\end{lemma}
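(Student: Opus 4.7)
Only the direction ``$\leq$'' is nontrivial: ``$\geq$'' is automatic since the right-hand supremum is taken over a smaller class of test functions, and Theorem~\ref{thm duality strong} (applied with $t=1$) provides the unrestricted duality as an upper bound. The plan is to show that for every $\phi\in\Phi_{b,1}$ the $L$-\emph{inf-convolution}
\[
\tilde\phi(y):=\inf_{z\in Y}\bigl[\phi(z)+L\,d_Y(y,z)\bigr]
\]
is $L$-Lipschitz, still lies in $\Phi_{b,1}$, satisfies $\tilde\phi\leq\phi$, and crucially $R_C\tilde\phi=R_C\phi$. Granted these, $\nu(\tilde\phi)\leq\nu(\phi)$ and the equality $R_C\tilde\phi=R_C\phi$ yield $\mu(R_C\tilde\phi)-\nu(\tilde\phi)\geq \mu(R_C\phi)-\nu(\phi)$, and passing to suprema proves \eqref{eq:lipschitzonly}.

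That $\tilde\phi$ is $L$-Lipschitz, bounded below (by $\inf\phi$), continuous, and at most $L$-linearly growing (taking $z=y_0$ in the infimum) is standard, so $\tilde\phi\in\Phi_{b,1}$; also $R_C\tilde\phi\leq R_C\phi$ follows from $\tilde\phi\leq\phi$. For the nontrivial reverse inequality $R_C\phi(x)\leq R_C\tilde\phi(x)$, fix $x\in X$, $p\in\mathcal P^1_{d_Y}(Y)$ and $\eps>0$; by Jankov--von Neumann, pick a universally measurable $z_\eps\colon Y\to Y$ with
\[
\phi\bigl(z_\eps(y)\bigr)+L\,d_Y\bigl(y,z_\eps(y)\bigr)\leq \tilde\phi(y)+\eps.
\]
The push-forward $q:=(z_\eps)_\# p\in \mathcal P^1_{d_Y}(Y)$ is coupled to $p$ via $(\id,z_\eps)_\# p$, so $\mathcal W_1(p,q)\leq \int d_Y(y,z_\eps(y))\,p(dy)$, and hypothesis \eqref{def:Lip W 1} yields $C(x,q)\leq C(x,p)+L\,\mathcal W_1(p,q)$. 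Combining with $q(\phi)=p(\phi\circ z_\eps)\leq p(\tilde\phi)-L\int d_Y(y,z_\eps(y))\,p(dy)+\eps$ gives
\[
q(\phi)+C(x,q)\leq p(\tilde\phi)+C(x,p)+\eps,
\]
and infimizing in $p$, then sending $\eps\to 0$, proves $R_C\phi(x)\leq R_C\tilde\phi(x)$.

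The point I expect to require the most care is an exact cancellation: the term $L\int d_Y(y,z_\eps(y))\,p(dy)$ arising when one bounds $q(\phi)$ in terms of $p(\tilde\phi)$ is offset precisely by the $L\,\mathcal W_1(p,q)$ incurred when bounding $C(x,q)-C(x,p)$. This is the only place the $\mathcal W_1$-Lipschitz hypothesis on $C$ enters, and it is what forces the Lipschitz constant of the reduced dual functions to be exactly the Lipschitz constant $L$ of $C(x,\cdot)$. Everything else is routine: standard inf-convolution estimates, the Kantorovich--Rubinstein coupling bound, and a measurable-selection step, with continuity of $\phi\in\Phi_{b,1}$ ensuring that the selector $z_\eps$ can be chosen universally measurable.
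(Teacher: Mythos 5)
Your argument is correct, and it reaches \eqref{eq:lipschitzonly} by a genuinely more direct route than the paper. The paper does not prove $R_C\tilde\phi=R_C\phi$ for an arbitrary $\phi\in\Phi_{b,1}$; instead it works with a maximizing dual sequence $(\phi_k)$ and a primal optimizer, first modifies each $\phi_k$ so that it attains its infimum, introduces the contact set $A_k=\{y:\phi_k(y)=\inf_z[\phi_k(z)+Ld_Y(y,z)]\}$, proves by an iterative Cauchy/completeness argument that every point outside $A_k$ admits a ``descent'' onto $A_k$ with slope exceeding $L$ (the identity $\proj_1(\mathcal Y_k)=A_k^c$), and then uses two measurable selections to show that the infima defining $R_C\phi_k$ and $R_C\psi_k$ (with $\psi_k$ the inf-convolution) may both be restricted to measures carried by $A_k$, where $\phi_k=\psi_k$; this yields $R_C\phi_k=R_C\psi_k$ and hence that $(\psi_k)$ is again maximizing. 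Your version replaces exact contact points by $\varepsilon$-optimal points of the inf-convolution: the selector $z_\varepsilon$ and the exact cancellation of $L\int d_Y(y,z_\varepsilon(y))\,p(dy)$ against the $L\,\mathcal W_1(p,(z_\varepsilon)_\#p)$ penalty in \eqref{def:Lip W 1} give $R_C\phi\le R_C\tilde\phi$ directly, so you never need the contact set to be nonempty, the infimum-attainment modification, or the descent argument -- this is the same mechanism the paper uses in the restriction steps (and in the proof of Theorem~\ref{th:compactsufficient}), but deployed once and globally, which makes the proof shorter; what the paper's route buys in addition (near-optimal dual functions whose $R_C$-minimizers concentrate on the contact set) is not needed for the statement. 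Two routine points you should spell out: the selection set $\{(y,z):\phi(z)+Ld_Y(y,z)\le\tilde\phi(y)+\varepsilon\}$ is closed (both $\phi$ and the $L$-Lipschitz $\tilde\phi$ are continuous), so Jankov--von Neumann indeed applies; and $q=(z_\varepsilon)_\#p\in\mathcal P_{d_Y}^1(Y)$ because, for $L>0$, $Ld_Y(y,z_\varepsilon(y))\le\tilde\phi(y)-\phi(z_\varepsilon(y))+\varepsilon\le\phi(y)-\ell+\varepsilon$ is $p$-integrable by the linear growth bound on $\phi$, while the degenerate case $L=0$ (where $C(x,\cdot)$ is constant and both sides are trivially $\mu(C)$) is handled by a one-line remark.
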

 
\begin{proof}[Proof of Theorem~\ref{th:compactsufficient}]
Let $\pi$ be $C$-monotone. There is an increasing sequence $(K_n)_{n\in\N}$ of compact sets on $Y$ such that $\nu(K_n)\nearrow 1$. From this we can refine the $\mu$-full measurable set $\Gamma$ in the definition of $C$-monotonicity, see Definition~\ref{def:C-monotonicity}, so that for each $x\in\Gamma$ we have $\lim_n \pi_x(K_n) = 1$ and $\pi_x \in \mathcal P^1_{d_Y}(Y)$. Our goal is to construct a dual optimizer $\phi \in \Phi_{1}$ to $\pi$ such that $$\pi_x(\phi) + C(x,\pi_x) - R_C\phi(x) = 0\quad \forall x\in \Gamma.$$
When this is achieved, Theorem \ref{th:kantorovichduality} and the following arguments show that $\pi$ is optimal as desired:
	\begin{align*}
		\int_X C(x,\pi_x)\mu({d}x)&= \int_\Gamma C(x,\pi_x)\mu({d}x)
		= \int_\Gamma [R_C(\phi)(x) -\pi_x(\phi)]\mu({d}x) \\
		&\leq \liminf_{k\rightarrow -\infty} \int_X [R_C(\phi\vee k)(x) - \pi_x(\phi\vee k)]\mu(dx)\\ &\leq \sup_{\phi\in \Phi_{b,1}} \mu( R_C\phi) - \nu(\phi)
		\\& \leq \inf_{\tilde{\pi}\in\Pi(\mu,\nu)} \int_X C(x,\tilde{\pi}_x)\mu({d}x),
	\end{align*}
where we used that 
$$\liminf_{k\rightarrow -\infty} R_C(\phi\vee k)(x) = \inf_{k \leq 0} R_C(\phi\vee k)(x) = R_C\phi(x)\quad \forall x\in X.$$

Let us prove the existence of a dual optimizer in $\Phi_1$. Let $G\subseteq \Gamma$ be a finite subset. By definition of $C$-monotonicity, we conclude that the coupling $\frac{1}{|G|}\sum_{x_i\in G}\delta_{x_i}(dx)\pi_{x_i}(dy)$ is optimal for the weak transport problem determined by the cost $C$ and its first and second marginals. We can apply Lemma~\ref{lem:lipschitzduality} in this context and obtain
	\begin{gather}\label{eq:equivalence1}
		\inf\limits_{\lVert \phi\rVert_{Lip}\leq L} \sum_{x\in G} \pi_x(\phi) + C(x,\pi_x) - R_C\phi(x) = 0.
	\end{gather}
	We fix $y_0\in K_1 $ and, without loss of generality, find a maximizing sequence $(\phi_k)_{k\in\N}$ of \eqref{eq:equivalence1} such that for all $k\in\N$ the function $\phi_k$ is $L$-Lipschitz and $\phi_k(y_0) = 0$. Note that for all $x\in G$
	$$\pi_x(\phi_k) + C(x,\pi_x) - R_C\phi_k(x) \rightarrow 0,$$
	since by definition $\pi_x(\phi_k) + C(x,\pi_x) - R_C\phi_k(x) \geq 0$. By the Arzel\`a-Ascoli theorem we find for any $n\in\N$ a subsequence of $(\phi_k)_{k\in \N}$ and a $L$-Lipschitz continuous function $\psi_n$ on $K_n$ such that
	$$\lim_j \phi_{k_j}(y) = \psi_n(y)\quad \forall y \in K_n.$$
	Thus by a diagonalization argument we can assume without loss of generality that the maximizing sequence converges uniformly for every $K_n$ to a given $L$-Lipschitz function $\tilde \psi$ defined on  $$A := \bigcup_n K_n. $$  We can extend $\tilde \psi$ from $A$ to all of $Y$, obtaining an everywhere $L$-Lipschitz function, via
	\begin{align}\label{eq:natural extension}
	\psi(y) = \inf_{z\in A} \tilde \psi(z) + Ld_Y(z,y).
	\end{align}
	{\color{black}From \eqref{eq:natural extension} we find $R_C\psi(x) = \inf_{p\in \mathcal P_{d_Y}^1(A)} p(\psi) + C(x,p)$. Indeed, by \cite[Proposition 7.50]{BeSh78} there is for any $\epsilon > 0$ an analytically measurable function $T_\epsilon\colon Y\to A$ with
	$$\tilde \psi(T_\epsilon(y)) + Ld_Y(T_\epsilon(y),y) \leq \psi(y) + \epsilon,$$
	from which {\color{black}after integrating with respect to $p$ and using the definition of the Wasserstein distance} we deduce
	$$p(\psi) - T_\epsilon(p)(\psi) + C(x,p) - C(x,T_\epsilon(p)) {\color{black}\geq -\epsilon+L\mathcal W_1(p,T_\epsilon(p)) + C(x,p) - C(x,T_\epsilon(p)) \geq -\epsilon},$$
	{\color{black}where we used \eqref{def:Lip W 1} in the last inequality.} Therefore, it is actually possible to restrict infimum in $R_C\psi(x)$ to $\mathcal P^1_{d_Y}(A)$, and we conclude
	\begin{align}\label{eq:argument 1}
	\limsup_k R_C\phi_k(x) \leq \inf_{p\in \mathcal P_{d_Y}^1(A)} p(\psi) + C(x,p) = R_C\psi(x).
	\end{align}}
	By dominated convergence, and the fact that $\pi_x\left( A \right)=1$, we have
\begin{align}\label{eq:argument 2}
\lim_k \pi_x(\phi_k) = \pi_x(\psi),
\end{align}
	which yields
\begin{align}\label{eq:argument 3}
0 = \liminf_k \pi_x(\phi_k) + C(x,\pi_x) - R_C\phi_k(x) \geq \pi_x(\psi) + C(x,\pi_x) - R_C\psi(x) \geq 0,
\end{align}
by definition of $R_C\psi(x)$.

%{\color{red}ACA QUEDE}
For $G\subset Y$ define $\Psi_G$ as the set of all $L$-Lipschitz continuous functions on $A$, vanishing at the point {\color{black}$y_0$}, and satisfying
	$$\pi_x(\psi)+C(x,\pi_x)-R_C\psi(x)  = 0\quad \forall x\in G.$$
The previous arguments show that, for each finite $G\subset \Gamma$, the set $\Psi_G$ is nonempty.	
	We now check that $\Psi_G$ is closed in the topology of pointwise convergence:
	Let $(\psi_\alpha)_{\alpha\in \mathcal I}$ be a net in $\Psi_G$ which converges pointwise to a function $\phi$ on $A$. Since $A$ is the countable union of compact sets, it is possible to extract a sequence $(\psi_{\alpha_k})_{k\in\N}$ of the net such that 
	$$\psi_{\alpha_k} \rightarrow \phi \quad \text{pointwise on  $A$ and uniformly on each $K_n$},$$
	from which $\phi$ is $L$-Lipschitz on $A$ and can be extended to an $L$-Lipschitz continuous function $\psi$ on $Y$, see \eqref{eq:natural extension}. By repeating previous arguments (see \eqref{eq:argument 1}, \eqref{eq:argument 2} and \eqref{eq:argument 3}) we obtain that $\phi\in \Psi_G$.
	
	Note that $\Psi_G$ is a closed subset of  $\prod_{y\in A} {\color{black}[-Ld(y,y_0), Ld(y,y_0)]}$ which is compact in the topology of pointwise convergence by Tychonoff's theorem. Further, the collection $\{\Psi_G:\,G\subset\Gamma,|G|<\infty\}$ satisfies the finite intersection property, since if $G_1,\dots,G_n$ are finite then $$\bigcap_{i\leq n}\Psi_{G_i}\supset \Psi_{\cup_{i\leq n}G_i}\neq \emptyset.$$
	Therefore it is possible to find $ \phi \in \bigcap_{G\subseteq \Gamma,~|G|<\infty} \Psi_G$.	Again extend $\phi$, from $A$ to $Y$, by a $L$-Lipschitz function as usual. Thus, we have found the desired dual optimizer.
	\end{proof}

\begin{proof}[Proof of Lemma~\ref{lem:lipschitzduality}] 
By Theorem~\ref{th:kantorovichduality} we have 
\begin{align}\label{eq:gozlankantorovich}
	\inf_{\pi\in \Pi(\mu,\nu)} \int_X C(x,\pi_x)\mu(dx) = \sup_{\phi\in \Phi_{b,1}} \mu(R_C\phi)-\nu(\phi).
\end{align}
By Theorem~\ref{thm existence marginals} we find a minimizer $\pi^*\in \Pi(\mu,\nu)$ of $V(\mu,\nu)$. Now we proceed by taking a maximizing sequence $(\phi_k)_{k\in\mathbb{N}}$ for the right-hand side of \eqref{eq:gozlankantorovich}. Note that we can choose each $\phi_k$, in addition to being below-bounded and continuous, in a way such that it attains its infimum, i.e., there exists $y_k \in Y$ such that
\begin{align}\label{eq:phi attains inf}
-\infty < b_k := \inf_{y\in Y} \phi_k(y) = \phi_k(y_k).
\end{align}
Indeed, this can be done by using e.g.\,$\phi_k \vee \big(b_k + \frac{1}{k}\big)$ instead. Then
$$\lim_k \nu\left(\phi_k - \phi_k \vee \Big(b_k + \frac{1}{k}\Big)\right) = 0,\quad R_C\phi_k \leq R_C\left(\phi_k \vee \Big(b_k + \frac{1}{k}\Big)\right),$$
and the following computation shows that $(\phi_k \vee (b_k + \frac{1}{k}))_{k\in\N}$ is another maximizing sequence:
\begin{align*}
	0 &= \lim_k \int_X [\pi_x^*(\phi_k) + C(x,\pi_x^*) - R_C\phi_k(x)]\mu(dx)\\
	&\geq \lim_k \int_X \Bigg[\pi_x^*\left(\phi_k \vee \Big(b_k +\frac{1}{k}\Big)\right) + C(x,\pi_x^*) - R_C\left(\phi_k \vee\Big(b_k +\frac{1}{k}\Big)\right)(x)\Bigg] \mu(dx) \geq 0.
\end{align*}

So let $\phi_k$ attain its infimum as in \eqref{eq:phi attains inf}. We want to show that we can choose the sequence to be Lipschitz with constant $L$.  For this purpose we infer additional properties of potential minimizers of $R_C\phi_k$. 
Define for each function $\phi_k$ the Borel-measurable sets
	$$A_k := \left\{y \in Y\colon \sup_{y\neq z \in Y} \frac{\phi_k(y)-\phi_k(z)}{d_Y(y,z)}\leq L\right\}\neq \emptyset,$$
	$$\mathcal Y_k := \left\{(y,z) \in Y\times A_k \colon \phi_k(y)-\phi_k(z)> Ld_Y(y,z)\right\}.$$
That $A_k\neq \emptyset $ follows since the minimizers of $\phi_k$ form a subset. We also stress that $$\proj_1(\mathcal Y_k)=A_k^c.$$ Indeed, it is apparent that $\proj_1(\mathcal Y_k) \subset A_k^c$. To see the converse, assume $y\in A_k^c \cap \proj_1(\mathcal Y_k)^c$. Define $Z(z') := \{z\in Y\colon \phi_k(z')-\phi_k(z) > Ld_Y(z,z')\}$. If there exists $ \tilde z \in Z(y)\cap A_k$, we obtain a contradiction to $y\in \proj_1(\mathcal Y_k)^c$. Let $z_0 := y$ and inductively set $z_l \in Z(z_{l-1})$ such that
\begin{align}\label{eq:sequence inequality}
	\inf_{z\in Z(z_{l-1})} \phi_k(z) + \frac{1}{2^l} \geq \phi_k(z_l).
\end{align}
We have for any natural numbers $0\leq i< n$
\begin{align}\label{eq:sequence Z condition}
\phi_k(z_i) - \phi_k(z_n) = \sum_{l=i}^n \phi_k(z_{l-1}) - \phi_k(z_l)> L\sum_{l=i}^n d_Y(z_{l-1},z_l).
\end{align}

The r.h.s.\ is bounded from below by $Ld_Y(z_i,z_n)$ and so as before we see that $z_n\in A_k$ provides a contradiction. We therefore assume for all $l$ that $z_l\notin A_k$. The above inequality yields by lower-boundedness of $\phi_k$ that $(z_l)_{l\in\N}$ is a Cauchy sequence in $Y$. Writing $\bar z$ for its limit point, we conclude from \eqref{eq:sequence Z condition} that $\phi_k(z_i) - \phi_k(\bar z) > Ld_Y(z_i,\bar z)$ and consequentely $Z(\bar z) \subset Z(z_i)$. Since then $\inf\{\phi_k(z):z\in Z(z_i)\}\leq \inf\{\phi_k(z):z\in Z(\bar z)\}$ and from \eqref{eq:sequence inequality}, we deduce $\inf\{\phi_k(z):z\in Z(\bar z)\}\geq \phi_k(\bar z)$. Thus $Z(\bar z)=\emptyset$, implying $\bar z\in A_k$ and yielding a contradiction to $y\in \proj_1(\mathcal Y_k)^c$. All in all, we have proven that $A_k^c = \proj_1(\mathcal Y_k)$.

By Jankov-von Neumann uniformization \cite[Theorem 18.1]{Ke95} there is an analytically measurable selection $T_k\colon \proj_1(\mathcal Y_k) \rightarrow A_k$.  We set $T_k$ on $A_k=\proj_1(\mathcal Y_k)^c$ as the identity. 
Then $T_k$ maps from $Y$ to $A_k$ and for any $p\in \mathcal P_{d_Y}^t(Y)$ we have
\begin{align*}
	C(x,T_k(p)) & \leq C(x,p)+L\mathcal W_1(p,T_k(p)) \\
	& \leq C(x,p)+L \int_Y d_Y(y,T_k(y))p(dy)\\
	& \leq C(x,p)+ \int_Y [\phi_k(y)-\phi_k(T_k(y))]p(dy)\\
	&= C(x,p)+p(\phi_k) - T_k(p)(\phi_k).
\end{align*}
Therefore, we can assume that potential minimizers of $R_C\phi_k$ are concentrated on $A_k$:
\begin{align}\label{eq:RCrestrict}
	R_C\phi_k(x) = \inf_{p\in\mathcal P_{d_Y}^1(Y)} p(\phi_k) + C(x,p)
	= \inf_{p\in\mathcal P_{d_Y}^1(A_k)} p(\phi_k) + C(x,p).
\end{align}
{\color{black}We introduce a family of $L$-Lipschitz continuous functions by 
	$$\psi_k(y) := \inf_{z\in A_k} \phi_k(z) + Ld_Y(y,z) = \inf_{z\in Y} \phi_k(z) + Ld_Y(y,z)\quad \forall y\in Y,$$
where equality holds thanks to $\proj_1(\mathcal Y_k)=A_k^c$, since for $z\in A_k^c$ we find $(z,\hat z) \in \mathcal Y_k$, and so
$$\phi_k(z) + Ld_Y(y,z) > \phi_k(\hat z) + L(d_Y(y,z) + d_Y(z,\hat z)) \geq \phi_k(\hat z) + Ld_Y(y,\hat z).$$}
Then $\phi_k \geq \psi_k$ where equality holds precisely on $A_k$. Similarly to before, we find a measurable selection $\hat T_k\colon Y \rightarrow A_k$ such that $\psi_k(\hat T_k(y)) + Ld_Y(y,\hat T_k(y)) \leq \psi_k(y) + \epsilon$. For any $p \in \mathcal P^t_{d_Y}(Y)$ we have
\begin{align*}
	C(x,\hat T_k(p)) \leq C(x,p) + L\int_Y d_Y(y,\hat T_k(y))p(dy) \leq C(x,p) + p(\psi_k) - \hat T_k(p)(\psi_k) + \epsilon.
\end{align*}
Since $\epsilon$ is arbitrary, by the same argument as in \eqref{eq:RCrestrict}, we can restrict $\mathcal P_{d_Y}^1(Y)$ to $\mathcal P_{d_Y}^1(A_k)$ in the definition of $R_C\psi_k$. Hence, $R_C\phi_k(x) = R_C\psi_k(x)$ and
	\begin{align*}
	\int_X C(x,\pi^*_x)  \mu (dx) &= \lim_k \int_X \left [-\pi^*_x(\phi_k)+ R_C\phi_k(x)\right ]\mu(dx)\\
	&\leq \lim_k \int_X \left [-\pi^*_x(\psi_k) + R_C\psi_k(x)\right ]\mu(dx)\\
	&\leq \lim_k\int_X \left [-\pi^*_x(\psi_k)+ \pi^*_x(\psi_k)+C(x,\pi^*_x)\right ]\mu(dx) \\ &= \int_X C(x,\pi^*_x)\mu(dx).
	\end{align*}	
\end{proof}

\section{On the Brenier-Strassen Theorem of Gozlan and Juillet}
\label{sec BS GJ}

In this part we take $X=Y=\mathbb R^d$, equipped with the Euclidean metric, and $$C_{\theta}(x,\rho):= \theta\left( x - \int y\rho(dy) \right ),$$ where $\theta:\mathbb R^d\to\mathbb R_+$ is convex. As usual we denote by $V(\cdot,\cdot)$ the value of the weak transport problem with this cost functional (see \eqref{eq weak transport def}). We have

\begin{lemma}\label{lem basic equality theta}
Let $\mu\in \mathcal P(\mathbb R^d)$ and $\nu\in \mathcal P^1(\mathbb R^d)$. Then
\begin{equation}\label{eq GJ theta}
\inf_{\eta\leq_c\nu}\inf_{\pi\in\Pi(\mu,\eta)}\int \theta(x-z)\pi( dx, dz)=V(\mu,\nu).
\end{equation}
\end{lemma}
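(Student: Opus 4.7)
I would prove the identity by establishing the two inequalities separately, using Jensen's inequality in one direction and Strassen's theorem with gluing in the other.

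\textbf{The inequality $\leq$.} Take any $\pi \in \Pi(\mu,\nu)$ with disintegration $(\pi_x)_{x\in \R^d}$ and define the barycentre map $T\colon \R^d \to \R^d$ by $T(x) := \int y\, \pi_x(dy)$, which is well-defined $\mu$-a.s.\ because $\nu \in \mathcal P^1(\R^d)$. Set $\eta := T_\#\mu$. For every convex function $f\colon \R^d \to \R$, Jensen's inequality gives
\begin{align*}
\int f\, d\eta = \int f(T(x))\, \mu(dx) \leq \int\!\!\int f(y)\, \pi_x(dy)\, \mu(dx) = \int f\, d\nu,
\end{align*}
so $\eta \leq_c \nu$. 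Consider the Monge-type plan $\pi' := (\mathrm{id}, T)_\#\mu \in \Pi(\mu,\eta)$. Then
\begin{align*}
\int \theta(x-z)\, \pi'(dx,dz) = \int \theta(x - T(x))\, \mu(dx) = \int C_\theta(x,\pi_x)\, \mu(dx).
\end{align*}
Taking the infimum over $\pi$ yields $\inf_{\eta \leq_c \nu} \inf_{\pi' \in \Pi(\mu,\eta)} \int \theta(x-z)\, \pi'(dx,dz) \leq V(\mu,\nu)$.

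\textbf{The inequality $\geq$.} Let $\eta \leq_c \nu$ and $\pi \in \Pi(\mu,\eta)$. By Strassen's theorem there exists a martingale coupling $\gamma \in \Pi(\eta,\nu)$, i.e.\ $\int y\, \gamma_z(dy) = z$ for $\eta$-a.e.\ $z$. Gluing $\pi$ and $\gamma$ produces the probability measure $\mu(dx)\,\pi_x(dz)\,\gamma_z(dy)$ on $\R^d\times\R^d\times\R^d$; let $\tilde\pi$ be its $(x,y)$-marginal. A direct computation shows $\tilde\pi \in \Pi(\mu,\nu)$ with disintegration $\tilde\pi_x(dy) = \int \gamma_z(dy)\, \pi_x(dz)$, whence by the martingale property
\begin{align*}
\int y\, \tilde\pi_x(dy) = \int\!\!\int y\, \gamma_z(dy)\, \pi_x(dz) = \int z\, \pi_x(dz).
\end{align*}
Therefore, using Jensen's inequality in the last step,
\begin{align*}
V(\mu,\nu) \leq \int C_\theta(x,\tilde\pi_x)\, \mu(dx) = \int \theta\!\left(x - \int z\, \pi_x(dz)\right) \mu(dx) \leq \int \theta(x-z)\, \pi(dx,dz).
\end{align*}
Taking the infimum over $\pi$ and $\eta$ gives the reverse inequality, completing the proof.

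\textbf{Potential obstacle.} The argument is essentially a bookkeeping exercise combining Jensen's inequality with Strassen's theorem and the standard gluing lemma; all measurability and integrability issues are mild given the hypothesis $\nu \in \mathcal P^1(\R^d)$ (which makes the barycentre map and the convex order well-defined). The one point to be a little careful with is confirming that the glued kernel $\tilde\pi_x$ has the correct barycentre $\mu$-a.s., but this follows immediately from the martingale property of $\gamma$ and Fubini applied to the non-negative (after shifting) integrand.
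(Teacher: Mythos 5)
Your proof is correct and follows essentially the same route as the paper: the barycentre map with Jensen's inequality for one inequality, and Strassen's theorem plus gluing with the martingale property and Jensen for the other. The only cosmetic difference is that you work with arbitrary $\eta\leq_c\nu$ and $\pi\in\Pi(\mu,\eta)$ and take infima at the end, whereas the paper picks $\epsilon$-optimal choices and lets $\epsilon\to0$.
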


\begin{proof}
Given $\pi$ feasible for $V(\mu,\nu)$, we define $T(x):=\int y\pi^x(dy)$ and notice that $T(\mu)\leq_c \nu$ by Jensen's inequality. From this we deduce that the l.h.s.\ of \eqref{eq GJ theta} is smaller than the r.h.s. For the reverse inequality, let $\epsilon > 0$ and say $\bar\eta\leq_c\nu$ is such that $$\inf_{\eta\leq_c\nu}\inf_{\pi\in\Pi(\mu,\eta)}\int \theta(x-z)\pi( dx, dz) + \epsilon \geq \inf_{\pi\in\Pi(\mu,\bar\eta)}\int \theta(x-z)\pi( dx, dz)\geq \int \theta(x-z)\bar\pi( dx, dz)-\epsilon,$$ for some $\bar \pi\in \Pi(\mu,\bar\eta)$. By Strassen theorem there is a martingale measure $m( dz, dy)$ with first marginal $\bar\eta$ and second marginal $\nu$. Define $\pi( dx, dy):=\int_{z}\bar \pi^{z}( dx)m^{z}( dy)\bar\eta( dz)$, so then $\pi$ has x-marginal $\mu$ and y-marginal $\nu$, and furthermore $\int y\pi^ x( dx)=\int z \bar\pi^x( dx)$ ($\mu$-a.s.), by the martingale property of $m$. Thus, by Jensen's inequality:
$$\int \theta(x-z) \bar\pi_x(dz)\mu(dx)\geq \int\theta\left(x-\int z \bar\pi_x(dz)\right)\mu(dx)=\int\theta\left(x-\int y \pi_x(dy)\right)\mu(dx)\geq V(\mu,\nu).$$
Taking $\epsilon\to 0$ we conclude.
\end{proof}

We now provide the proof of Theorem \ref{thm a la GJ}, in which case $\theta(\cdot)=|\cdot|^2$:

\begin{proof}[Proof of Theorem \ref{thm a la GJ}]
We have $V(\mu,\nu)<\infty$, since the product coupling yields a finite cost. Lemma \ref{lem basic equality theta} established the rightmost equality in \eqref{eq GJ mu star}. The existence of an optimizer $\pi$ to $V(\mu,\nu)$ follows from Theorem \ref{thm existence marginals}. By the necessary monotonicity principle (Theorem \ref{thm cyclical}) there exists a measurable set $\Gamma\subseteq X$ with $\mu(\Gamma)=1$ such that for any finite number of points $x_1,\dots,x_N$ in $\Gamma$ and measures $m^1,\dots,m^N$ in $\mathcal P(\mathbb R^d)$
	with $\sum_{i=1}^N m^i = \sum_{i=1}^N \pi^{x_i}$ the following inequality holds:
	\begin{align}\label{eq ours cm}
		\sum_{i=1}^N \left|x^i-\int y\pi^{x^i}(dy)\right |^2 \leq \sum_{i=1}^N \left|x^i-\int ym^i(dy)\right |^2.
	\end{align}
In particular, if we let  $$T(x):=\int y\pi_x(dy),$$ and $\sigma$ is any permutation, then 
\begin{align}\label{eq ours cm2}
		\sum_{i} \left|x^i-T(x^i)\right |^2 \leq \sum_{i=1}^N \left|x^i-T(x^{\sigma(i)})\right |^2.
	\end{align}
	Let us introduce $p(dx,dz):=\mu(dx)\delta_{T(x)}(dz)$ and observe that its $z$-marginal is $T(\mu)$. By Rockafellar's theorem (\cite[Theorem 2.27]{Vi03}) the support of $p$ is contained in the graph of the subdifferential of a closed convex function. Then by the Knott-Smith optimality criterion (\cite[Theorem 2.12]{Vi03}) the coupling $p$ attains $\mathcal W_2(\mu,T(\mu))$. Since by Jensen clearly $T(\mu)\leq_c\nu$, this establishes the remaining equality in \eqref{eq GJ mu star} and shows further that $V(\mu,\nu)=\mathcal W_2(\mu,T(\mu))^2$ and $\mu^*:=T(\mu)$. The uniqueness of $\mu^*$ follows the same argument as in the proof of \cite[Proposition 1.1]{GoJu18}. 
	
	We can use \eqref{eq ours cm} and argue verbatim as in \cite[Remark 3.1]{GoJu18} showing that $T$ is actually 1-Lipschitz on $\Gamma$. We will now prove that $T$ is ($\mu$-a.s.\ equal to) the gradient of a continuously differentiable convex function. The key remark is that the coupling $p$ is also optimal for $V(\mu,T(\mu))$. Indeed, we have
	$$V(\mu,\nu)\leq \inf_{\eta\leq_c T(\mu)}\mathcal W_2(\mu,\nu)^2 = V(\mu,T(\mu))\leq \int |x-T(x)|^2\mu(dx)=V(\mu,\nu). $$ 
{\color{black}Take any $\mathcal W_2$-approximative sequence $(\mu^k)_{k\in\mathbb N}$ of $\mu$ such that for all $k\in\N$
$$\mu^k \ll \lambda \ll \mu^k,$$
where $\lambda$ denotes the $d$-dimensional Lebesgue measure. This can be easily achieved by scaled convolution with a non-degenerate Gaussian kernel. By stability of the considered weak transport problem \cite[Theorem 1.5]{BaBePa19}, and using the previously shown, we obtain for each $\mu^k$ a 1-Lipschitz map $T^k$ defined this time everywhere in $\R^d$ with
$$\mathcal W_2(\mu^k,T^k(\mu^k))^2 = V(\mu^k,\nu),$$
and $T^k(\mu^k) \to T(\mu)$ in $\mathcal W_1$.
By Brenier's theorem \cite[Theorem 2.12 $(ii)$]{Vi03} we find for each $k\in\N$ some convex function $\phi^k\colon\R^d \to \R$, $\phi(0) = 0$, and $\nabla \phi^k(x) = T^k(x)$ $\lambda$-a.e. $x$. By continuity of $T^k$ we have $\nabla\phi^k(x) = T^k(x)$ for all $x\in \R^d$.

We want to show that $(\phi^k)_{k\in\N}$ is suitably relatively compact. {\color{black}By tightness of $\mu^k$ and $T^k(\mu^k)$ we find compact sets $K_1,K_2\subset\R^d$ with
$$\quad \inf_k \mu^k(K_1) > \frac{1}{2},\quad \inf_k T^k(\mu^k)(K_2) > \frac{1}{2}.$$
In particular, the sets $(T^k(K_1) \cap K_2)_{k\in\N}$ are all non-empty. The compactness of $K_1$ and $K_2$, and the 1-Lipschitz property of each $T^k$, imply then the existence of $x\in K_1$ such that $\sup_k |T^k(x)| < \infty$.} Hence, $(T^k)_{k\in\N}$ is pointwise bounded and uniformly 1-Lipschitz. Thanks to Arzel\`a-Ascoli's theorem and a diagonalization argument, we can select a subsequence $(T^{k_j})_{j\in\N}$ of $(T^k)_{k\in\N}$ which converges locally uniformly to some 1-Lipschitz function $\tilde T\colon\R^d\to\R^d$. Since being a gradient field is preserved under locally uniform limits, we have that $\tilde T$ is a gradient field, and $\phi^{k_j}$ converges pointwise to some $\phi$ with $\phi(0) = 0$ and $\nabla \phi = \tilde T$. In particular $\phi$ is convex and of class $C^1(\R^d)$. 

Finally, for any $f \in C_b(\R^d)$ and $\epsilon > 0$, we find an index $j_0 \in \N$ such that for all $j \geq j_0$:
\begin{align*}
	|T^{k_j}(\mu^{k_j})(f) - \tilde T(\mu)(f)| &\leq 
	|T^{k_j}(\mu^{k_j})(f) - \tilde T(\mu^{k_j})(f)| + |\tilde T(\mu^{k_j})(f) - \tilde T(\mu)(f)| < \epsilon,
\end{align*}
where the first summand can be chosen sufficiently small for large $j$ by locally uniform convergence of $T^{k_k}$ to $\tilde T$ and the second one by weak convergence of $\mu^{k_j}$ to $\mu$. All in all, we deduce that $T^{k_j}(\mu^{k_j})$ converges weakly to $\tilde T(\mu)$, which must therefore match $T(\mu)$. Furthermore, $\mu(dx)\delta_{\tilde T(x)}(dy)$ defines an optimizer for the weak transport problem \eqref{eq weak transport def} between $\mu$ and $\nu$ with cost \eqref{GJcost}. By uniqueness of the optimizers we conclude $T = \tilde T$ $\mu$-almost surely. In particular, $T$ is $\mu$-almost everywhere the gradient of the convex function $\phi\in C^1(\R^d)$.
}
\end{proof}

\bibliography{joint_biblio}{}
\bibliographystyle{abbrv}
\end{document}